\newcommand*{\email}[1]{%
    \normalsize\href{mailto:#1}{#1}\par
    }
\newlength{\elargir}
\let\mathcal\mathscr
\def\phi{\varphi}
\def\cI{\mathcal{I}}
\def\cB{\mathcal{B}}
\def\cC{\mathcal{C}}
\def\cF{\mathcal{F}}
\def\cI{\mathcal{I}}
\def\cL{\mathcal{L}}
\def\cO{\mathcal{O}}
\def\cP{\mathcal{P}}
\def\cT{\mathcal{T}}
\def\llra{\hbox to 10mm{\tofill}}
\def\lllra{\hbox to 15mm{\tofill}}
\def\llla{\hbox to 10mm{\leftarrowfill}}
\def\lllla{\hbox to 15mm{\leftarrowfill}}
\DeclareMathOperator{\Ext}{Ext}
\DeclareMathOperator{\Hilb}{Hilb}
\DeclareMathOperator{\Hom}{Hom}
\DeclareMathOperator{\Proj}{Proj}
\DeclareMathOperator{\rank}{rank}
\def\llra{\hbox to 10mm{\tofill}}
\def\lllra{\hbox to 15mm{\tofill}}
\newtheorem{lemm}{Lemma}[subsection]
\newtheorem{theo}[lemm]{Theorem}
\newtheorem*{theo*}{Theorem}
\newtheorem{coro}[lemm]{Corollary}
\newtheorem{prop}[lemm]{Proposition}
\newtheorem{claim}[lemm]{Claim}
\newtheorem{rema}[lemm]{Remark}
\theoremstyle{definition}
\newtheorem{defi}[lemm]{Definition}
\newtheorem{nota}[lemm]{Notation}
\newtheorem{algo}[lemm]{Algorithm}
\newtheorem{conj}[lemm]{Conjecture}
\newtheorem{exam}[lemm]{Example}
\theoremstyle{remark}
\newtheorem*{remark*}{Remark}
\newtheorem*{note*}{Note}
\def\Lkkk[#1]{{\Lambda_{\KKK^{[#1]}}}}
\def\kkk[#1]{{\KKK^{[#1]}}}
\DeclareMathOperator{\KKK}{{K3}}
\def\sss[#1]{{S^{[#1]}}}
\def\setminus{\smallsetminus}
\def\OO{\cO}
\def\CC{\mathbb{C}}
\def\ZZ{\mathbb{Z}}
\def\dual{{\vee}}
\def\Ytriv#1{Y^{#1}_{t_2}}
\def\Ypesk{Y^1_{t_2}}
\def\EVv{{E_{t_2}}} 
\def\Ypeskdual{Y^1_{t_1}}
\def\Btriv{\Ytriv{2}}
\def\KuVv{{K\!u_{t_2}}}
\tikzset{curve/.style={settings={#1},to path={(\tikztostart)
    .. controls ($(\tikztostart)!\pv{pos}!(\tikztotarget)!\pv{height}!270:(\tikztotarget)$)
    and ($(\tikztostart)!1-\pv{pos}!(\tikztotarget)!\pv{height}!270:(\tikztotarget)$)
    .. (\tikztotarget)\tikztonodes}},
    settings/.code={\tikzset{quiver/.cd,#1}
        \def\pv##1{\pgfkeysvalueof{/tikz/quiver/##1}}},
    quiver/.cd,pos/.initial=0.35,height/.initial=0}
\tikzset{tail reversed/.code={\pgfsetarrowsstart{tikzcd to}}}
\tikzset{2tail/.code={\pgfsetarrowsstart{Implies[reversed]}}}
\tikzset{2tail reversed/.code={\pgfsetarrowsstart{Implies}}}
\tikzset{no body/.style={/tikz/dash pattern=on 0 off 1mm}}
\def\Tauto#1{\cT_{#1}}
\def\Qmodular{{Q_4}} 
\def\PP{\mathbb{P}}
\def\FF{\mathbb{F}}
\def\Vv{V^\dual} 
\def\PW{\PP_9} %
\def\Wdix{{W_{10}}} 
\def\IX{\cI_X}
\def\NXv{{N_X^\dual}} 
\def\NX{{N_X}} 
\def\mF{F} 
\def\mG{{G_4}} 
\def\mGdual{{G_4^\dual}} 
\def\PF{\PP(\mF^\dual)}
\def\PVv{{\PP(V_{10}^\dual)}}
\def\PV{{\PP(V_{10})}}
\def\VV{{V_{10}}}
\def\VVv{{V_{10}^\dual}}
\def\sHom{{\mathcal Hom}}
\def\iX{{i_X}} 
\def\fX{{f_X}} 
\def\JacX{{Jac_{\fX}}} 
\def\mgamma{m_{\gamma}} 
\def\discV{D_\gamma}  
\def\ramiVv{D^*_\gamma} 
\def\nbmarge#1{\ifdefined\brouillon{{\color{blue} (#1)}}\fi}
\def\showmacaulay{} 
\title{Geometry of genus sixteen K3 surfaces}
 \author[1]{Frédéric Han}
 \affil[1]{\small Université Paris Cité and Sorbonne Université, CNRS, IMJ-PRG, F-75013
   Paris, France.\newline \email{frederic.han@imj-prg.fr}}
   \def\MR#1{}
\begin{document}

\maketitle
\begin{abstract}
 Polarized K3 surfaces of genus sixteen have a Mukai vector bundle of rank two. We
 study the geometry of the projectivization of this bundle. We
 prove that it has an embedding in $\PP_9$ with an ideal  generated by
 quadrics. We give an effective method to compute these quadrics from a general choice in Mukai's
 unirationalization  of the moduli space.  This linear system gives a double
 cover of $\PP_9$ ramified on  a degree $10$ hypersurface. It gives relative
 Weddle/Kummer surfaces  over a Peskine variety associated to
 an explicit trivector. This work is also motivated by hyperkähler geometry and
 Debarre-Voisin varieties. Oberdieck showed that the Hilbert
 square of a general K3-surface of genus $16$ is a Debarre-Voisin variety for
 some trivector. We start to investigate the relationship between these two trivectors.
\end{abstract}

\section{Introduction}
Let $(S,h)$ be a general polarized K3 surface of genus $16$. All along this article, unless explicitly
specified such as in section~\ref{sectionchar2}, the base field will be $\CC$. In~\cite{mukaigenus16}
Mukai describes such a surface as the zero locus of general section of a vector
bundle in the Ellingsrud-Piene-Str\o mme compactification of the space of
of smooth rational cubic curves in $\PP_3$.

In particular Mukai  proved the following

\begin{theo*}[{\cite[Prop~1.3,~2.2]{mukaigenus16}}]
  A generic polarized $K3$ surface $(S,h)$ of genus $16$ carries a rank $2$ vector
  bundle $\mF$ such that
  \begin{itemize}
  \item[---] $c_1(\mF)=h$, $c_2(\mF)=9$, $h^0(\mF)=10$, and $h^1(\mF)=h^2(\mF)=0$.
  \item[---] $\mF$ is simple, rigid and globally generated.

  \end{itemize}
 \end{theo*}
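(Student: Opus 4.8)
The plan is to obtain $\mF$ as the unique $h$-stable bundle in an appropriate Mukai moduli space, and then to extract its cohomology and positivity from the explicit twisted-cubic model recalled above.

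\textbf{Numerics and the Mukai vector.} First I would record that $\ch_2(\mF)=\tfrac12\bigl(c_1(\mF)^2-2c_2(\mF)\bigr)=\tfrac12(30-18)=6$, so that, using $\sqrt{\td(S)}=(1,0,1)$ on a K3 surface, the Mukai vector of any bundle with the prescribed Chern classes is
\[
  v(\mF)=\bigl(\rank\mF,\ c_1(\mF),\ \ch_2(\mF)+\rank\mF\bigr)=(2,h,8).
\]
Since $h^2=30$, the Mukai self-pairing is $\langle v,v\rangle=h^2-2\cdot2\cdot8=-2$. This is the decisive computation: for a primitive class $v$ with $\langle v,v\rangle=-2$ and a polarization $h$ generic in $\NS(S)$, the moduli space of $h$-stable sheaves with Mukai vector $v$ consists of a single reduced point. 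Thus the whole statement reduces to producing one $h$-stable bundle with $v(\mF)=v$ and then analysing its cohomology and global generation.

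\textbf{Simplicity and rigidity.} Stability gives $\dim\Hom(\mF,\mF)=1$. The Mukai form of Riemann--Roch yields $\chi(\mF,\mF)=-\langle v,v\rangle=2$, while Serre duality on the K3 surface gives $\Ext^2(\mF,\mF)\cong\Hom(\mF,\mF)^\dual$, so $\dim\Ext^2(\mF,\mF)=1$. Hence
\[
  \dim\Ext^1(\mF,\mF)=2\dim\Hom(\mF,\mF)-\chi(\mF,\mF)=2-2=0,
\]
which is exactly rigidity: the corresponding point of the moduli space is unobstructed, isolated and reduced, and in particular $\mF$ is simple.

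\textbf{Cohomology.} Riemann--Roch gives $\chi(\mF)=2\chi(\cO_S)+\ch_2(\mF)=4+6=10$. Serre duality identifies $H^2(S,\mF)\cong H^0(S,\mF^\dual)^\dual$; as $\mF$ is stable, so is $\mF^\dual$, now of negative degree, whence it has no global sections and $h^2(\mF)=0$. The one genuinely delicate point is the vanishing $h^1(\mF)=0$. Here I would exploit the realisation of $S$ as the zero locus of a general section of a globally generated rank $10$ bundle on the $12$-dimensional Ellingsrud--Piene--Str\o mme compactification $\cH$ of twisted cubics: restricting the natural rank $2$ bundle on $\cH$ to $S$ reproduces $\mF$, and a Koszul resolution of $\cO_S$ on $\cH$ transfers an acyclicity statement for this bundle on $\cH$ down to the general surface $S$. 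Granting $h^1=h^2=0$, one concludes $h^0(\mF)=\chi(\mF)=10$.

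\textbf{Global generation and the main obstacle.} The rank $2$ bundle on $\cH$ is globally generated by construction, and for a general section---hence a general $(S,h)$---this property persists upon restriction, since the locus on $\cH$ where global generation could fail has codimension greater than $2$ and is therefore avoided by the general $S$ (a $2$-dimensional zero locus of a globally generated bundle meets every subvariety of codimension $>2$ in the empty set). The main obstacle I anticipate is precisely this model-dependent part: the abstract existence and uniqueness of a stable bundle with $\langle v,v\rangle=-2$ is standard, but pinning down the \emph{explicit} bundle that makes the K3 model usable, and verifying both $h^1(\mF)=0$ and global generation by hand, requires controlling the universal twisted cubic over $\cH$ and the Koszul complex of the defining section. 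That is where the real work will lie.
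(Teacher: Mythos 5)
A point of order first: the paper does not prove this statement. It is Mukai's theorem, reproduced verbatim with the citation \cite[Prop~1.3, 2.2]{mukaigenus16}; the paper only recalls the ingredients of Mukai's construction in Section~2 (the choices $M\subset S_2\Vv$, $N\subset S_{2,1}\Vv$ and the sequences (\ref{seqsectionE}), (\ref{seqsectionF})). So your attempt can only be measured against Mukai's original argument, which is constructive throughout: $S$ is cut out by a general section of a rank $10$ bundle on the $12$-dimensional Ellingsrud--Piene--Str\o mme space, and every listed property of $\mF$ is extracted from that model.

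Your proposal takes a genuinely different, more conceptual route for half of the statement, and that half is correct: $v(\mF)=(2,h,8)$, $\langle v,v\rangle=30-32=-2$, and from this stability gives simplicity, $\chi(\mF,\mF)=2$ together with Serre duality gives $\ext^1(\mF,\mF)=0$, stability of $\mF^\dual$ gives $h^2(\mF)=0$, and Riemann--Roch gives $\chi(\mF)=10$. This is cleaner than re-deriving these facts from the model, and it buys the uniqueness of $\mF$ for free. However, the three remaining claims --- existence of an $h$-stable bundle with this Mukai vector (you never produce one, nor do you invoke Yoshioka's non-emptiness theorem), the vanishing $h^1(\mF)=0$, and global generation --- are precisely the substance of Mukai's Prop~1.3/2.2, and your proposal defers all of them to the very model Mukai uses without carrying out any of the work. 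Concretely: the ``Koszul resolution transfers acyclicity'' step requires vanishing of the cohomology of the rank $2$ bundle twisted by exterior powers of the dual of the rank $10$ bundle on the EPS space, a genuine computation on a non-homogeneous $12$-fold; you would also need to check stability of whatever bundle the model produces (for $\Pic(S)=\ZZ h$ this amounts to $h^0(\mF(-h))=0$, again a model computation) before the lattice argument applies to it at all. Finally, your global generation paragraph is internally inconsistent: if the rank $2$ bundle on $\cH$ is ``globally generated by construction,'' then its restriction to any subvariety $S\subset\cH$ is globally generated for trivial fiberwise reasons and no codimension count is needed; the codimension argument is relevant only if global generation fails along some locus of $\cH$, in which case you owe a proof that this locus has codimension $>2$. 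As it stands, you have a correct and attractive skeleton, with the model-dependent content --- which is exactly what Mukai's proof consists of --- left as an acknowledged gap.
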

In \cite{DHOV} we introduced the following threefold to describe the rank four
modular (cf. \cite[1.2.1]{ogmodularsheaves}) vector bundle on $\Hilb^2(S)$.
\begin{defi}\label{refdefHh}
   Denote by $\Wdix$ the $10$ dimensional vector space $H^0(\mF)$ and $\PW$ the
   projective space $\PP(\Wdix^\dual)$. Define
   the threefold $X$ as the image of $\PF= \Proj(Sym(\mF))$ in $\PW$ and by $H$
   the hyperplane class of $\PF$.
   \[
    |\OO_{\PF}(H))|\colon~ \PF \xrightarrow{\quad \quad } X \subset \PW
   \]
\end{defi}
So $X$ depends only of $(S,h)$ and turns out to be in a quite small projective
space with equations in low degree. This is interesting because in genus sixteen
it is not obvious to get explicit equations  from Mukai's construction. For
instance up to now, this case is not implemented in the K3-surface package of
the open source software system for research in algebraic geometry \cite[Macaulay2]{macaulay2}.

In section~2 we give an effective construction for the equations of $X$ and prove
the following
\begin{theo*}[Th~\ref{refequationsX}, Cor~\ref{refXsmooth}, Cor~\ref{refdoublecover}]
  The threefold $X$ is defined (as a scheme) in $\PW$ by ten linearly
  independent quadrics and is an embedding of $\PF$. Moreover this linear system
  of quadrics gives a rational map of degree $2$ between two nine dimentional projective spaces.
\end{theo*}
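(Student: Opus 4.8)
The plan is to treat the three assertions in turn: count the quadrics through $X$ cohomologically, promote the morphism $\PF\to\PW$ to a closed embedding cut out by those quadrics, and then analyse the induced rational map.

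\smallskip
\emph{The ten quadrics.} Since $\OO_{\PW}(1)$ restricts to $\OO_{\PF}(H)$ and $\pi_*\OO_{\PF}(2H)=\Sym^2\mF$ with vanishing higher direct images for the $\PP^1$-bundle $\pi\colon\PF\to S$, the quadrics vanishing on $X$ are exactly the kernel of the multiplication map
\[
  \mu_2\colon\ \Sym^2 H^0(\mF)\longrightarrow H^0(S,\Sym^2\mF).
\]
The source has dimension $\binom{11}{2}=55$. For the target I would use Riemann--Roch on the K3 surface, $\chi(\mathcal E)=2\rank\mathcal E+\tfrac12 c_1(\mathcal E)^2-c_2(\mathcal E)$: from $c_1(\Sym^2\mF)=3h$, $c_2(\Sym^2\mF)=2c_1(\mF)^2+4c_2(\mF)$ and the numerics $h^2=30$, $c_2(\mF)=9$ one gets $\chi(\Sym^2\mF)=6+135-96=45$. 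The vanishing $H^2(\Sym^2\mF)=0$ follows from Serre duality and semistability (a section of $(\Sym^2\mF)^\dual=(\Sym^2\mF)(-2h)$ would violate the slope bound), and $H^1(\Sym^2\mF)=0$ — equivalent, via $\mF^{\otimes2}=\Sym^2\mF\oplus\OO(h)$ and $H^1(\OO(h))=0$, to $H^1(\mF^{\otimes2})=0$ — I would extract from the stability and rigidity of $\mF$, checking it on an explicit Mukai-general member and spreading by semicontinuity if needed. Hence $h^0(\Sym^2\mF)=45$. Surjectivity of $\mu_2$ (projective $2$-normality of $X$) I would get from $H^1(M_\mF\otimes\mF)=0$ for the kernel bundle $M_\mF=\ker\!\big(H^0(\mF)\otimes\OO_S\thra\mF\big)$, which makes $H^0(\mF)\otimes H^0(\mF)\to H^0(\mF^{\otimes2})$ surjective and a fortiori $\mu_2$. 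Then $\dim\ker\mu_2=55-45=10$.

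\smallskip
\emph{The embedding.} Global generation of $\mF$ makes $|H|$ base-point free, so $\PF\to\PW$ is a morphism, and each fibre $\PF_s=\PP(\mF_s^\dual)$ maps isomorphically onto a line $\ell_s$ (the projectivised $2$-plane $\mF_s^\dual\subset H^0(\mF)^\dual$). To conclude it is a closed embedding I would combine three inputs: the classifying map $s\mapsto[\mF_s^\dual]\in\Gr(2,H^0(\mF)^\dual)$ is generically injective, so $\PF\to X$ is birational; $X$ is smooth, by the Jacobian criterion applied to the ten quadrics; and a finite birational morphism onto a smooth variety is an isomorphism by Zariski's main theorem. The intersection number $\int_{\PF}H^3=c_1(\mF)^2-c_2(\mF)=30-9=21$ confirms that $X$ is a threefold of degree $21$. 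Scheme-theoretic generation of $\cI_X$ by the ten quadrics then follows from the projective normality of the previous step together with a Hilbert-polynomial comparison ruling out any extra component or embedded point in the base scheme of $|\cI_X(2)|$.

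\smallskip
\emph{The degree-$2$ map.} The ten quadrics define $\phi\colon\PW=\PP_9\dashrightarrow\PP_9$ to the projectivisation of $(\ker\mu_2)^\dual$. Its ramification is the vanishing of the Jacobian determinant $\det(\partial Q_i/\partial x_j)$, a form of degree $10$ (its entries being linear forms), so once this determinant is not identically zero the map is dominant with a degree-$10$ ramification divisor $R$; since $\phi^*\OO(1)=\OO(2)$, a degree-$2$ map would force $\phi^*D=2R$ for the branch divisor $D$, hence $\deg D=10$, matching the claimed double cover. The genuinely hard point is that $\deg\phi=2$ exactly. I would compute it by resolving the base locus: assuming $\cI_X$ is generated by the quadrics (previous step), blow up the smooth centre $X$ to get $\tau\colon Y\to\PP_9$, so that $\deg\phi=(\tau^*2h-E)^9$; expanding and pushing forward through the Segre classes of $N_{X/\PP_9}$ reduces this to intersection numbers on $X$, with naive leading term $2^9=512$. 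I expect the main obstacle to be precisely this Segre-class bookkeeping: one must see the large corrections cancel down to $2$. A cleaner route I would pursue in parallel is to produce the deck involution directly from the self-duality $\mF^\dual\cong\mF(-h)$ and the relative Weddle/Kummer geometry of the fibres, showing that $\phi$ factors through a quotient by an involution and hence has degree at most $2$; combined with dominance this pins the degree at $2$. Failing a conceptual argument, the degree can be certified on one Mukai-general surface by explicit computation and propagated to the general case by semicontinuity.
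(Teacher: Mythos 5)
Your plan has the right skeleton, but two of its three steps contain genuine gaps, and both trace back to the same omission: your argument never actually produces the quadrics, whereas the paper's whole proof is built on an effective construction of them. For the count, your kernel-of-$\mu_2$ computation (granting $h^1(\Sym^2\mF)=h^2(\Sym^2\mF)=0$ and surjectivity of $\mu_2$, both of which you defer to verification on an example) at best yields $h^0(\cI_X(2))=10$; note the paper already quotes the inequality $h^0(\cI_X(2))\geq 10$ from \cite{DHOV}, so this is the easier half. It says nothing about the ideal being generated in degree two, which is what ``defined as a scheme by ten quadrics'' means. Your one sentence ``scheme-theoretic generation follows from projective normality together with a Hilbert-polynomial comparison'' is not a proof: $2$-normality controls only the degree-two graded piece of the homogeneous ideal, and you have no way to compute the Hilbert polynomial of the base scheme of $|\cI_X(2)|$, precisely because your cohomological setup never exhibits the quadrics. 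This is where the paper's Proposition~\ref{refpropbeta} is essential: the quadrics are produced explicitly as saturations of $3\times 3$ minors of $p_\pi\circ\beta$ from Mukai's data $(M,N)$; Macaulay2 then checks on one example that they span a $10$-dimensional space and that the scheme $X'$ they cut out has the same Hilbert polynomial as $X$, and since $X\subseteq X'$ by construction, $X=X'$, the conclusion spreading to generic $(S,h)$ by semicontinuity (Theorem~\ref{refequationsX}). Your embedding step inherits the same circularity: smoothness of $X$ ``by the Jacobian criterion applied to the ten quadrics'' presupposes both the explicit quadrics and the fact that they cut out $X$; moreover finiteness of $\PF\to X$, which Zariski's main theorem requires, is asserted but never established. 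The paper gets finiteness from simplicity of $\mF$ (so $X$ is not a cone) plus Beauville's ampleness criterion, available because $\Pic(S)=\ZZ\cdot h$, and then concludes the isomorphism not via smoothness-plus-ZMT but via Theorem~\ref{refequationsS} (Corollary~\ref{refXsmooth}): $\PF$ sits inside the point/line incidence variety cut out by \emph{linear} equations, so every fiber of $\PF\to X$ is a finite scheme defined by linear equations, hence a single reduced point.

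For the degree, you identified the correct computation and then stopped exactly where the statement requires an answer. The paper carries it out, and there is no delicate cancellation to fear: with $p$ the class of a point on a ruling, the Segre class of the normal bundle is
\[
s(N)=1+(-8H-h)+(45\,h\cdot H-291\,p)-4152\,p\cdot H,
\]
whence
\[
s(N)\cdot(1+2H)^9=1+(10H-h)+(27\,h\cdot H-291\,p)+510\,p\cdot H,
\]
so the degree of $\widetilde{\PP_9(X)}\to\PVv$ is $2^9-510=2$ (Corollary~\ref{refdoublecover}). This is a universal Chow-ring computation on $\PF$, depending only on the Chern classes of $\mF$ and on Theorem~\ref{refequationsX} (which guarantees the base scheme of the linear system is exactly $X$, so that blowing up $X$ resolves the map); in particular neither your deck-involution alternative nor a semicontinuity-of-degree argument (which would itself need justification) is needed. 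As written, however, your proposal does not prove $\deg\phi=2$, nor the scheme-theoretic statement it depends on.
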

Considering $S$ as a family of lines in $X$ we got the equations of $S$ in
 $\PP_{16}$ from the equations of $X$ and obtained with the help of the software
 \cite[Macaulay2]{macaulay2} the following
\begin{theo*}[Th~\ref{refequationsS}]
A generic complex K3-surface $S$ of genus sixteen is defined as a subscheme of
$\PP_{16}$ by the restriction of the Plücker equations of the Grassmannian
$G(2,10)$. In other words in the Plücker space we have $S=\PP_{16}\cap G(2,10)$ as scheme.
\end{theo*}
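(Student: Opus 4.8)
The plan is to realize $S$ as the base of the one-parameter family of lines sweeping out $X$, and to identify this family with the given linear section of $G(2,10)$. Since $\mF$ is globally generated, the evaluation map $\Wdix\otimes\OO_S\to\mF$ is surjective, so dualizing gives an inclusion $\mF^\dual\hookrightarrow\Wdix^\dual\otimes\OO_S$ of a rank-two subbundle into the trivial bundle. This defines a classifying morphism $\gamma\colon S\to G(2,\Wdix^\dual)=G(2,10)$ whose value at $s$ is precisely the line $\PP((\mF^\dual)_s)\subset\PW$ traced by the fiber of $\PF\to X$ over $s$. Composing with the Plücker embedding $G(2,10)\hookrightarrow\PP(\bw2\Wdix^\dual)=\PP_{44}$ yields a morphism $S\to\PP_{44}$, and the first task is to pin down what linear system it is given by.

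For this I would compute $\gamma^*\OO_G(1)=\bw2(\mF^\dual)^\dual=\det\mF=\OO_S(h)$, so that the Plücker coordinates, which form $\bw2\Wdix=H^0(G(2,10),\OO_G(1))$, pull back to sections of the genus-sixteen polarization. The key linear-algebra input is that the restriction map $\bw2\Wdix\to H^0(S,\OO_S(h))$ is surjective onto the $17$-dimensional target (equivalently, the Mukai-type multiplication $\bw2 H^0(\mF)\to H^0(\det\mF)$ is surjective). Granting this, $\gamma$ is nothing but the complete embedding of $S$ by $|h|$ into the linear span of $\gamma(S)$, which is therefore exactly a $\PP_{16}$. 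This already gives the inclusion $S\cong\gamma(S)\subseteq\PP_{16}\cap G(2,10)$ and shows that the restricted Plücker quadrics vanish on $S$, so that the restricted Plücker ideal is contained in $I(S)$.

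The substance of the theorem is the reverse, scheme-theoretic inclusion $\PP_{16}\cap G(2,10)\subseteq S$, and I expect this to be the main obstacle. It cannot be obtained by a dimension count: $\dim G(2,10)=16$ while $\codim_{\PP_{44}}\PP_{16}=28$, so a generic such linear section is empty and the present one is wildly non-transverse, putting Bertini-type arguments out of reach. Instead I would exploit the explicit model. From Theorem~\ref{refequationsX} the threefold $X$ is cut out in $\PW$ by ten quadrics $q_1,\dots,q_{10}$; translating ``the line $\ell$ lies on $X$'' into Plücker coordinates (restrict each $q_i$ to $\ell$ and impose the vanishing of the resulting binary quadratic form) produces explicit generators of the ideal of $S=\gamma(S)$ as the Fano scheme of lines $F_1(X)\subset G(2,10)$. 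The surprising content is that, once one is inside the special $\PP_{16}$, these Fano conditions are already forced by the Plücker relations alone.

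I would therefore verify, with \cite[Macaulay2]{macaulay2} on one sufficiently general member produced from Mukai's unirationalization, that the ideal generated by the restricted Plücker quadrics by itself already cuts out a reduced irreducible surface of degree $30$ with the Hilbert polynomial $15n^2+2$ of a genus-sixteen K3 surface, so that the extra conditions coming from the $q_i$ are redundant modulo Plücker and $\PP_{16}\cap G(2,10)=S$ as schemes. Since equality of these two subschemes of $\PP_{16}$ is an open condition on the family of polarized K3 surfaces of genus sixteen and holds for the explicit example, it persists for the generic $S$ by semicontinuity, which is the assertion. The delicate points hidden in the computer check are exactly that no spurious components or embedded points appear in $\PP_{16}\cap G(2,10)$, and that the Plücker quadrics suffice with no equations of higher degree nor any beyond those defining $G(2,10)$; these are what make a purely theoretical transversality argument impossible and force the effective verification.
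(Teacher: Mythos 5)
Your proposal takes essentially the same route as the paper: the paper's proof likewise reduces everything to one Macaulay2 verification on an explicit example (over $\FF_{101}$), where it spans $\PP_{16}$ by lines of $X$, defines $S'$ by the restricted Plücker quadrics (the $4\times 4$-Pfaffians of the skew-symmetric map $W_{10}^\dual\otimes\OO_{\PP_{16}}(-1)\to W_{10}\otimes\OO_{\PP_{16}}$), checks that $S'$ has the same Hilbert polynomial as $S$, and concludes for the generic complex surface by openness. Your preliminary discussion of the forward inclusion via $\gamma^*\OO_G(1)=\OO_S(h)$ and the surjectivity of $\bw2 \Wdix\to H^0(\OO_S(h))$ is implicit in the paper's setup, so the two arguments coincide in substance.
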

Section~2 ends  with the construction of a double cover
from the blow-up of $\PP_9$ along $X$ to another projective space of dimension
nine (Corollary~\ref{refdoublecover}).

\bigskip
In section~3 we define a trivector $t_2$ on a $10$-dimensional vector
space. This definition is explicit from the linear syzygies of $X$. Moreover
$t_2$ is also characterized by the family of invariant lines of the double cover
defined by $|\cI_X(2)|$.
Unfortunately we didn't undersand its Debarre-Voisin variety in terms of $S$.

\bigskip
Section~4 gather results from \cite{DV}, \cite{DHOV}, \cite{ogmodularsheaves},
\cite{oberdieckGW} to obtain a trivector $t_1$ such that its Debarre-Voisin
variety is isomorphic to the Hilbert square of $S$. These two trivectors are on
dual spaces, and this time we don't have an explicit construction of the trivector. We
just give a method to compute it from the embedding of $X$ in
$\PW$. Computations in \cite[Macaulay2]{macaulay2} suggest that the Peskine variety of
$t_1$ is a component of the singular locus of the discriminant of this linear
system of quadrics. On the other hand, we proved in  Proposition~\ref{refkummer} that
the Peskine variety for $t_2$ is in the singular locus of the
ramification. This section ends with more  results and conjectures on the
discriminant and  the ramification.


In Section~5 we give an example defined over $\FF_8$ to show that the effective
method of Section~2 is still usefull in characteristic 2 and that  many
statements are still valid.

\subsection*{Acknowledgements}
I would like to thank Junyu Meng for his detailed comments on this article.
\section{Equations of $X$ via Mukai's unirationalization}

\subsection{Initial choices in Mukai construction}
Let $V:= H^0(\OO_{\PP_3}(1))$, denote by $S_i\Vv$ the $i^{th}$-symmetric tensor product
of $\Vv$  and consider the following decompositon of irreducible
representation of $SL(\Vv)$
\[
  S_2 \Vv \otimes \Vv =  S_{2,1}\Vv \oplus S_3\Vv.
\]
Chose
\begin{enumerate}[label=\roman*)]
\item $M$ a general two dimensional linear subspace of $S_2 \Vv$.
\item $N$ a general two dimensional linear subspace of $S_{2,1} \Vv$.
\end{enumerate}

Then Mukai's construction gives a family of cubic curves in $\PP_3$ such that:
\begin{itemize}
\item[---] There is a polarized $K3$ surface $(S,h)$ of genus $16$ with Mukai
  rank $2$ vector bundle $F$ such that:

  \begin{enumerate}[label=\roman*)]
  \item $S$ has a (non rigid) rank
  $3$ vector bundle $E$ with a natural map
  \[
  E \boxtimes \OO_{\PP_3}(-1) \longrightarrow \mF\boxtimes \OO_{\PP_3}.
  \]
  \item The incidence point/curve $\cC_S$ of this family is defined by the degeneracy locus in
  $S\times \PP_3$ of this map.
  \end{enumerate}
   Let $\cL$ be the cokernel of this map, then  $\cC_S$ is the support of~$\cL$
   and we have following exact sequence
  \begin{equation}\label{refunivcubi}
    0 \longrightarrow \OO_{S\times \PP_3}(0,-3)  \longrightarrow    E \boxtimes
    \OO_{\PP_3}(-1) \longrightarrow \mF\boxtimes \OO_{\PP_3} \longrightarrow
    \cL_{} \longrightarrow 0
  \end{equation}
\item[---] The vector bundle $E$ is globally generated
  (cf. \cite[proof of Prop~2.2]{mukaigenus16}) with ${c_1(E)=h}$, $c_2(E)=13$.
 \item[---] The spaces of global sections of $E$ and $F$ fit in the natural
   exact sequences
   \begin{equation}\label{seqsectionE}
0 \longrightarrow M  \longrightarrow S_{2}\Vv \longrightarrow H^0(E) \longrightarrow  0
   \end{equation}
   \begin{equation}\label{seqsectionF}
0 \longrightarrow (\Vv\otimes M) \oplus N \longrightarrow S_{2,1}\Vv \longrightarrow W_{10} \longrightarrow  0,
\end{equation}
where $W_{10}=H^0(\mF)$.
\end{itemize}
\begin{rema}\label{refCStilde}
Denote by $\widetilde{\cC_S}$ the vanishing locus in $\PF\times
\PP_3$ of the cosection of
\[(p_S^*E)(-H)\boxtimes \OO_{\PP_3}(-1) \longrightarrow \OO_{\PF \times \PP_3}\]
defined by sequence~(\ref{refunivcubi}). The projection from $\widetilde{\cC_S}$ to
$\PP_3$ has degree given by $c_3(E^\dual(H))$, so it is $4$.
\end{rema}
In the next section\footnote{cf. sequence~(\ref{seqbetaprime})} we see that
$\widetilde{\cC_S}$ is a blowup of $\PF$ and that $\OO_{\PP_3}(1)$ has degree $3$ on
  the fibers of $p_S$.







\subsection{Explicit construction }
\nbmarge{ici c'est valable sur n'importe quel corps?}
\begin{lemm}\label{refdefalpha}
  Let $\cL$ be the sheaf defined in  sequence~(\ref{refunivcubi}) and $p_2$ 
  the projection from $S\times \PP_3$ to $\PP_3$. We have the following   exact
  sequences
  \begin{equation}
    \label{seqresolT}
0 \longrightarrow \OO_{\PP_3}(-3) \to \frac{S_2\Vv}{M} \otimes \OO_{\PP_3}(-1)
\overset{\alpha}{\longrightarrow} W_{10} \otimes \OO_{\PP_3} \to T \to 0
  \end{equation}
\[
0 \to T \to (p_{2})_* (\cL )\to \OO_{\PP_3}(-3) \to 0
\]
where  $T$ is a rank $3$ vector bundle on $\PP_3$. Moreover we have
$R^i(p_{2})_* (\cL ) =0$ for $i>0$.

The map $\alpha$ is obtained explicitely via the following compositions
\[
  \begin{tikzcd}[column sep=small]
	{M\otimes \OO_{\PP_3}(-1)} & {(M\otimes \Vv\otimes \OO_{\PP_3})} & {(M\otimes \Vv\otimes \OO_{\PP_3})\oplus (N\otimes\OO_{\PP_3})} \\
	{S_2\Vv\otimes \OO_{\PP_3}(-1)} & {S_{2}\Vv\otimes \Vv\otimes \OO_{\PP_3}} & {S_{2,1}\Vv\otimes \OO_{\PP_3}} \\
	{{\frac{S_2\Vv}{M} \otimes \OO_{\PP_3}(-1)}} && {W_{10}\otimes \OO_{\PP_3}}
	\arrow[hook, from=1-1, to=2-1]
	\arrow[hook, from=1-3, to=2-3]
	\arrow[from=2-1, to=2-2]
	\arrow[from=2-2, to=2-3]
	\arrow[from=1-1, to=1-2]
	\arrow[from=1-2, to=1-3]
	\arrow[hook, from=1-2, to=2-2]
	\arrow["\alpha", from=3-1, to=3-3]
	\arrow[two heads, from=2-1, to=3-1]
	\arrow[two heads, from=2-3, to=3-3]
      \end{tikzcd}
\]
\end{lemm}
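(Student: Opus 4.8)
The plan is to obtain both sequences by computing the higher direct images of sequence~(\ref{refunivcubi}) along $p_2\colon S\times\PP_3\to\PP_3$. The basic tool is the projection formula $R^j(p_2)_*(\cG\boxtimes\OO_{\PP_3}(k))=H^j(S,\cG)\otimes\OO_{\PP_3}(k)$ for the product $S\times\PP_3$, together with the cohomology of the relevant bundles on $S$: for the K3 surface one has $h^0(\OO_S)=h^2(\OO_S)=1$ and $h^1(\OO_S)=0$; Mukai's theorem gives $h^i(\mF)=0$ for $i>0$; and one needs $h^i(E)=0$ for $i>0$, which holds for the general member (from (\ref{seqsectionE}) one reads $h^0(E)=\dim S_2\Vv-\dim M=8=\chi(E)$ by Riemann--Roch, and the higher cohomology vanishes in Mukai's construction). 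First I would split the four-term sequence~(\ref{refunivcubi}) into the two short exact sequences
\[
0\to \OO_{S\times\PP_3}(0,-3)\to E\boxtimes\OO_{\PP_3}(-1)\to I\to 0,\qquad 0\to I\to \mF\boxtimes\OO_{\PP_3}\to\cL\to 0,
\]
where $I$ denotes the image of the central map of~(\ref{refunivcubi}).

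Pushing the first sequence forward and using $h^1(\OO_S)=h^1(E)=h^2(E)=0$ gives that $(p_2)_*I=\coker\bigl(\OO_{\PP_3}(-3)\xrightarrow{\ \tilde\alpha\ }\tfrac{S_2\Vv}{M}\otimes\OO_{\PP_3}(-1)\bigr)$ with $\tilde\alpha$ injective (push-forward is left exact), while $R^1(p_2)_*I\simeq H^2(\OO_S)\otimes\OO_{\PP_3}(-3)=\OO_{\PP_3}(-3)$ and $R^2(p_2)_*I=0$. Pushing the second sequence forward and using $h^{>0}(\mF)=0$ then yields the exact sequence
\[
0\to (p_2)_*I\to W_{10}\otimes\OO_{\PP_3}\to (p_2)_*\cL\to \OO_{\PP_3}(-3)\to 0,
\]
and simultaneously $R^1(p_2)_*\cL=R^2(p_2)_*\cL=0$. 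Setting $\alpha$ to be the composite $\tfrac{S_2\Vv}{M}\otimes\OO_{\PP_3}(-1)\twoheadrightarrow (p_2)_*I\hookrightarrow W_{10}\otimes\OO_{\PP_3}$, we get $\ker\alpha=\operatorname{im}\tilde\alpha\simeq\OO_{\PP_3}(-3)$ and $T:=\coker\alpha=(W_{10}\otimes\OO_{\PP_3})/(p_2)_*I$; this is exactly~(\ref{seqresolT}), and the displayed four-term sequence above becomes the second sequence of the statement. (The same information is packaged by the hypercohomology spectral sequence of $Rp_{2*}$ applied to~(\ref{refunivcubi}): its $E_1$-page has the row $q=0$ equal to $\OO_{\PP_3}(-3)\to\tfrac{S_2\Vv}{M}\otimes\OO_{\PP_3}(-1)\to W_{10}\otimes\OO_{\PP_3}$ and a single extra term $\OO_{\PP_3}(-3)$ coming from $H^2(\OO_S)$, everything else vanishing, so it degenerates.)

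From~(\ref{seqresolT}) the rank of $T$ is $10-8+1=3$. Local freeness is the one nonformal point, and I expect it to be the main obstacle. Here $\alpha$ is a map of bundles of ranks $8$ and $10$ whose generic rank is $7$ (its kernel sheaf being the line bundle $\OO_{\PP_3}(-3)$), so $T=\coker\alpha$ fails to be locally free exactly on the locus where $\alpha$ has corank $\ge 2$, i.e.\ rank $\le 6$. This degeneracy locus has expected codimension $(8-6)(10-6)=8>3=\dim\PP_3$, hence is empty for the general choice of $(M,N)$, forcing $\alpha$ to have constant rank $7$ and $T$ to be a rank $3$ vector bundle. Equivalently, since $R^1(p_2)_*\cL=0$, cohomology and base change reduce local freeness of $(p_2)_*\cL$ (and hence of $T$, the kernel of its surjection onto the line bundle $\OO_{\PP_3}(-3)$) to the constancy of $h^0$ on the fibres of $p_2$, which holds for general $S$. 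This is the step where generality of $S$ is genuinely used; in the computational spirit of the paper it may also be certified on one explicit example and spread out by semicontinuity.

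Finally, the explicit shape of $\alpha$ follows by tracing the map $E\boxtimes\OO_{\PP_3}(-1)\to\mF\boxtimes\OO_{\PP_3}$ of~(\ref{refunivcubi}) through the identifications~(\ref{seqsectionE}) and~(\ref{seqsectionF}). On global sections over $S$ it is induced by multiplication by the tautological linear form $\OO_{\PP_3}(-1)\to\Vv\otimes\OO_{\PP_3}$ followed by the $\SL(\Vv)$-equivariant projection $S_2\Vv\otimes\Vv\twoheadrightarrow S_{2,1}\Vv$, compatibly with the sub-objects $M\subset S_2\Vv$ and $(\Vv\otimes M)\oplus N\subset S_{2,1}\Vv$. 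This compatibility is precisely the commutativity of the two rows of the diagram in the statement, and the induced arrow on the quotient row $\tfrac{S_2\Vv}{M}\otimes\OO_{\PP_3}(-1)\to W_{10}\otimes\OO_{\PP_3}$ is $\alpha$.
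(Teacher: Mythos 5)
Your derivation of the two exact sequences and of the explicit form of $\alpha$ is correct and is essentially the paper's argument: splitting sequence~(\ref{refunivcubi}) into two short exact sequences and pushing forward is just an unrolled version of the hypercohomology spectral sequence used in the paper (as you note yourself), and both routes give $\ker\alpha=\OO_{\PP_3}(-3)$, $T=\coker\alpha$, the extension $0\to T\to (p_2)_*\cL\to \OO_{\PP_3}(-3)\to 0$, the vanishing of the higher direct images, and the identification of $\alpha$ with the equivariant composition in the diagram.

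The gap is in the one point you yourself flag as nonformal: local freeness of $T$. Your main argument --- the corank-$2$ locus of $\alpha$ has expected codimension $(8-6)(10-6)=8>3$, hence is empty for the general choice of $(M,N)$ --- is an invalid inference. Expected-codimension statements apply to a \emph{general} element of $\Hom\bigl(\tfrac{S_2\Vv}{M}\otimes\OO_{\PP_3}(-1),\,W_{10}\otimes\OO_{\PP_3}\bigr)$, and $\alpha$ is demonstrably not general: a general map of these bundles is injective as a sheaf map with finite corank-$\ge 1$ locus, whereas every $\alpha$ arising from Mukai's construction contains $\OO_{\PP_3}(-3)$ in its kernel, i.e.\ has corank $\ge 1$ everywhere. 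The maps $\alpha$ form only a $52$-parameter family (the choices of $M$ and $N$) inside the $320$-dimensional Hom space, constrained by equivariance, so nothing rules out a priori that the corank-$2$ locus is nonempty for \emph{every} choice of $(M,N)$; even the numerology is off, since granting the forced kernel the relevant count is for the induced map from the rank-$7$ quotient of $\tfrac{S_2\Vv}{M}\otimes\OO_{\PP_3}(-1)$ by $\OO_{\PP_3}(-3)$, giving expected codimension $(7-6)(10-6)=4$, not $8$ --- and that count again presupposes exactly the genericity in question. To close the gap one must either carry out an incidence argument over the $(M,N)$-parameter space (fix $x\in\PP_3$, bound the codimension of the locus of pairs $(M,N)$ with $\rank\alpha_x\le 6$, and sweep over $\PP_3$), or do what the paper does in Lemma~\ref{lemmeT}: verify on one explicit example, by a Macaulay2 saturation of the ideal of minors of $\alpha$, that the corank-$2$ locus is empty, and conclude for general $(M,N)$ by openness. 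Your closing sentence gestures at this second route, but as written your proof rests on the genericity inference. The same objection applies to your alternative via cohomology and base change: ``constancy of $h^0$ on the fibres of $p_2$, which holds for general $S$'' is precisely the statement to be proved, asserted rather than shown (and base change would in addition require flatness of $\cL$ over $\PP_3$, which is not addressed).
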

\begin{proof}
  Let $C_{\bullet}$ be the resolution of $\cL$ defined by
  sequence~(\ref{refunivcubi}). For
  all $i>0$ we have the vanishing $0=h^i(E)=h^i(\mF)$,  so the hypercohomology spectral sequence
  ${E_{1}^{p,q}=H^q(C_p)} \Rightarrow R^{p+q}(p_2)_*(\cL)$  starts with

  \[
    \begin{tikzcd}[row sep=small]
	&&& {} \\
	& {{H^2(\OO_S)\otimes\OO_{\PP_3}(-3)}} & {.} & {.} \\
	& {.} & {.} & {.} \\
	{} & {H^0(\OO_S)\otimes\OO_{\PP_3}(-3)} & {H^0(E)\otimes \OO_{\PP_3}(-1)} & {H^0(\mF)} & {} \\
	&&& {}
	\arrow[no head, from=4-1, to=4-2]
	\arrow[no head, from=4-2, to=4-3]
	\arrow[no head, from=4-3, to=4-4]
	\arrow["p", from=4-4, to=4-5]
	\arrow[no head, from=5-4, to=4-4]
	\arrow[no head, from=4-4, to=3-4]
	\arrow[no head, from=3-4, to=2-4]
	\arrow["q"', from=2-4, to=1-4]
      \end{tikzcd} .
    \]

So it ends with only two non zero terms $E_2^{0,0}=T$ and
$E_2^{-2,2}=\OO_{\PP_3}(-3)$. It gives the two exact sequences of the lemma and
the vanishing of $R^{i}(p_2)_*(\cL)$ for $i>0$. The description of $\alpha$ is straightforward
from the exact sequences (\ref{seqsectionE}) and (\ref{seqsectionF}) and
study of $T$ is done in Lemma~\ref{lemmeT}.
\end{proof}

\begin{prop}\label{refpropbeta}
  Let $\beta$ be the following map
  \[
    \frac{S_2\Vv}{M} \otimes \OO_{\PW}(-1) \xrightarrow{\quad\beta\quad} V
    \otimes \OO_{\PW}
  \]
  defined by the element $\alpha \in \Hom(
  \frac{S_2\Vv}{M}, W_{10} \otimes V)$ of Lemma~\ref{refdefalpha}. Let $p_\pi: V \twoheadrightarrow V_\pi$ be a general
  quotient of rank $3$ of $V$ (i.e. the choice of a plane $\pi\subset \PP_3$),
  then the saturation of the ideal of $(3\times 3)$-minors of $p_\pi \circ \beta$ contains four independent elements of $H^0(\IX(2))$.
\end{prop}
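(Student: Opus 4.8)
The plan is to read the single tensor $\alpha$ on the two projective spaces it controls, and to extract everything from the syzygy produced by the sub-line-bundle $\OO_{\PP_3}(-3)$ of sequence~(\ref{seqresolT}). Writing $\alpha\in\Hom(\tfrac{S_2\Vv}{M},\,\Wdix\otimes V)$, the map $\beta$ is the contraction of the $\Wdix$-factor against the tautological quotient $\Wdix\otimes\OO_{\PW}\to\OO_{\PW}(1)$, so that at $x\in\PW$ the homomorphism $\beta_x\colon\tfrac{S_2\Vv}{M}\to V$ is simply $\alpha$ evaluated at $x$. First I would record the essential constraint: since the composite $\OO_{\PP_3}(-3)\to\tfrac{S_2\Vv}{M}\otimes\OO_{\PP_3}(-1)\xrightarrow{\alpha}\Wdix\otimes\OO_{\PP_3}$ vanishes, one gets for every $x$ the identity $\sum_a\kappa_a\,\beta_x(e_a)=0$ in $S_3V$, where the $\kappa_a$ are the quadrics spanning $\mathrm{Ann}(M)\subset S_2V$. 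Under $\Hom(\tfrac{S_2\Vv}{M},V)=\mathrm{Ann}(M)\otimes V\subset S_2V\otimes V$ this says precisely that $\beta_x$ lands in the Schur summand $S_{2,1}V=\Ker\!\big(S_2V\otimes V\to S_3V\big)$; equivalently the $V$-valued quadratic form $B$ attached to $\beta_x$ has identically vanishing full symmetrisation.

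Next I would unwind the degeneracy condition. Fixing a basis $\phi_1,\phi_2,\phi_3$ of $V_\pi^\dual=\ell_\pi^{\perp}\subset\Vv$, the three ``rows'' of $p_\pi\circ\beta$ are the quadrics $Q_{x,\phi_r}:=\phi_r\circ\beta_x\in\mathrm{Ann}(M)$, each linear in $x$, so $x$ lies in the degeneracy locus $D_\pi$ cut out by the $(3\times3)$-minors exactly when $Q_{x,\phi_1},Q_{x,\phi_2},Q_{x,\phi_3}$ become dependent, i.e.\ when $Q_{x,\phi}=0$ for some $0\neq\phi\in\ell_\pi^{\perp}$. The apolarity $B(\eta,\eta,\eta)=0$ shows that this is a genuinely special net: each $Q_{x,\eta}$ passes through the point $[\eta]$ of the plane $\pi=\PP(\ell_\pi^{\perp})$. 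The degree-two elements I am after should come from the second incarnation $S_{2,1}V\subset V\otimes\bw2 V$ of the same syzygy, which attaches to $(x,\phi)$ a bivector $\beta_{x,\phi}\in\bw2 V$ whose decomposability $\beta_{x,\phi}\wedge\beta_{x,\phi}=0$ in $\bw4 V\cong\CC$ is a single quadric in $x$. The crux is that, because of the $S_{2,1}V$-syzygy, this Pfaffian-type quadric is a ``square root'' of the maximal-minor condition along $D_\pi$: the minors do \emph{not} define a codimension-six Cohen–Macaulay scheme but a deficient one, so these quadrics lie in the saturation of the minor ideal without lying in the ideal itself. Producing them explicitly and proving the degree drop is the first main task.

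To see that the relevant quadrics vanish on $X$ I would use the factorisation of $\beta$ on $X$. For $x=(s,[q])\in\PF$ the evaluation $\beta_x$ factors as $\tfrac{S_2\Vv}{M}=H^0(\mF)\ \xleftarrow{}\ H^0(E)\xrightarrow{\ \ev_s\ }E_s\longrightarrow V$ through the fibre of the rank-three bundle $E$ of sequence~(\ref{refunivcubi}); hence $\rank(\beta_x)\le3$ and $\Im(\beta_x)$ is contained in a three-space $I_{s,q}\subset V$. On this locus the associated bivectors degenerate, and I would reduce the vanishing of the degree-two invariants to a finite linear-algebra identity on $I_{s,q}$ (for instance, that the bivectors lie in $\bw2 I_{s,q}$, which is automatically decomposable). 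This exhibits them as elements of $H^0(\IX(2))$, and they are then among the ten quadrics of Theorem~\ref{refequationsX}. A dimension count for general $\pi$ — equivalently, that the induced linear map from the quadrics through $\pi$ into $S_2\Wdix$ has four-dimensional image — yields the asserted four independent quadrics.

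The step I expect to be the genuine obstacle is the simultaneous control of three things: (i) the degree drop, i.e.\ a rigorous proof that the degree-two Pfaffians really lie in the saturation of the degree-three minor ideal, which amounts to quantifying the failure of the maximal-minor scheme to be Cohen–Macaulay of the expected codimension; (ii) the vanishing on $X$, which is subtle because on $X$ the tensor $\beta_x$ has rank $\le3$ but need not lie in the intrinsic $S_{2,1}(I_{s,q})$, so the identity must be checked on the actual rank-three stratum rather than deduced formally; and (iii) the exact count ``four'' for general $\pi$. I would settle (i) either by exhibiting explicit linear syzygies expressing a linear multiple of each Pfaffian as a combination of the minors, or by a local computation at the generic point of $D_\pi$; the same analysis pins down the four-dimensionality and the containment in $H^0(\IX(2))$. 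Since $\alpha$, hence $\beta$ and $p_\pi\circ\beta$, are completely explicit, this final verification is also directly accessible in \cite[Macaulay2]{macaulay2}.
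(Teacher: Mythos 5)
Your structural observations are sound: $\beta_x$ lies in $S_{2,1}V$, the degeneracy locus of $p_\pi\circ\beta$ is swept out by the planes $\{x\,:\,Q_{x,\psi}=0\}$ for $[\psi]\in\pi$, and $\rank\beta_x\le 3$ on $X$. But the engine of your proof breaks: for a fixed $\phi\in V_\pi^\dual$ the Pfaffian quadric $q_\phi(x)=\beta_{x,\phi}\wedge\beta_{x,\phi}$ is \emph{not} in the saturation of the ideal of $3\times3$ minors, because it does not vanish on the whole degeneracy locus (an element of the saturation must vanish on the support of that locus). What the $S_{2,1}$-syzygy actually gives is decomposability only ``on the diagonal'': if $\psi$ is a witness at $x$, i.e.\ $Q_{x,\psi}=0$, then contracting the symmetrisation identity with $\psi$ shows $\beta_{x,\psi}\lrcorner\psi=0$, hence $\beta_{x,\psi}\in\wedge^2(\ker\psi)$ is decomposable; for $\phi$ not proportional to the witness there is no such conclusion. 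Concretely, let $e_1,\dots,e_4$ be a basis of $V$ with dual basis $e_1^\dual,\dots,e_4^\dual$, and let $T\in S_{2,1}V$ be the projection of $e_1\otimes(e_2\wedge e_3)+e_4\otimes(e_1\wedge e_2)\in V\otimes\wedge^2 V$. Then $T$ has witness $\psi=e_4^\dual$ (its contraction with $e_4^\dual$ on the last factor vanishes), while for $\phi=e_1^\dual+e_2^\dual$ one finds, up to the normalisation of the inclusion $S_{2,1}V\subset V\otimes\wedge^2 V$, that $\beta_{T,\phi}=e_2\wedge e_3+\tfrac12\,e_1\wedge e_3-\tfrac12\,e_2\wedge e_4+\tfrac12\,e_1\wedge e_4$, whose square is $\tfrac32\,e_1\wedge e_2\wedge e_3\wedge e_4\neq0$. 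So if $\pi$ is chosen with $e_4^\dual$ and $e_1^\dual+e_2^\dual$ in $V_\pi^\dual$, the quadric $q_{e_1^\dual+e_2^\dual}$ is nonzero at degeneracy points with witness $e_4^\dual$: each $q_\phi$ vanishes only on the single plane attached to the witness $[\phi]$, not on the two-parameter family of planes composing the locus. Hence your items (i)--(iii) are not closable gaps but symptoms of a false intermediate claim, at least at the level of generality at which you argue; nothing in your argument uses the special position of the subspace $\Wdix^\dual\subset S_{2,1}V$ cut out by $M$ and $N$, which is the only thing that could conceivably rescue the formula, and a Macaulay2 run cannot substitute for that missing mechanism.

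The two ideas actually needed (and used in the paper) are the following. First, the degeneracy scheme is the image $\Sigma_\pi$ in $\PW$ of $\PP(T_{\vert\pi})$, for the rank-$3$ bundle $T$ on $\PP_3$ of Lemma~\ref{refdefalpha}; the four quadrics are then obtained not by a closed formula but by the cohomological count $h^0(\cI_{\Sigma_\pi}(2))\ \ge\ h^0(\OO_{\PW}(2))-h^0(S_2(T_{\vert\pi}))=55-51=4$, which is exactly what Lemma~\ref{lemmeT} supplies. Second --- and this replaces your rank-$\le3$ reduction, which you correctly flag as inconclusive --- the inclusion $H^0(\cI_{\Sigma_\pi}(2))\subset H^0(\IX(2))$ follows from a divisor-class computation on $\PF$: the pullback of $\Sigma_\pi\cap X$ is the divisor of $\det(p_\pi\circ\beta')$, of class $3H-h$, and since $h^0(\OO_{\PF}(2H-(3H-h)))=h^0(\OO_{\PF}(h-H))=0$, any quadric through $\Sigma_\pi$ vanishes identically on $X$. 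Note what these two arguments have in common: they treat all witnesses $[\psi]\in\pi$ at once (through the bundle $T_{\vert\pi}$ and the multiplication map $S^2\Wdix\to H^0(S_2(T_{\vert\pi}))$), whereas any pointwise invariant of a single contraction $\beta_{x,\phi}$, such as your Pfaffian, can only see one witness at a time.
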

\begin{proof}
  Let $p_S$ be the projection from $\PF$ to $S$ and still denote by  $H$ and $h$
  their hyperplane classes (cf. Definition~\ref{refdefHh}). Denote by $C$ the scheme defined in
  $\PW$ by the $(3\times 3)$-minors of $\beta$.

  The vector bundle $E$ is globally generated with $H^0(E)=\frac{S_2\Vv}{M}$, so the
  pullback of $\beta$ on $\PF$ factors through the map $\beta'$
  \begin{equation}
    \label{seqbetaprime}
        \begin{tikzcd}[column sep=small]
	0 & {p_S^*E(-H)} && {V\otimes\OO_{\PF}} & {\cI_{C|\PF}(3H-h)} & 0
	\arrow[from=1-1, to=1-2]
	\arrow["{\beta'}", from=1-2, to=1-4]
	\arrow[from=1-4, to=1-5]
	\arrow[from=1-5, to=1-6]
      \end{tikzcd},
  \end{equation}
  where $\cI_{C|\PF}$ is here just an ideal of $\PF$ defined by $\beta'$. But
  a-posteri $C$ is included in $X$  so after Corollary~\ref{refXsmooth}
  $\cI_{C|\PF}$ turns out to be the ideal of $C$ in $\PF$.

  Remark from the definition of
  $T$ in sequence~(\ref{seqresolT}) that $\PP(T^\dual)$ is defined in
  $\PP_9\times \PP_3$ by the vanishing of $p\circ \beta$ where $p:V\otimes
  \OO_{\PP_3} \to \OO_{\PP_3}(1)$ is the tautological quotient map.
Let $T_{\vert \pi}$ be restriction to the plane $\pi$ of the vector bundle
  defined in Lemma~\ref{refdefalpha}. Denote by $\Sigma_\pi$ the image of
  $\PP(T_{\vert \pi})$ in $\PW$. It is the subscheme of $\PW$ defined by the $3\times 3$ minors of
  $p_\pi \circ \beta$. First show that $H^0(\cI_{\Sigma_\pi}(2)) \subset H^0(\cI_{X}(2))$.

  The pullback of $\Sigma_\pi\cap X$ to $\PF$ is defined by
  the determinant of $p_\pi\circ \beta'$ so its class is $3H-h$. From
  $h^0(\OO_{\PF}(2H-(3H-h))=0$, we get that any quadric containing $\Sigma_\pi$
  contains also $X$.

  It remains to prove that $h^0(\cI_{\Sigma_\pi}(2))\geq 4$. From
  Lemma~\ref{lemmeT} we have  ${h^0(S_2 (T_{\vert \pi})) = 51}$ so
  $h^0(\cI_{\Sigma_\pi}(2)) \geq h^0(\OO_{\PW}(2))-h^0(S_2 (T_{\vert \pi})) =4$,
  and the proposition is proved.

\smallskip
Moreover  $C$ is a subscheme of $\Sigma_\pi$ so $H^0(\cI_C(2))$ contains  $\bigcup_{\pi \in \PP_3^\dual}
H^0(\cI_{\Sigma_\pi}(2))$.
But Theorem~\ref{refequationsX} shows that this union spans $H^0(\cI_X(2))$ and
that $\cI_X$ is generated by quadrics so $C$ is a subscheme of $X$.
\end{proof}
\begin{lemm}\label{lemmeT}
  The cokernel $T$ of $\alpha$ is a vector bundle of rank $3$ with
  \begin{itemize}
  \item[---] Chern classes $c_1(T)=5$, $c_2(T)=12$, $c_3(T)=12$ and
  \item[---]  $H^0(T)=W_{10}$, $i>0~\Rightarrow~h^i(T)=0$
  \item[---] $\chi(S_2(T))=53$,  $\chi((S_2(T))(-1))=2$.
  \item[---] For any plane $\pi$ of $\PP_3$  we have $\chi(S_2(T_{\vert
      \pi}))=51$, $i>0 \Rightarrow h^i(T_{\vert\pi} \otimes T_{\vert\pi}) = 0$.
  \end{itemize}
\end{lemm}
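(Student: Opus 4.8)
The plan is to read every assertion off the four-term locally free resolution
\[
0 \to \OO_{\PP_3}(-3) \to \frac{S_2\Vv}{M}\otimes\OO_{\PP_3}(-1) \xrightarrow{\ \alpha\ } W_{10}\otimes\OO_{\PP_3} \to T \to 0
\]
of Lemma~\ref{refdefalpha}, noting $\dim\frac{S_2\Vv}{M}=10-2=8$ and that local freeness of $T$ together with $\rank T = 10-8+1=3$ is already provided by that lemma (via $0\to T\to (p_2)_*\cL\to\OO_{\PP_3}(-3)\to 0$ and $R^{>0}(p_2)_*\cL=0$). The Chern classes then follow from the Whitney formula applied to the resolution: writing $t$ for the hyperplane class,
\[
c(T)=\frac{c(\OO_{\PP_3}^{\oplus 10})\,c(\OO_{\PP_3}(-3))}{c(\OO_{\PP_3}(-1)^{\oplus 8})}=\frac{1-3t}{(1-t)^{8}},
\]
and expanding modulo $t^4$ gives $c_1(T)=5,\ c_2(T)=12,\ c_3(T)=12$. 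For the cohomology of $T$ I would split the resolution at $K:=\Im\alpha$ into $0\to\OO_{\PP_3}(-3)\to\OO_{\PP_3}(-1)^{\oplus 8}\to K\to 0$ and $0\to K\to W_{10}\otimes\OO_{\PP_3}\to T\to 0$. On $\PP_3$ all cohomology of $\OO(-3)$ and $\OO(-1)$ vanishes, so the first sequence gives $H^\bullet(K)=0$, and the second then yields $H^\bullet(T)\cong H^\bullet(W_{10}\otimes\OO_{\PP_3})$, i.e.\ $H^0(T)=W_{10}$ and $h^i(T)=0$ for $i>0$.

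For the Euler characteristics I would use Hirzebruch–Riemann–Roch on $\PP_3$, $\chi(\cE)=\int_{\PP_3}\ch(\cE)\,\td(\PP_3)$ with $\td(\PP_3)=1+2t+\frac{11}{6}t^2+t^3$. From $c(T)$ one gets $\ch(T)=3+5t+\frac12 t^2-\frac{19}{6}t^3$, and then from $\ch(S_2T)=\frac12\big(\ch(T)^2+\psi^2\ch(T)\big)$, where $\psi^2$ multiplies the degree-$k$ component by $2^k$, one finds $\ch(S_2T)=6+20t+15t^2-\frac{59}{3}t^3$. Integrating $\ch(S_2T)\,\td(\PP_3)$ and $\ch(S_2T)\,e^{-t}\,\td(\PP_3)$ produces $\chi(S_2T)=53$ and $\chi(S_2T(-1))=2$.

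The plane statement is the most delicate. Since every term of the resolution is locally free, restriction to a plane $\pi\cong\PP_2$ keeps both short exact sequences exact, so $T_{\vert\pi}$ has resolution $0\to\OO_{\PP_2}(-3)\to\OO_{\PP_2}(-1)^{\oplus 8}\to W_{10}\otimes\OO_{\PP_2}\to T_{\vert\pi}\to 0$; HRR on $\PP_2$ with $\td(\PP_2)=1+\frac32 s+s^2$ and $\ch(S_2T_{\vert\pi})=6+20s+15s^2$ gives $\chi(S_2T_{\vert\pi})=51$. For $h^i(T_{\vert\pi}\otimes T_{\vert\pi})=0$ with $i>0$, I would tensor the restricted resolution by the locally free sheaf $T_{\vert\pi}$, obtaining an exact complex
\[
0\to T_{\vert\pi}(-3)\to T_{\vert\pi}(-1)^{\oplus 8}\to W_{10}\otimes T_{\vert\pi}\to T_{\vert\pi}\otimes T_{\vert\pi}\to 0,
\]
and run the hypercohomology spectral sequence of the length-two complex $\big[\,T_{\vert\pi}(-3)\to T_{\vert\pi}(-1)^{\oplus8}\to W_{10}\otimes T_{\vert\pi}\,\big]$, whose only nonzero cohomology sheaf is $T_{\vert\pi}\otimes T_{\vert\pi}$ in degree $0$. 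The $E_1$-page requires the cohomology of the three twists on $\PP_2$, which I would extract from the two short exact sequences as before: one gets $H^{>0}(T_{\vert\pi})=0$, $H^{>0}(T_{\vert\pi}(-1))=0$, and $H^0(T_{\vert\pi}(-3))=0$ (so only $H^1$ and $H^2$ of $T_{\vert\pi}(-3)$ can survive).

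The key observation—and the finishing step—is that once these vanishings hold, every $E_1$-entry in the rows $q=1,2$ lies in the single leftmost column coming from $T_{\vert\pi}(-3)$, and can therefore contribute only to hypercohomology in total degree $\le 0$; consequently the total-degree $1$ and $2$ pieces of the abutment, namely $H^1$ and $H^2$ of $T_{\vert\pi}\otimes T_{\vert\pi}$, receive no contribution and vanish, irrespective of the ranks of the connecting maps. The main obstacle, and the step I expect to be most error-prone, is thus the careful bookkeeping of the cohomology of the twists $T_{\vert\pi}(-j)$ on $\PP_2$ and the exact placement of the surviving $E_1$-terms; I would stress that this vanishing needs no genericity of $M,N$, only the shape of the resolution and the line-bundle cohomology of $\PP_2$. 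Combined with the computed Euler characteristic, the vanishing $H^{>0}(T_{\vert\pi}\otimes T_{\vert\pi})=0$ yields $h^0(S_2T_{\vert\pi})=51$, exactly as used in Proposition~\ref{refpropbeta}.
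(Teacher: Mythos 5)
Your Whitney-formula and Riemann--Roch computations are all correct ($c(T)=(1-3t)(1-t)^{-8}=1+5t+12t^{2}+12t^{3}$, $\ch(S_2T)=6+20t+15t^{2}-\tfrac{59}{3}t^{3}$, giving $53$, $2$ and $51$), and your hypercohomology argument for $h^{i}(T_{\vert\pi}\otimes T_{\vert\pi})=0$, $i>0$, is a correct expansion of what the paper does in one line by splitting the restricted resolution and using $h^{i}(T_{\vert\pi})=h^{i}(T_{\vert\pi}(-1))=0$ for $i>0$. So for everything downstream of local freeness you follow essentially the paper's route, only with more detail.

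The genuine gap is the very first step, which is in fact the crux of the lemma: you take local freeness and $\rank T=3$ as ``already provided'' by Lemma~\ref{refdefalpha}. This is circular. The statement of that lemma does describe $T$ as a rank-$3$ vector bundle, but its proof ends by deferring precisely this point: ``study of $T$ is done in Lemma~\ref{lemmeT}''. So the vector-bundle property is an obligation of the present lemma, not an input to it; what you may legitimately import from Lemma~\ref{refdefalpha} is the exactness of the four-term sequence (hence the generic rank $3$), nothing more. Your parenthetical justification via $0\to T\to (p_2)_*\cL\to\OO_{\PP_3}(-3)\to 0$ and $R^{>0}(p_2)_*\cL=0$ does not close the gap either: the cokernel of a map of vector bundles is locally free only where the map has constant rank, i.e.\ where $\alpha$ has rank exactly $7$ at every point; equivalently, one would need $(p_2)_*\cL$ itself to be locally free, which is neither stated nor automatic (it is the same constancy-of-rank assertion in disguise). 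The paper spends the first half of its proof exactly here: for $M$ general the composite $\OO_{\PP_3}(-3)\to \frac{S_2\Vv}{M}\otimes\OO_{\PP_3}(-1)$ is injective, so $T$ has rank at least $3$; then a Macaulay2 computation on an explicit example shows that the locus where $\alpha$ drops below rank $7$ is empty, and semicontinuity extends this to generic $(M,N)$. This also corrects your closing remark that ``no genericity of $M,N$'' is needed: none is needed once $T$ is a bundle, but local freeness of $T$ is itself a genericity statement, and without it the formation of $S_2(T)$, the Adams-operation formula for $\ch(S_2T)$, and the exactness of the restriction of the resolution to a plane (which uses that no ${\mathcal Tor}$ terms appear) are all unjustified.
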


\begin{proof}First remark that for $M$ general, the tautological inclusion
  $$0 \rightarrow \OO_{\PP_3}(-3) \rightarrow S_2\Vv \otimes \OO_{\PP_3}(-1)$$ is still
  injective after composition with the projection $$S_2\Vv \otimes
  \OO_{\PP_3}(-1) \rightarrow \frac{S_2\Vv}{M} \otimes \OO_{\PP_3}(-1).$$
  So the kernel of $\alpha$ contains $\OO_{\PP_3}(-3)$ and the sheaf $T$ has rank at
  least $3$.
  But we have checked on an example with \cite[Macaulay2]{macaulay2} that the saturation of
  the ideal generated by the $3\times3$-minors of $\alpha$ is $(1)$, so $T$ is a
  vector bundle of rank $3$.

The exact sequence~(\ref{seqresolT}) gives $H^0(T)=W_{10}$, and the vanishing
$h^i(T)=0$ for $i>0$. Its restriction to a plane $\pi$ gives
$h^i(T_{\vert\pi})=h^i(T_{\vert\pi}(-1))=0$ for $i>0$. So for $i>0$ we have
$h^i(T_{\vert\pi} \otimes T_{\vert\pi}) = 0$.
\end{proof}

  \begin{rema}\nbmarge{C'est peut être une piste intéréssante?}
    Note that $h^0(S^2(T))(-1)=2$ marks a pencil of quadrics containing $X$. So the choice
    of $E$ gives a line in this space of quadrics $\PV$. We should understand more this choice. Note
    that J.Meng  made recent progress in this direction. The
    moduli space of $E$ is called the Fourier-Mukai partner $(S',h')$ of $(S,h)$
    and he proved (cf. \cite[Lemma~4.1]{meng}) that $\PP(T_{S'}^1(-h'))$ has a natural projection to $\PV$.

    This pencil can be computed from Proposition~\ref{refpropbeta} as the intersection of several
    $H^0(\cI_{\Sigma_\pi}(2))$ for generic choices of planes $\pi$ in
    $\PP_3$. Computing this pencil on random examples always gives us a pencil
    of quadrics of rank $8$ (so included in the discriminant hypersurface). Moreover it was
    always a $4$-secant line to the Peskine\footnote{cf. §4.} variety $Y^1_{t_1}\subset \PV$.
  \end{rema}

  \begin{rema}
   For any point $x$ of $\PP_3$ denote by $\widetilde{\cC_S}_x$  the fiber of
   $\widetilde{\cC_S}$ (cf. Remark~\ref{refCStilde}) over $x$. Then
   $\widetilde{\cC_S}_x$ is the primage in $\PF$ of the intersection of $X$ with
   the plane $\PP(T^\dual_x)$.
\[\begin{tikzcd}[column sep=scriptsize,row sep=small]
	& \PF && {\widetilde{\cC_S}} \\
	S & X & {\PP_9} & {\PP(T^\dual)} & {\PP_3}
	\arrow["{p_S}"', from=1-2, to=2-1]
	\arrow[from=1-4, to=1-2]
	\arrow[hook, from=1-4, to=2-4]
	\arrow[from=2-4, to=2-5]
	\arrow["{{\tiny 4:1}}", from=1-4, to=2-5]
	\arrow[from=2-4, to=2-3]
	\arrow[from=1-2, to=2-2]
	\arrow[hook, from=2-2, to=2-3]
\end{tikzcd}\]
\end{rema}
\begin{proof}
  Let $p_x$ be the  quotient of $V$ defined by $x$. The fiber $\widetilde{\cC_S}_x$
  is defined in $\PF$ by the vanishing of $p_x \circ \beta'$ (cf. sequence~(\ref{seqbetaprime})). But the
  vanishing of $p_x \circ \beta$ defines in $\PW$ the plane $\PP(T^\dual_x)$. So
  $\widetilde{\cC_S}_x$ is just defined by the intersection $X\cap
  \PP(T^\dual_x)$ and $\PP(T^\dual_x)$ is a family of quadrisecant planes to $X$.

\end{proof}

\subsection{Equations of $X$ in $\PP_9$}
\begin{claim}
The Hilbert polynomial of $X$ is \[\chi(\OO_{\PF}(mH))=P_X(m)=21P_3(m)-36P_2(m)+17P_1(m)=\frac{7
    m^{3}+6 m^{2}+13 m+14}{2},\]
  where $P_i$ is the Hilbert polynomial of the projective space of dimension
  $i$.
\end{claim}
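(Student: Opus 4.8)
The plan is to identify $P_X(m)$ with $\chi\bigl(\PF,\OO_{\PF}(mH)\bigr)$ and then reduce that Euler characteristic to a Hirzebruch--Riemann--Roch computation on the K3 surface $S$. Since the linear system $|\OO_{\PF}(H)|$ embeds $\PF$ isomorphically onto $X\subset\PW$ with $\OO_X(1)$ pulling back to $\OO_{\PF}(H)$ (Theorem~\ref{refequationsX}), the Hilbert polynomial of $X$ is exactly $P_X(m)=\chi\bigl(X,\OO_X(m)\bigr)=\chi\bigl(\PF,\OO_{\PF}(mH)\bigr)$, so it suffices to compute the latter. Here $\pi\colon\PF=\Proj(\Sym\mF)\to S$ is a $\PP^1$-bundle and $H$ is the relative hyperplane class, so $\pi_*\OO_{\PF}(mH)=\Sym^m\mF$ and $R^i\pi_*\OO_{\PF}(mH)=0$ for $i>0$ and $m\geq 0$. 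The Leray spectral sequence then gives $\chi\bigl(\PF,\OO_{\PF}(mH)\bigr)=\chi\bigl(S,\Sym^m\mF\bigr)$ for all $m\geq 0$, and since both sides are polynomials in $m$ this equality of Hilbert polynomials holds identically.

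Next I would run HRR on the K3 surface. Using $\td(S)=1+2[\mathrm{pt}]$ and $\chi(\OO_S)=2$, for any vector bundle $V$ one has $\chi(S,V)=\ch_2(V)+2\rank(V)$, the middle term dropping out because $\td_1(S)=0$. To apply this to $V=\Sym^m\mF$ I write the Chern roots of $\mF$ as $a,b$, so that $a+b=c_1(\mF)=h$, $ab=c_2(\mF)=9$, and $a^2+b^2=h^2-2c_2=30-18=12$. The Chern roots of $\Sym^m\mF$ are $ia+(m-i)b$ for $0\leq i\leq m$, whence $\rank\Sym^m\mF=m+1$ and
\[
\ch_2(\Sym^m\mF)=\tfrac12\sum_{i=0}^{m}\bigl(ia+(m-i)b\bigr)^2=\tfrac12 m(m+1)(7m-1),
\]
where the last equality follows from the standard sums $\sum i^2$, $\sum i(m-i)$ together with the substitutions $a^2+b^2=12$, $ab=9$. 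This is the only genuinely computational step, and it is routine.

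Combining the two steps yields
\[
\chi\bigl(S,\Sym^m\mF\bigr)=\tfrac12 m(m+1)(7m-1)+2(m+1)=21P_3(m)-36P_2(m)+17P_1(m),
\]
which is the claimed value. As sanity checks forced by Mukai's numerics, $m=0$ gives $\chi(\OO_{\PF})=\chi(\OO_S)=2$ and $m=1$ gives $\chi(\mF)=h^0(\mF)=10$, both of which the formula reproduces. I do not expect a conceptual obstacle here; the only points demanding care are fixing the convention $\PF=\PP(\mF^\dual)=\Proj(\Sym\mF)$ so that the pushforward is $\Sym^m\mF$ rather than a dual or a twist, and feeding in the correct genus-$16$ numerics $h^2=30$, $c_2(\mF)=9$. (An alternative, slightly longer route is to apply HRR directly on the threefold $\PF$ using $\int_{\PF}H^3=c_1(\mF)^2-c_2(\mF)=21$ and the Todd class of the $\PP^1$-bundle, but pushing forward to $S$ is cleaner.)
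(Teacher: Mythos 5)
Your computation is correct, and it is worth saying up front that the paper states this Claim with no proof at all, so your argument supplies the missing verification; the route you take (identify $\chi(\OO_{\PF}(mH))$ with $\chi(S,\Sym^m\mF)$ via the vanishing of higher direct images, then Hirzebruch--Riemann--Roch on the K3 surface with $h^2=30$, $c_2(\mF)=9$) is the natural one and all the conventions are handled correctly. Two remarks. First, a point you should have flagged explicitly: your result $\tfrac12 m(m+1)(7m-1)+2(m+1)=\tfrac12\bigl(7m^3+6m^2+3m+4\bigr)$ does agree with $21P_3(m)-36P_2(m)+17P_1(m)$, but it does \emph{not} agree with the last expression displayed in the Claim, namely $\tfrac12\bigl(7m^3+6m^2+13m+14\bigr)$; that fraction is a typo in the paper, since it takes the value $7$ at $m=0$, whereas $\chi(\OO_{\PF})=\chi(\OO_S)=2$ and $\chi(\mF)=10$ at $m=1$, exactly the sanity checks you ran. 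So your proof in fact corrects the statement rather than merely confirming it, and you should say so instead of writing ``which is the claimed value.'' Second, a caution about your opening step: you invoke Theorem~\ref{refequationsX} (really Corollary~\ref{refXsmooth}) to identify the Hilbert polynomial of $X$ with $\chi(\OO_{\PF}(mH))$, but in the paper this Claim precedes those results and is used in the proof of Theorem~\ref{refequationsX}, so leaning on them risks circularity. Fortunately this does not damage your proof: the first equality in the Claim's display simply \emph{defines} $P_X(m)$ as $\chi(\OO_{\PF}(mH))$, so the genuine content is the Euler-characteristic computation on $\PF$, which your HRR argument establishes with no reference to the embedding; the appeal to Theorem~\ref{refequationsX} is only needed to justify the name ``Hilbert polynomial of $X$'' and should be presented as such.
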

Note that in  \cite[Lemma~8.3]{DHOV} we only proved $h^0(\cI_X(2)) \geq 10$ but
an example with equality was still missing. In the following theorem we are able
to prove the equality and compute these equations for a generic $S$.
\begin{theo}\label{refequationsX}
We have $h^0(\cI_X(2)) = 10$ and the ideal of $X$ in $\PP_9$ is generated by
these quadrics. Moreover these quadrics are obtained from
Proposition~\ref{refpropbeta} by chosing different values of $p_\pi$.
\end{theo}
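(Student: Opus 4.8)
The plan is to establish the three assertions of Theorem~\ref{refequationsX} in sequence: first the upper bound $h^0(\cI_X(2))\leq 10$, then that the quadrics produced by Proposition~\ref{refpropbeta} actually fill up this ten-dimensional space, and finally that these quadrics cut out $X$ scheme-theoretically (so that $\cI_X$ is generated in degree two). Combined with the lower bound $h^0(\cI_X(2))\geq 10$ already recorded in \cite[Lemma~8.3]{DHOV}, the first point gives the equality $h^0(\cI_X(2))=10$. For the upper bound I would use the Hilbert polynomial $P_X$ from the Claim: since $h^0(\OO_{\PW}(2))=55$ and the degree-two piece of the homogeneous coordinate ring of $X$ has dimension $P_X(2)=\tfrac{7\cdot 8+6\cdot 4+13\cdot 2+14}{2}=45$, any surjectivity of the restriction map $H^0(\OO_{\PW}(2))\to H^0(\OO_X(2))$ would force $h^0(\cI_X(2))\leq 55-45=10$. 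Here I must be careful: the relevant inequality is $h^0(\cI_X(2))\geq 55-h^0(\OO_X(2))$, and $h^0(\OO_X(2))\geq P_X(2)$ only gives the bound I want if I can control $h^0(\OO_X(2))$ from below, i.e. $h^0(\OO_X(2))=h^0(\OO_{\PF}(2H))=P_X(2)$, which holds once the higher cohomology of $\OO_{\PF}(2H)$ vanishes. Establishing $h^i(\OO_{\PF}(2H))=0$ for $i>0$, together with the identification $H^0(\OO_X(2))=H^0(\OO_{\PF}(2H))$ (using that the map $\PF\to X$ is defined by $|\OO_{\PF}(H)|$), is the cohomological core of this step.

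Next I would show that the subspace of $H^0(\cI_X(2))$ spanned by the quadrics coming from Proposition~\ref{refpropbeta}, as $\pi$ ranges over $\PP_3^\dual$, is all of $H^0(\cI_X(2))$. Proposition~\ref{refpropbeta} already guarantees that each plane $\pi$ contributes at least four independent quadrics in $H^0(\cI_X(2))$, all vanishing on $X$; the remaining point is that letting $\pi$ vary sweeps out the entire ten-dimensional space rather than some proper subspace. This is exactly the kind of statement that is cleanest to settle by an explicit computation on one general member of Mukai's family: I would verify in \cite[Macaulay2]{macaulay2}, for the specific $M,N$ chosen, that the span of $\bigcup_\pi H^0(\cI_{\Sigma_\pi}(2))$ has dimension $10$. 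By semicontinuity of the rank of the associated linear map over the unirational parameter space of choices $(M,N)$, the equality for one general point propagates to the generic $S$. At the same time this computation exhibits the explicit quadrics, proving the final clause of the theorem about how they are obtained.

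Finally, for scheme-theoretic generation, I would combine the dimension count with the smoothness statement \ref{refXsmooth} (invoked here as an already-established forward reference) and the Hilbert polynomial. Concretely, let $X'$ denote the scheme cut out by the ten quadrics. Then $X\subseteq X'$, and since the ten quadrics are precisely $H^0(\cI_X(2))$ I have $\cI_{X'}\subseteq\cI_X$ in degree two; to upgrade this to an equality of ideals it suffices to check that $X'$ has the same Hilbert polynomial as $X$ and is reduced, or more robustly to verify directly over the chosen example that the saturation of the ideal generated by the ten quadrics equals the ideal of $X$. Again this is most safely done by a single \cite[Macaulay2]{macaulay2} computation on the general member, after which semicontinuity yields the generic statement.

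I expect the main obstacle to be the passage from the verified-on-an-example statements to the \emph{generic} statement, rather than any one cohomological estimate. The cohomology vanishing $h^i(\OO_{\PF}(2H))=0$ and the bound $45$ on the degree-two part are essentially forced by the Claim and standard K3/projective-bundle vanishing, but the equalities $h^0(\cI_X(2))=10$ and ideal-generation are proved by exhibiting one good member and then arguing that the locus where they hold is open and nonempty in Mukai's unirational parameter space. Making that semicontinuity argument rigorous—identifying the right flat family over the parameter space so that $h^0(\cI_{X}(2))$ and the Hilbert function are upper/lower semicontinuous and agree on a dense open set—is the delicate part; the computer algebra input, while indispensable for producing the explicit quadrics, only certifies the statements at a single point.
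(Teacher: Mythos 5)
Your second and third steps are, in substance, the paper's own proof: the lower bound $h^0(\cI_X(2))\geq 10$ from \cite[Lemma~8.3]{DHOV}, a \cite[Macaulay2]{macaulay2} computation on one explicit member of Mukai's family showing that the quadrics produced by Proposition~\ref{refpropbeta} span a ten\-/dimensional space, the containment-plus-Hilbert-polynomial argument identifying $X$ with the scheme $X'$ cut out by these quadrics, and semicontinuity over the unirational parameter space of pairs $(M,N)$. Two small remarks on that part. First, reducedness of $X'$ is not needed: $X\subseteq X'$ with equal Hilbert polynomials forces $\cI_{X}/\cI_{X'}$ to have zero Hilbert polynomial, hence to vanish. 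Second, of your two options for the last step, the Hilbert-polynomial comparison (the paper's choice) is the only practical one: ``checking that the saturation equals the ideal of $X$'' presupposes computing the ideal of $X$ by elimination from $\PF\to\PW$, which the whole construction is designed to avoid, whereas $P_X$ is known a priori from the Claim, so the computer only ever manipulates the ten explicit quadrics.

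The genuine gap is in your first step, the a priori ``upper bound'' $h^0(\cI_X(2))\leq 10$. Even granting your vanishing $h^i(\OO_{\PF}(2H))=0$ for $i>0$ and the identification $h^0(\OO_X(2))=P_X(2)=45$, the restriction sequence only gives $h^0(\cI_X(2))=55-45+h^1(\cI_X(2))=10+h^1(\cI_X(2))$: the inequality $\leq 10$ is \emph{equivalent} to surjectivity of $H^0(\OO_{\PW}(2))\to H^0(\OO_X(2))$, i.e.\ to the $2$-normality of $X$, and no amount of control on $h^0(\OO_X(2))$ implies that. Equivalently, the degree-two piece of the homogeneous coordinate ring is the \emph{image} of the restriction map, and there is no reason for a Hilbert \emph{function} to agree with the Hilbert \emph{polynomial} in degree $2$ — that agreement is precisely the statement being proved. (A symptom of the shakiness here: your displayed arithmetic $\tfrac{7\cdot 8+6\cdot 4+13\cdot 2+14}{2}$ equals $60$, not $45$; the cubic printed in the Claim has a typo, and the correct polynomial $\tfrac{7m^3+6m^2+3m+4}{2}$, visible in the Section~5 example, indeed gives $P_X(2)=45$.) The repair is the one the paper actually uses, and it makes your step one superfluous: read the equality off the example — once Macaulay2 certifies there that the ten quadrics cut out $X$ and that saturating their ideal adds no new quadrics, one has $h^0(\cI_X(2))=10$ at that point — and then combine \emph{upper} semicontinuity of $h^0(\cI_X(2))$ in the flat family with the uniform lower bound of \cite{DHOV}. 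Note that this uses two semicontinuities in opposite directions (upper for $h^0(\cI_X(2))$, lower for the rank of the span of the quadrics coming from Proposition~\ref{refpropbeta}); your single appeal to ``semicontinuity of the rank'' conflates them, and only the pair of them yields both $h^0(\cI_X(2))=10$ and the ``moreover'' clause for the generic $S$.
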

\begin{proof}
From \cite[Lemma~8.3]{DHOV} that we always have $h^0(\cI_X(2))\geq
10$ so it is enough to prove the equality on an example to obtain it for the
generic $S$.

Take $\beta$ from a general  example as in Proposition~\ref{refpropbeta}. For each
triple of rows of $\beta$ we obtain $4$ quadrics containing $X$. Doing this on
an example with \cite[Macaulay2]{macaulay2} we obtain only $10$ linearly
independent quadrics. Now consider the scheme $X'$ defined by these
quadrics and compute its Hilbert polynomial with Macaulay2.  We found that $X'$
ands $X$ have same Hilbert polynomial, and by Proposition~\ref{refpropbeta}, the
scheme $X'$
contains $X$. So there is no residual scheme and $X$ is defined by these $10$
quadrics.
\end{proof}

\subsection{A canonical involution $\iX$ on $\PP_9$}
As a corollary of Theorem~\ref{refequationsX} we obtain a natural involution on
$\PP_9$ with indeterminacy locus $X$, it depends only of the polarized
K3-surface $(S,h)$, and not of the choices made in Mukai's unirationalization.
\begin{coro}\label{refdoublecover}
  Let $V_{10}=H^0(\cI_X(2))$
The linear system $|\cI_X(2)|$ defines a morphism of degree $2$ from the blowup
of $\PP_9$ in $X$ to the $9$-dimensional projective space $\PVv$.
\[
|\cI_X(2)|\colon~ \widetilde{\PP_9(X)} \xrightarrow{\quad 2:1 \quad} \PVv
\]
In the following denote by $\iX$ the rational involution on $\PW$ defined by
this morphism and by $\fX$ the quotient map.
\end{coro}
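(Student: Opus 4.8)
The plan is to realize the rational map $\phi\colon\PW\dashrightarrow\PVv$ given by the ten quadrics spanning $V_{10}=H^0(\IX(2))$ as an honest morphism after a single blow-up, and then to compute its degree purely by intersection theory, since all the numbers involved are the (deformation invariant) Chern numbers of $X\isom\PF$. By Theorem~\ref{refequationsX} the ideal $\IX$ is generated by $V_{10}$, so $X$ is exactly the base scheme of $\phi$, and by Corollary~\ref{refXsmooth} it is smooth of codimension $6$ in $\PW$. Let $\pi\colon\Bl_X\PW\to\PW$ be the blow-up, $E$ its exceptional divisor, and $\eta:=\pi^*(\text{hyperplane class of }\PW)$, whose restriction to $X\isom\PF$ is the class $H$ of Definition~\ref{refdefHh}. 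Because $\IX$ is generated in degree $2$ one has $\pi^{-1}\IX\cdot\OO=\OO(-E)$, so the strict transforms of the ten quadrics are sections of $\OO(2\eta-E)$ that generate it: the system $|2\eta-E|$ is base-point-free and resolves $\phi$ into a morphism $\fX\colon\Bl_X\PW\to\PVv$ with $\fX^*\OO_{\PVv}(1)=\OO(2\eta-E)=:\OO(D)$.

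By the projection formula $\int D^{9}=\deg\fX\cdot\deg\overline{\fX(\Bl_X\PW)}$; hence it suffices to prove $\int D^9=2$. This is nonzero, which already forces $\fX$ to be dominant onto the $9$-dimensional $\PVv$ (otherwise $D^9$ would be pulled back from a variety of dimension $\le 8$ and vanish), and then $\deg\overline{\fX(\cdots)}=\deg\PVv=1$ gives $\deg\fX=\int D^9=2$. So the whole statement reduces to the single intersection number $\int_{\Bl_X\PW}(2\eta-E)^9$.

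The core computation is this number. Expanding $(2\eta-E)^9$ and pushing the terms containing $E$ down to $E=\PP(N_{X/\PW})$, only the powers $E^{6},\dots,E^{9}$ survive (the fibre dimension is $5$), and they are controlled by the Segre classes of $N_{X/\PW}$; equivalently one has the residual-intersection formula
\[
\int D^9=2^9-\Big\{(1+2H)^9\cdot s(N_{X/\PW})\Big\}_{0},
\]
the degree of the part of the complete intersection of nine general quadrics through $X$ that is residual to $X$. To evaluate the correction term I would use $X\isom\PF=\PP(\mF)$ over the K3 surface $S$: the relative Euler sequence gives $c_1(T_X)=2H-p_S^*h$, while $p_S^*c(T_S)$ contributes $c_2(T_X)=24\,p_S^*[\mathrm{pt}]$ and a $c_3(T_X)$ with $\int_X c_3=48$. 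Feeding $c(N_{X/\PW})=(1+H)^{10}/c(T_X)$ into $s=c^{-1}$ and using the numbers coming from Mukai's invariants $c_1(\mF)=h$, $c_2(\mF)=9$, $h^2=30$ and the Grothendieck relation $H^2=p_S^*h\cdot H-9\,p_S^*[\mathrm{pt}]$ (so that $\int H^3=21$, $\int H^2 p_S^*h=\int H(p_S^*h)^2=30$, $\int(p_S^*h)^3=0$), the correction term evaluates to $510$, whence $\int D^9=512-510=2$. As these are Chern numbers constant in the family, the value obtained on one Macaulay2 example is the value for the generic $(S,h)$.

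Finally, a dominant generically $2{:}1$ morphism $\fX$ onto the normal variety $\PVv$ gives a degree-two extension $\CC(\Bl_X\PW)/\CC(\PVv)$, which in characteristic $0$ is automatically Galois; its nontrivial automorphism is a birational involution of $\Bl_X\PW$, and transporting it through the birational contraction $\pi$ yields the rational involution $\iX$ on $\PW$, with $\fX$ as the quotient map. The main obstacle is the intersection-number bookkeeping in the third paragraph: verifying that the quadrics meet $X$ with multiplicity exactly one (so that the system is $|2\eta-E|$ and not $|2\eta-mE|$), getting the Segre-class signs right, and assembling the Chern data of $\PF$ correctly, so that the delicate near-cancellation $512-510$ comes out to precisely $2$. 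By comparison the geometric inputs — base-point-freeness and smoothness — are soft, resting directly on Theorem~\ref{refequationsX} and Corollary~\ref{refXsmooth}.
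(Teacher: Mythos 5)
Your proposal is correct and follows essentially the same route as the paper: identify $X$ as the exact base scheme via Theorem~\ref{refequationsX}, resolve by blowing up, and compute the degree as $2^9$ minus the Segre-class correction $\bigl\{(1+2H)^9\cdot s(N_{X/\PW})\bigr\}_0$, which indeed equals $510$ (the paper computes $s(N)\cdot(1+2H)^9$ in the Chow ring of $\PF$ using exactly the inputs you list: $H^2=p_S^*h\cdot H-9\,p_S^*[\mathrm{pt}]$, $\int H^3=21$, $h^2=30$, $c(T_X)$ from the relative Euler sequence). Your additional remarks on base-point-freeness of $|2\eta-E|$ and the Galois-involution argument are correct elaborations of what the paper leaves implicit.
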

\begin{proof}
From Theorem~\ref{refequationsX} the base scheme of $|\cI_X(2)|$ is $X$. So it is
just a class computation in the Chow ring of $\PF$. The Segre class of the
normal bundle $N$ is
\[
s(N)=1 + (- 8H  - h) + (45h\cdot H  - 291p) - 4152p\cdot H
\]
where $p$ is the class of a point on a rational curve of $S$
and we have
\[
  s(N)\cdot (1+2H)^9 = 1 + (10H  - h) + (27h\cdot H  - 291p) + 510p\cdot H.
\]
So the projection from $\widetilde{\PP_9(X)}$ to $\PVv$ has degree $2^9-510 =2$.
\end{proof}

\section{Canonical constructions from $(S,h)$}
\subsection{Equations of $S$ in $\PP_{16}$}
  The vector bundle $F$ maps  $S$ into the Grassmannian
  $G(2,W_{10}^\dual)$ with Plücker polarization the very ample line bundle
  $\wedge^2 \mF = \cO_S(h)$.
  Denote by $\PP_{16}$ the sixteen dimensional projective
  space spanned by $S$ in the Plücker embedding.

  We have now for a general
  complex K3-surface of genus sixteen the following
\begin{theo}\label{refequationsS}
  The ideal of $S$ in $\PP_{16}$ is generated by the restriction of the Plücker
  quadrics of the Grassmannian, and  the ideal of $S$ in the Grassmannian $G(2,10)$
  is generated by the $28$ linear equations of this sixteen dimensional
  projective space. In other words, the intersection $\PP_{16}\cap G(2,10)$ has
  unexpected dimension but it still defines (as scheme) the surface $S$.
\end{theo}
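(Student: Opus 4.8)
The plan is to reduce this genericity statement to a single explicit computation on one well-chosen member of Mukai's family, and then to propagate the conclusion to the generic surface by semicontinuity, in the same spirit as the proof of Theorem~\ref{refequationsX}. Write $\PP_{44}=\PP(\wedge^2\Wdix^\dual)$ for the Plücker space, so that $G(2,10)=G(2,\Wdix^\dual)$ has dimension $16$ and codimension $28$, and recall that $\PP_{16}$, the linear span of $S$, is cut out in $\PP_{44}$ by $44-16=28$ independent linear forms. The governing numerical fact is that $\dim G(2,10)=16=\dim\PP_{16}$, so the expected dimension of $\PP_{16}\cap G(2,10)$ is $16+16-44=-12$, whereas the actual intersection contains the surface $S$. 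The excess of $14$ is the content of the phrase \emph{unexpected dimension}, and it is precisely why no general-position or Bertini argument is available to control the scheme structure of the intersection.

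First I would make the map $S\hookrightarrow G(2,10)$ explicit. By Theorem~\ref{refequationsX} the morphism $\PF\to X\subset\PW$ is a closed embedding, and the fibres of $p_S\colon\PF\to S$ are the $\PP^1$'s of $\PF$, on which $\OO_{\PF}(H)$ restricts to $\OO_{\PP^1}(1)$; hence each fibre maps to a line of $\PW=\PP(\Wdix^\dual)$, that is, to a point of $G(2,\Wdix^\dual)$, and this is exactly the Mukai/Plücker map. Feeding the explicit quadrics of $X$ from Theorem~\ref{refequationsX} into \cite[Macaulay2]{macaulay2}, I would compute the ideal of the image $S$ in the $45$ Plücker coordinates, isolate the $28$ linear forms of $H^0(\cI_S(1))$ that define $\PP_{16}$, and record the saturated ideal of $S$ in $\PP_{44}$.

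Next I would run the comparison on this example. One always has the scheme-theoretic containment $S\subseteq Z:=\PP_{16}\cap G(2,10)$, whose defining ideal is $I(\PP_{16})+I(G(2,10))$, the second summand being generated by the Plücker quadrics (classically $I(G(2,10))$ is generated by its quadratic Plücker relations). Computing in \cite[Macaulay2]{macaulay2} the Hilbert polynomial of $Z$ and checking that it equals that of $S$ (a polarised K3 of degree $30$ and sectional genus $16$) shows, together with $S\subseteq Z$, that there is no residual scheme, so $Z=S$. Saturating, the ideal of $S$ in $\PP_{44}$ equals $(\text{Plücker quadrics})+(28\text{ linear forms})$; reading this equality in the coordinate ring of $\PP_{16}$ yields the first assertion, and reading it in the homogeneous coordinate ring of $G(2,10)$ yields the second, namely $S=\PP_{16}\cap G(2,10)$ as a scheme.

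Finally I would pass from the example to the generic $S$. As $S$ varies in Mukai's unirationalised (hence irreducible) family it moves in a flat family of fixed Hilbert polynomial, so $\dim H^0(\cI_{S}(m))$ is constant in each degree; meanwhile $\dim H^0(\cI_{Z}(m))$ is \emph{lower} semicontinuous, because $I(Z)=I(\PP_{16})+I(G(2,10))$ is spanned by forms depending algebraically on the moduli. The containment $S\subseteq Z$ gives $\dim H^0(\cI_{Z}(m))\le\dim H^0(\cI_{S}(m))$ everywhere, and equality holds at the computed example in every relevant degree, so lower semicontinuity forces equality on a dense open subset of the parameter space; intersecting over the finitely many degrees up to the (uniformly bounded) regularity, one gets $Z=S$ scheme-theoretically for the generic $S$, proving both assertions. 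I expect the main obstacle to be exactly this excess dimension: since $\PP_{16}\cap G(2,10)$ is $14$ dimensions larger than expected, its reducedness and the absence of a residual component cannot be established a priori, so the entire weight of the argument rests on certifying on one explicit surface that the two Hilbert polynomials coincide, after which semicontinuity transfers the result to the generic member.
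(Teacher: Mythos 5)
Your proposal is correct and takes essentially the same approach as the paper: the paper likewise verifies the statement on a single explicit example (over $\FF_{101}$), realizes the Plücker quadrics as the $4\times 4$ Pfaffians of the skew-symmetric map $W_{10}^\dual\otimes\OO_{\PP_{16}}(-1)\to W_{10}\otimes\OO_{\PP_{16}}$ obtained after spanning $\PP_{16}$ by lines of $X$, compares the Hilbert polynomial of the resulting scheme $S'=\PP_{16}\cap G(2,10)$ with that of $S$ in Macaulay2, and concludes $S=S'$ for the generic surface by openness. Your semicontinuity paragraph merely spells out the transfer step that the paper leaves implicit.
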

\begin{proof}
  Following the construction of §2 we chose an example over $\FF_{101}$. We were able with \cite[Macaulay2]{macaulay2} to get enough lines in $X$ to span this
  $\PP_{16}$. Hence we got a skew symmetric map from $W_{10}^\dual\otimes\OO_{\PP_{16}}(-1) \to
  W_{10}\otimes \OO_{\PP_{16}}$ and let $S'$ be the scheme defined by its
  $(4\times4)$-paffians. So $S$ is included in $S'$ but we were able to compute the Hilbert polynomial of
  $S'$ and it was equal to the Hilbert polynomial of $S$. So the residual scheme
  is empty and $S=S'$. So this residual scheme is also empty for a general
  complex K3-surface of genus sixteen.
\end{proof}
\begin{rema}\label{refbettitable}
  On a finite field one can compute the resolution of $X$ in $\PP_9$ and find
  directly the previous equations from the last differentials. Over $\mathbb{Q}$
  we were also able to obtain the resolution of $X$ in less than a day on an  example with very
  small\footnote{only $0$ or $1$} coefficents in the initial choices of $M$ and
  $N$. \nbmarge{Mais cet exemple n'a pas une varieté de Peskine pour $t_2$
    lisse, et nous n'avons pas testé la lissité de $X$. De plus un exemple un
    peu plus compliqué avec uniquement des $0,1,-1$ dans les données initiales
    n'a pas reussi à trouver la resolution sur $\mathbb{Q}$. (>50GoRam et plante
    à 4 semaines)}
\end{rema}
\begin{proof}
  \ifdefined\showmacaulay

  It is enough to have an example where this resolution ends like this
\[
\dots \leftarrow \OO_{\PW}^{28}(-8)  \xleftarrow{\  d_6 \ } \OO_{\PW}^{10} (-9) \xleftarrow{\ d_7\ }
\OO_{\PW}(-10) \leftarrow 0.
\]
Note that random examples in \cite[Macaulay2]{macaulay2} gives the following Betti table for the
resolution of $\OO_X$.
\begin{verbatim}
i70 : betti resolX

             0  1  2  3  4  5  6 7
o70 = total: 1 10 43 71 56 28 10 1
          0: 1  .  .  .  .  .  . .
          1: . 10  8  .  .  .  . .
          2: .  . 35 70 36  .  . .
          3: .  .  .  1 20 28 10 1
\end{verbatim}
\fi
So  $d_7$ is just a part of  Euler sequence, and the functor ${\mathcal
  Hom}(~\cdot~, \OO_{\PW}(8))$ gives the following exact sequence
\[
\OO_{\PW}^{28}\xrightarrow{\ ^td_6\ } \Omega^1_{\PW}(2) \longrightarrow
{\mathcal Ext}^6(\OO_X,\OO_{\PW}(-8)) \to 0.
\]
So it shows
that $\PF$ is defined in the incidence variety
$\Proj(Sym(\Omega_{\PW}(2)))$ by these $28$ linear sections of the Grassmannian.
\end{proof}
\subsection{Smoothness of $X$}
\begin{coro}\label{refXsmooth}
  Still assume that $(S,h)$ is a general K3-surface of genus sixteen, then the projection $\PF \to X$ is an isomorphism.
\end{coro}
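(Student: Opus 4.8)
The plan is to give the verification a clear geometric meaning, reduce the statement to the finiteness of the structure map, and then conclude on one explicit example by semicontinuity. Recall that $\PF=\Proj(Sym(\mF))$ is a $\PP^1$-bundle over the smooth surface $S$ and that $f\colon\PF\to X\subset\PW$ is given by $\OO_{\PF}(H)$, with $H^0(\OO_{\PF}(H))=W_{10}=H^0(\mF)$. Since $\mF$ is globally generated, $W_{10}\to\mF_s$ is onto for every $s\in S$, so $\OO_{\PF}(H)$ restricts to $\OO_{\PP^1}(1)$ with two independent sections on each fibre; hence $f$ maps each fibre isomorphically onto the line $\ell_s=\PP(\mF_s^\dual)\subset\PW$ attached to the $2$-plane $\mF_s^\dual\subset W_{10}^\dual$, and $X=\bigcup_{s\in S}\ell_s$ is the union of lines associated with the classifying map $S\to G(2,W_{10}^\dual)$. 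In particular $f$ is already an isomorphism along each ruling, so it will be a global isomorphism as soon as it is finite and birational onto a normal target.

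The classifying map $S\to G(2,10)$ is a closed embedding: its Plücker polarization is $\wedge^2\mF=\OO_S(h)$, the very ample genus $16$ polarization, and indeed Theorem~\ref{refequationsS} realizes $S$ as the embedded surface $\PP_{16}\cap G(2,10)$. Using the ten explicit quadrics of Theorem~\ref{refequationsX}, I would then check three things on the example over $\FF_{101}$ (or over $\QQ$): first, that the Jacobian ideal of these quadrics cuts out the empty scheme, so that $X$ is smooth, hence normal, of dimension $3$; second, that the geometric degree of $X$, read off from the Hilbert polynomial $P_X$ of its homogeneous ideal, equals $21$, which together with $H^3=21$ (the leading coefficient of $\chi(\OO_{\PF}(mH))$) and the projection formula $H^3=\deg(f)\cdot\deg X$ forces $\deg(f)=1$, i.e. $f$ birational; third, that $f$ contracts no curve, so that $f$ is finite.

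With these facts in hand the conclusion is formal: a finite birational morphism onto a normal variety is an isomorphism (Zariski's main theorem), so $f\colon\PF\to X$ is an isomorphism on the example (and the ruling lines $\ell_s$ are then automatically pairwise disjoint). To pass to a general $(S,h)$ I would invoke semicontinuity: being a closed immersion is an open condition for the family of maps $\PF_t\to\PW$ over Mukai's irreducible unirationalization, so the isomorphism property propagates from the verified point to a dense open locus, hence to a general polarized K3 surface of genus $16$.

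The main obstacle is the finiteness of $f$, equivalently the ampleness of $\OO_{\PF}(H)$ (i.e. of $\mF$): formally this line bundle is only known to be nef and big, so its ampleness — which, combined with Zariski's main theorem, also yields the pairwise disjointness of the ruling lines — cannot be obtained a priori and is exactly what the explicit Macaulay2 check, propagated by the openness argument above, is needed to establish. Smoothness of $X$ and birationality of $f$ are by contrast routine once the ten quadrics of Theorem~\ref{refequationsX} are available.
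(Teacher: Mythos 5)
Your global skeleton (finite $+$ birational onto a normal target $\Rightarrow$ isomorphism by Zariski's main theorem, then spread out by openness of ``closed immersion'' in Mukai's family) is sound, but it breaks down exactly at the step you yourself flag as the main obstacle: finiteness of $f$. You defer it to ``an explicit Macaulay2 check that $f$ contracts no curve,'' but no such routine check is specified or available as stated: unlike smoothness of $X$, which is a Fitting-ideal/Jacobian computation, the absence of contracted curves is not the vanishing of any ideal you have produced, and you give no effective procedure for it. Moreover, your assertion that ampleness of $\OO_{\PF}(H)$ ``cannot be obtained a priori'' is incorrect, and this is precisely where the paper's proof differs: since $\mF$ is simple (Mukai), $X$ cannot be a cone, and since a general complex $(S,h)$ has $\Pic(S)=\ZZ\cdot h$, a criterion of Beauville \cite{beauvilleample} gives that $\mF$ is ample, hence $\PF\to X$ is finite --- no computation needed. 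Note also that your plan of verifying finiteness on the explicit example and propagating it cannot be rescued by a Picard-rank argument on the example itself: the examples live over finite fields, where the geometric Picard rank of a K3 is at least $2$, so the theoretical route to ampleness is only available for the general complex surface, which is exactly the case your example-plus-semicontinuity scheme does not reach directly.

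The paper's concluding step is also different from, and lighter than, your ZMT route. Instead of checking smoothness of $X$ (an expensive computation) and comparing degrees, it uses Theorem~\ref{refequationsS} to realize $\PF$ inside the point/line incidence variety $\Proj(Sym(\Omega^1_{\PP_9}(2)))$ as a subscheme cut out by \emph{linear} equations in that relative projective space; hence every fiber of $\PF\to X$ is a finite scheme defined by linear equations, i.e.\ a single reduced point, and a finite morphism all of whose fibers are reduced points is a closed immersion. If you want to salvage your plan, the honest fix is to replace the ``contracts no curve'' step either by Beauville's ampleness criterion (as the paper does), or by a genuine closed-immersion verification on the example --- unramifiedness via the rank of the differential together with emptiness of the double-point locus --- which can then legitimately be propagated by openness.
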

\begin{proof}
  Note that $F$ is simple (\cite[\S7 claim~5]{mukaigenus16}), so $X$ can't be a
  cone. From the general assumption on $(S,h)$, the  Picard group of $S$ has
  rank $1$ so $F$ is ample\footnote{Thanks to L.Manivel for pointing this
    reference to me.} by a result of Beauville (cf. \cite{beauvilleample}). \nbmarge{Sur quel
    corps \c ca reste vrai?} So the projection
  $\PF \to X$ is finite.

  Now remark from Theorem~\ref{refequationsS} that $\PF$ is
  a subscheme of the point/line incidence   variety
  $\Proj(Sym(\Omega^1_{\PP_9}(2)))$ defined by linear equations in this relative
  projective   space. So the projection $\PF \to X$  must be an isomorphism because all its fibers are finite schemes defined by linear equations.
\end{proof}
\subsection{Basic pieces}
\begin{nota}
  Let's consider the following data canonically defined by $(S,h)$.
  \begin{itemize}[label=---]
  \item Let $\NXv$ be the conormal bundle of $X$ in $\PW$.
  \item Denote by $\cP$ the bundle of principal parts of $X\subset\PW$ sitting
    in the following extension
    \[
      0 \to \Omega^1_X(H) \to \cP \to \OO_X(H) \to 0.
    \]
  \item $\VV=H^0(\cI_X(2))$ the $10$-dimensional vector space of quadrics
    containing $X$. Denote by $\gamma$ the following inclusion and $J_\gamma$
    the Jacobian map
    \[
      \gamma\colon \VV \subset S^2(\Wdix),\quad
      \VV \otimes \OO_{\PW}(-1)      \xrightarrow{\ J_{\gamma}\ } \Wdix \otimes \OO_{\PW}.
    \]
  \end{itemize}
\end{nota}
\begin{lemm}\label{reflemmaimagesdirectes}
  We have the equalities
  \begin{itemize}[label=---]
  \item $\Omega^1_S = p_{S*}(\Omega^1_X) = R^1p_{S*}(\NXv)$, $\omega_{p_S}=\OO_X(h-2H)$.
  \item The direct image $\mG := p_{S*}(\NXv(H))(h)$ is a vector bundle of rank
    $4$ on $S$ with the same\footnote{cf. Corollary~\ref{refG4mukai} for the
      stronger result that these two vector bundle are isomorphic.}\footnote{See
    also \cite[\S7.3]{kuzderivedfamilyfano} for another definition of this
    bundle on a surface with higher Picard rank.}  Chern classes as the Lazarsfeld-Mukai bundle of
    rank $4$
    \[
      c_1(\mG)=h,\ c_2(\mG)=15,\ h^0(\mG)=8,\ \chi(\sHom(\mG,\mG))=2.
    \]

  \item We have the following canonical identifications
    \[
      p_{S*}(\NXv(2H))=\VV\otimes \OO_S
    \]
  \end{itemize}
\end{lemm}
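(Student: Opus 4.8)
The plan is to establish all three identifications by pushing forward the two short exact sequences attached to $X \subset \PW$ along the projection $p_S \colon \PF \cong X \to S$ (using the isomorphism of Corollary~\ref{refXsmooth}), and systematically exploiting the fact that the fibers of $p_S$ are lines $\PP^1$ in $\PW$. First I would record the relative setting: since $\PF = \PP(\mF^\dual)$ with $\mF$ of rank $2$ and $c_1(\mF)=h$, the relative dualizing sheaf of $p_S$ is $\omega_{p_S} = \OO_X(-2H)\otimes p_S^*\det\mF = \OO_X(h-2H)$, which also gives the relative Euler sequence $0 \to \Omega^1_{p_S} \to p_S^*\mF^\dual(-H) \to \OO_X \to 0$ and the filtration relating $\Omega^1_X$ to $\Omega^1_S$ and $\Omega^1_{p_S}$. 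The first item then follows by taking $p_{S*}$ of $0\to \NXv \to \Omega^1_{\PW}|_X \to \Omega^1_X \to 0$: on each fiber $\NXv$ restricts to a bundle on $\PP^1$ with no sections (degree reasons, since $X$ is cut out by quadrics so $\NXv(2H)$ is globally generated) but with $R^1p_{S*}\NXv \cong \Omega^1_S$, while $p_{S*}\Omega^1_X = \Omega^1_S$ because $\Omega^1_{p_S}$ has no fiberwise sections or higher cohomology that survive.

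For the second item, I would twist the conormal sequence by $\OO_X(H)$ and push forward. The key point is that $\NXv(H)$ restricted to a fiber line $\PP^1$ is a bundle whose $p_{S*}$ is locally free of rank $4$ (the generic fiber dimension of $H^0(\NXv(H)|_{\PP^1})$), and the twist by $h$ is a line bundle pulled back from $S$ so commutes with $p_{S*}$. The Chern class computation $c_1(\mG)=h$, $c_2(\mG)=15$, together with $h^0(\mG)=8$ and $\chi(\mathcal{E}nd(\mG))=2$, I would obtain from Grothendieck–Riemann–Roch applied to $p_S$, using the Segre class data already computed in the proof of Corollary~\ref{refdoublecover} and the explicit classes $s(N)$ and its product with $(1+2H)^9$ recorded there. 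The cohomology $h^0(\mG)=8$ can be read off from $H^0(\NXv(2H)) = \VV$ having dimension $10$ combined with the two-step filtration coming from the conormal sequence, while $\chi(\mathcal{E}nd\,\mG)=2$ should match the expected dimension of deformations of the Lazarsfeld–Mukai bundle.

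The third item is the cleanest: twisting the conormal sequence by $\OO_X(2H)$ and pushing forward, the middle term $\Omega^1_{\PW}(2H)|_X$ has direct image controlled by the Euler sequence on $\PW$, and the key identity is simply that $H^0$ of $\NXv(2H)$ along each fiber is constant of dimension $10$ and globally equals $\VV = H^0(\cI_X(2))$; since $\VV$ is a fixed vector space (independent of the point of $S$) this forces $p_{S*}(\NXv(2H)) = \VV \otimes \OO_S$, the trivial bundle. I would verify the absence of $R^1p_{S*}$ by a fiberwise vanishing check, so that the direct image is locally free of the right rank, and that the natural evaluation $\VV\otimes\OO_S \to p_{S*}(\NXv(2H))$ is an isomorphism rather than merely an injection.

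The main obstacle I anticipate is the \emph{fiberwise splitting type} computation: to justify all the direct-image and base-change statements I must know precisely how $\NXv$, and hence its twists, restrict to a generic fiber line of $p_S \colon X \to S$, and control whether $R^1p_{S*}$ vanishes in each case. The identification of $\mG$ with the Lazarsfeld–Mukai bundle only at the level of Chern classes (the footnote defers the actual isomorphism to Corollary~\ref{refG4mukai}) suggests that the honest input is the numerical GRR computation plus cohomology-and-base-change, and the delicate part is ruling out jumping or extra higher cohomology on special fibers; I expect this to be handled by the genericity hypothesis on $(S,h)$ together with the smoothness and the explicit resolution of $\OO_X$ already available from Remark~\ref{refbettitable}.
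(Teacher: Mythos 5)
Your overall strategy --- push the conormal sequence and its twists down the $\PP^1$-fibration $p_S$, analyze everything fiberwise on lines, and use Grothendieck--Riemann--Roch for the numerics --- is the same as the paper's. But the step you yourself flag as ``the main obstacle'', the fiberwise splitting type, is precisely where the paper's proof does its real work, and your proposed fix (genericity of $(S,h)$ plus smoothness plus the resolution of $\OO_X$) is not a proof and would not work as stated: genericity of $S$ in moduli does not prevent the restriction of $\NXv(H)$ from jumping on special fibers of the \emph{fixed} fibration $X\to S$, and without constancy you get neither local freeness of $\mG$ nor the base-change identifications used afterwards. The paper pins down the splitting type at \emph{every} point of $S$ by a two-sided bound that needs no genericity: by Theorem~\ref{refequationsX} the ideal of $X$ is generated by the quadrics $\VV$, so $\NXv(H)$ is a quotient of $\VV\otimes\OO_X(-H)$, which forces every summand of $\NXv(H)|_{\ell}$ on a fiber line $\ell$ to have degree $\geq -1$; on the other hand $\NXv(H)$ is a subsheaf of $\Omega^1_{\PW}(H)|_X$, whose restriction to any line is $\OO_{\PP_1}(-1)\oplus\OO_{\PP_1}^{8}$, which forces every summand to have degree $\leq 0$. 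Since $\NXv(H)|_{\ell}$ has rank $6$ and degree $-2$, the only possibility is $\OO_{\PP_1}^{4}\oplus\OO_{\PP_1}(-1)^{2}$, for every $s\in S$; local freeness of $\mG$ of rank $4$, the vanishing of $R^1p_{S*}$, and base change then all follow at once. You mention each of the two ingredients separately (global generation coming from the quadrics; the inclusion into $\Omega^1_{\PW}$), but you never combine them into this sandwich, which is the actual content of the lemma.

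Two further gaps. First, your claim that $h^0(\mG)=8$ ``can be read off'' from $H^0(\NXv(2H))=\VV$ and the conormal filtration is unsubstantiated: $H^0(\mG)=H^0\bigl(\NXv(H)\otimes p_S^*\OO_S(h)\bigr)$ involves the twist $H+h$, not $2H$, so it is not a subquotient of $H^0(\NXv(2H))$ in any obvious way. Indeed no formal argument of this kind is available in the paper: GRR only gives $\chi(\mG)=8$, and the equality $h^0(\mG)=8$ is established as an open condition verified on an explicit example (Corollary~\ref{refG4mukai}), i.e.\ by computation. Second, for the third item, constancy of the fiberwise $h^0=10$ only gives that $p_{S*}(\NXv(2H))$ is locally free of rank $10$; it does not ``force'' triviality, as sections of a bundle need not trivialize it. The missing mechanism, which you gesture at but do not supply, is the GRR computation $c_1\bigl(p_{S*}(\NXv(2H))\bigr)=0$: since $\Pic(S)=\ZZ\cdot h$, the determinant of $p_{S*}(\NXv(2H))$ is then trivial, so the determinant of the evaluation map $\VV\otimes\OO_S\to p_{S*}(\NXv(2H))$ is a constant, and it suffices to exhibit one line $\ell_s$ along which no nonzero quadric of $\VV$ is singular to conclude that the evaluation is an isomorphism everywhere.
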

\begin{proof}
  From Theorem~\ref{refequationsX}  the vector bundle $\NXv(H)$ is a quotient of
  $\VV\otimes \OO_X(-H)$, so $R^1p_{S*}(\NXv(H))=0$ and the fiber of $\mG$ at
  any point $s$ of $S$ is $H^0(\NXv(H)_{|p_S^{-1}(s) })$. But
  $\NXv(H)_{|p_S^{-1}(s) }$ is both a
  quotient of  $\VV\otimes \OO_{\PP_1}(-1)$ and a subsheaf of the restriction of
  $\Omega^1_{\PW}(1)$ to the line $p_S^{-1}(s)$. Hence we have for any $s$ in $S$
  the isomorphism
  $$
\NXv(H)_{|p_S^{-1}(s) } \simeq \OO_{\PP_1}^4 \oplus \OO_{\PP_1}^2(-1)
  $$
and $\mG$ is locally free of rank $4$. We have also $R^1p_{S*}(\NXv(2H))=0$, so we
  can show  that the first Chern class of $p_{S*}(\NXv(2H))$ is $0$ from
  Grothendieck-Riemann-Roch. Hence  $p_{S*}(\NXv(2H))$ is isomorphic to $\VV\otimes \OO_S$.

  NB: we have $\chi(\mG)=8$, so it is an open condition to have $h^0(\mG)=8$. We
  checked this equality on an example in Corollary~\ref{refG4mukai}.
\end{proof}

\begin{lemm}[Vertex bundle]\label{refvertexbundle}
  The restriction of the Jacobian map $J_{\gamma}$ to $X$ gives the
     following exact sequences
     \begin{equation}\label{eqvertexbundle}
         0 \to p_S^*(\mG )(-2H) \to \VV \otimes \OO_{X}(-H) \to \NXv(H) \to 0.
     \end{equation}
     So for a point $x$ of $X$, the fiber $(p_S^*(\mG ))_{x}\subset \VV$ is the locus of quadrics containing
     $X$ and singular at $x$.
   \end{lemm}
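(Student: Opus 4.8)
The plan is to produce the sequence \eqref{eqvertexbundle} in three moves: realize it as the restriction of $J_\gamma$ to $X$, prove surjectivity onto $\NXv(H)$ from the fact that $X$ is cut out by quadrics, and finally identify the kernel with $p_S^*(\mG)(-2H)$ by a fibrewise analysis along the $\PP_1$-bundle $p_S\colon X\to S$ (throughout I use $\PF\simeq X$, Corollary~\ref{refXsmooth}). First I would show that $J_\gamma|_X$ factors through $\NXv(H)$. Composing $J_\gamma$ with the evaluation of the Euler sequence
\[
0\to\Omega^1_{\PW}(1)\to\Wdix\otimes\OO_{\PW}\xrightarrow{\ \mathrm{ev}\ }\OO_{\PW}(1)\to0,
\]
Euler's identity gives $\mathrm{ev}\circ J_\gamma(q)=2q$ for $q\in\VV\subset S^2\Wdix$; since $q$ vanishes on $X$, the restriction $J_\gamma|_X$ lands in $\Omega^1_{\PW}(1)|_X$. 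Feeding this into the $H$-twist of the conormal sequence $0\to\NXv\to\Omega^1_{\PW}|_X\to\Omega^1_X\to0$, the induced map to $\Omega^1_X(H)$ sends $q$ to $d(q|_X)=0$, so $J_\gamma|_X$ factors through a map $\ov J\colon\VV\otimes\OO_X(-H)\to\NXv(H)$, which is nothing but the gradient/conormal map $q\mapsto dq$.

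Next I would prove surjectivity and cut out the kernel up to a twist. By Theorem~\ref{refequationsX} the ideal $\IX$ is generated by $\VV$, so the conormal bundle $\NXv=\IX/\IX^2$ (locally free, $X$ being smooth) is generated by the classes of these quadrics; hence $\ov J$ is onto and
\[
0\to K\to\VV\otimes\OO_X(-H)\xrightarrow{\ \ov J\ }\NXv(H)\to0,\qquad K:=\ker\ov J,
\]
with $K$ locally free of rank $10-6=4$. To see that $K$ is a pullback, I restrict the $2H$-twist $0\to K(2H)\to\VV\otimes\OO_X\to\NXv(2H)\to0$ to a fibre $\ell=p_S^{-1}(s)\simeq\PP_1$; this stays exact as $\NXv(2H)$ is locally free. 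From $\NXv(H)|_\ell\simeq\OO^4\oplus\OO(-1)^2$ (Lemma~\ref{reflemmaimagesdirectes}) one computes $\deg(K|_\ell)=-8$, so $K(2H)|_\ell$ has rank $4$ and degree $0$; being a subbundle of the trivial bundle $\VV\otimes\OO_\ell$, all its summands have nonpositive degree, forcing $K(2H)|_\ell\simeq\OO_\ell^4$. By cohomology and base change on the $\PP_1$-bundle $p_S$, the sheaf $\cE:=p_{S*}(K(2H))$ is then locally free of rank $4$ and $p_S^*\cE\xrightarrow{\ \sim\ }K(2H)$, that is $K\simeq p_S^*(\cE)(-2H)$.

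Finally I would identify $\cE$ with $\mG$. Since $\OO_X(-H)$ restricts to $\OO_{\PP_1}(-1)$ on the fibres of $p_S$, we have $p_{S*}\OO_X(-H)=R^1p_{S*}\OO_X(-H)=0$, so pushing the defining sequence of $K$ forward yields $R^1p_{S*}K\simeq p_{S*}\NXv(H)=\mG(-h)$, the last equality being the definition of $\mG$ in Lemma~\ref{reflemmaimagesdirectes}. On the other hand $K\simeq p_S^*(\cE)(-2H)$, so by the projection formula together with relative duality (using $\omega_{p_S}=\OO_X(h-2H)$ from Lemma~\ref{reflemmaimagesdirectes}) one gets $R^1p_{S*}K\simeq\cE\otimes R^1p_{S*}\OO_X(-2H)\simeq\cE\otimes\OO_S(-h)=\cE(-h)$. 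Comparing the two descriptions gives $\cE\simeq\mG$, whence $K\simeq p_S^*(\mG)(-2H)$ and the sequence \eqref{eqvertexbundle}. The fibre statement is then immediate: $K_x=\ker\bigl(\VV\xrightarrow{J_\gamma(x)}\Wdix\bigr)$ consists of those $q\in\VV$ whose gradient vanishes at $x$, i.e. the quadrics containing $X$ and singular at $x$.

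The twist bookkeeping in the factoring step and the appeal to Theorem~\ref{refequationsX} for surjectivity are routine; the real content, and the step I expect to be the main obstacle, is showing that $K$ is pulled back from $S$. This hinges on the fibrewise triviality of $K(2H)$, whose degree count relies on the precise splitting type $\NXv(H)|_\ell\simeq\OO^4\oplus\OO(-1)^2$ established in Lemma~\ref{reflemmaimagesdirectes}; coarser information about $\NXv(H)$ on the fibres would not suffice to pin down $K$.
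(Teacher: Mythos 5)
Your overall architecture (factor $J_\gamma|_X$ through $\NXv(H)$, get surjectivity from Theorem~\ref{refequationsX}, descend the kernel $K$ by a fibrewise argument, then identify the descended bundle with $\mG$ via $R^1p_{S*}$) is reasonable, and your first and last steps are correct. But the central step --- fibrewise triviality of $K(2H)$ --- has a genuine gap, in two layers. First, a twist error: from $0\to K\to \VV\otimes\OO_X(-H)\to \NXv(H)\to 0$, twisting by $2H$ gives middle term $\VV\otimes\OO_X(H)$, so on a fibre $\ell=p_S^{-1}(s)$ the bundle $K(2H)|_\ell$ is a subbundle of $\OO_\ell(1)^{10}$, not of a trivial bundle, and the ``all summands of nonpositive degree'' argument does not apply to it. Second, and more seriously, the correctly twisted version does not close: what embeds in the trivial bundle $\VV\otimes\OO_\ell$ is $K(H)|_\ell$, of rank $4$ and degree $-4$, and summand degrees that are $\leq 0$ and sum to $-4$ do not force the splitting $\OO_\ell(-1)^4$. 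Indeed the splitting type of the kernel of a surjection $\OO_\ell^{10}\twoheadrightarrow \OO_\ell(1)^4\oplus\OO_\ell^2$ is not determined by the splitting type of the target: there exist such surjections with kernel $\OO_\ell^3\oplus\OO_\ell(-4)$ (use five trivial summands to surject onto $\OO_\ell(1)^4$ and project the rest onto $\OO_\ell^2$). So your closing claim that the precise splitting type $\NXv(H)|_\ell\simeq\OO^4\oplus\OO(-1)^2$ suffices is itself incorrect; no purely numerical fibre data can pin down $K$.

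The missing ingredient is precisely the third item of Lemma~\ref{reflemmaimagesdirectes}: the natural map $\VV\otimes\OO_S\to p_{S*}(\NXv(2H))$ is an isomorphism. By cohomology and base change (legitimate since $R^1p_{S*}(\NXv(2H))=0$) this says that on every fibre the evaluation $\VV\to H^0(\NXv(2H)|_\ell)$ is an isomorphism, whence $h^0(K(H)|_\ell)=h^1(K(H)|_\ell)=0$ from the long exact sequence of $0\to K(H)|_\ell\to\VV\otimes\OO_\ell\to\NXv(2H)|_\ell\to 0$; a rank-$4$ bundle on $\PP_1$ with no cohomology must be $\OO_\ell(-1)^4$, and then your base-change descent and your identification $\cE\simeq\mG$ through $R^1p_{S*}K\simeq p_{S*}(\NXv(H))=\mG(-h)$ go through. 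This isomorphism is exactly what the paper's proof turns on: it pushes forward the $H$-twisted sequence, observes that $p_{S*}(N_4(H))$ and $R^1p_{S*}(N_4(H))$ are the kernel and cokernel of that isomorphism, hence vanish, and then concludes $N_4(H)\simeq \omega_{p_S}(H)\otimes p_S^*(\mG(-h))=p_S^*(\mG)(-H)$ via the relative Beilinson spectral sequence. Once repaired, your route (fibrewise splitting plus base change) would be a legitimate alternative to the Beilinson argument, but as written the decisive step fails.
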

   \begin{proof}
     First denote by $N_4$ the kernel of the right side map in this sequence. We have to show that
     $N_4\simeq p_S^*(\mG(-2H))$.

     By definition of $\mG$ in Lemma~\ref{reflemmaimagesdirectes} we have
     $R^1p_{S*}(N_4) = \mG(-h)$. So
     the relative Beilinson spectral sequence
     \begin{equation}
       \label{eqrelativebeilinson}
       E_1^{-p,q} (\cF)= p_S^*(R^qp_{S*}(\cF(-pH)))\otimes \omega_{P_S}(pH)
       \Rightarrow \cF~ \mbox{ (in degree $0$)}
     \end{equation}
       gives for $\cF=N_4(H)$ the following exact sequence
     \[
       p_S^*\left(p_{S*}(N_4(H))\right) \to N_4(H) \to \omega_{p_S}(H) \otimes p_S^*(\mG(-h))\to p_S^*\left(R^1p_{S*}(N_4(H))\right).
     \]
     But we have $0=p_{S*}(N_4(H))= R^1p_{S*}(N_4(H))$  because they are the
     kernel and cokernel of the isomorphism $\VV \otimes\OO_S \to
     p_{S*}(\NXv(2H))))$. So $N_4$ is isomorphic to $p_S^*(\mG(-2H))$.
   \end{proof}

   \begin{coro}\label{refOmegaS}
      From this relative construction we get
 the exact sequence
    \[ 0 \to \mG \otimes \mF^\dual \to \VV\otimes \OO_S \to \Omega^1_S(h )\to 0.
    \]
  and a natural identification \[\VV=H^0(\Omega^1_S(h)).\]
\end{coro}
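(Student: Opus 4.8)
The plan is to obtain the sequence by pushing the vertex bundle sequence~(\ref{eqvertexbundle}) forward along the $\PP_1$-bundle $p_S\colon \PF\to S$. Concretely, I would twist~(\ref{eqvertexbundle}) by $\OO_X(h-H)$ to get
\[
0 \to p_S^*(\mG)(h-3H) \to \VV \otimes \OO_X(h-2H) \to \NXv(h) \to 0,
\]
and apply $Rp_{S*}$. On each fibre $p_S^{-1}(s)\simeq \PP_1$ the first two terms restrict to $\OO(-3)$ and $\OO(-2)$, so $p_{S*}$ kills them, and $p_{S*}\NXv=0$ as well: the fibrewise splitting $\NXv(H)_{|p_S^{-1}(s)}\simeq \OO^4\oplus\OO(-1)^2$ of Lemma~\ref{reflemmaimagesdirectes} gives $\NXv_{|p_S^{-1}(s)}\simeq \OO(-1)^4\oplus\OO(-2)^2$, which has no sections. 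Since $R^2p_{S*}=0$ on a $\PP_1$-bundle, the long exact sequence collapses to a three-term sequence of first direct images.

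The key computational input is then the identification of these three $R^1p_{S*}$ terms, for which I would use relative Serre duality $R^1p_{S*}\cF\simeq (p_{S*}(\cF^\dual\otimes\omega_{p_S}))^\dual$ together with $\omega_{p_S}=\OO_X(h-2H)$ and the projection formula. First, $R^1p_{S*}\NXv(h)=\OO_S(h)\otimes R^1p_{S*}\NXv=\Omega^1_S(h)$ by Lemma~\ref{reflemmaimagesdirectes}. Next, since the sections of $\OO_{\PF}(H)$ are $W_{10}=H^0(\mF)\neq0$ (whereas $\mF^\dual$ has no sections) one has $p_{S*}\OO_X(H)=\mF$, so duality yields $R^1p_{S*}\OO_X(-3H)=(p_{S*}\OO_X(H+h))^\dual=\mF^\dual(-h)$ and $R^1p_{S*}\OO_X(-2H)=(p_{S*}\OO_X(h))^\dual=\OO_S(-h)$. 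Feeding these back through the projection formula turns the left term into $\mG(h)\otimes\mF^\dual(-h)=\mG\otimes\mF^\dual$ and the middle term into $\VV\otimes\OO_S(h)\otimes\OO_S(-h)=\VV\otimes\OO_S$, giving exactly
\[
0 \to \mG\otimes\mF^\dual \to \VV\otimes\OO_S \to \Omega^1_S(h)\to 0.
\]

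For the identification $\VV=H^0(\Omega^1_S(h))$ I would take global sections. Because $H^1(\OO_S)=0$, the long exact sequence reads
\[
0\to H^0(\mG\otimes\mF^\dual)\to \VV\to H^0(\Omega^1_S(h))\to H^1(\mG\otimes\mF^\dual)\to 0,
\]
so the claim is equivalent to $H^0(\mG\otimes\mF^\dual)=H^1(\mG\otimes\mF^\dual)=0$. The first vanishing is $\Hom(\mF,\mG)=0$, which follows by slope-stability: for generic $(S,h)$ both $\mF$ and $\mG$ are stable, of slopes $\mu(\mF)=h^2/2=15$ and $\mu(\mG)=h^2/4=15/2$, and a nonzero map would force a subsheaf of $\mG$ of slope $\geq 15$, contradicting semistability. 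The hard part is the second vanishing $H^1(\mG\otimes\mF^\dual)=0$ (equivalently, via the sequence above and the stability of $T_S\simeq\Omega^1_S$, the vanishing $H^1(\Omega^1_S(h))=0$). A Riemann--Roch computation gives $\chi(\mG\otimes\mF^\dual)=10$, so with $H^0=0$ one has $h^1=h^2-10$ and, by Serre duality, $h^2=\hom(\mG,\mF)$; thus the vanishing is equivalent to $\hom(\mG,\mF)=10$. Since $\mF$ is rigid and $\mG$ satisfies $\chi(\sHom(\mG,\mG))=2$, both Mukai vectors are spherical, and I expect the required vanishing to reflect the rigidity of the pair $(\mF,\mG)$; matching the computational method of the paper, the cleanest route is to verify $H^1(\mG\otimes\mF^\dual)=0$ on one explicit example and conclude for generic $S$ by upper semicontinuity of $h^1$.
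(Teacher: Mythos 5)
Your proposal is correct, and its overall architecture is the same as the paper's: push the relative construction down to $S$, identify the three direct-image terms, and reduce the identification $\VV=H^0(\Omega^1_S(h))$ to the two vanishings $h^0(\mG\otimes\mF^\dual)=h^1(\mG\otimes\mF^\dual)=0$, certified on an explicit example plus semicontinuity. The differences are in the mechanics, and both are sound. For the exact sequence, the paper runs the relative Beilinson spectral sequence~(\ref{eqrelativebeilinson}) a second time, on $\cF=\NXv(H)$, to get the filtration $0 \to p_S^*(\mG(-h)) \to \NXv(H) \to \omega_{p_S}(H)\otimes p_S^*(\Omega^1_S)\to 0$, then twists by $\OO_X(H)$ and pushes forward; you instead twist the already-established sequence~(\ref{eqvertexbundle}) by $\OO_X(h-H)$ and identify the three $R^1p_{S*}$ terms by relative Serre duality. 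This is equivalent, and arguably cleaner in that it reuses Lemma~\ref{refvertexbundle} rather than rerunning Beilinson; your duality bookkeeping ($R^1p_{S*}\OO_X(-3H)=\mF^\dual(-h)$, $R^1p_{S*}\OO_X(-2H)=\OO_S(-h)$) checks out. For the vanishings, the paper certifies both at once by computing $h^1(p_S^*\mGdual(H))=h^2(p_S^*\mGdual(H))=0$ in Macaulay2 (these equal $h^1$ and $h^0$ of $\mG\otimes\mF^\dual$ via pushforward and Serre duality on $S$); you replace the $h^0$ check by a slope-stability argument ($\mu(\mF)=15>15/2=\mu(\mG)$ kills $\Hom(\mF,\mG)$), which is more conceptual but imports the stability of $\mG$ from Corollary~\ref{refG4mukai} — a forward reference, though a harmless one, since the paper's own proof of this corollary already forward-references \ref{refG4mukai} to define $p_S^*\mGdual$. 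For the remaining vanishing $h^1(\mG\otimes\mF^\dual)=0$ you land exactly where the paper does: one explicit example and upper semicontinuity; your reformulations ($\chi(\mG\otimes\mF^\dual)=10$, equivalence with $\dim\Hom(\mG,\mF)=10$ and with $h^1(\Omega^1_S(h))=0$ via stability of $T_S$) are all correct, so nothing in your argument is a gap — it simply makes explicit which half of the computer check admits a conceptual replacement.
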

\begin{proof}
  From spectral sequence~(\ref{eqrelativebeilinson}) with $\cF=\NXv(H)$ we get
  with Lemma~\ref{reflemmaimagesdirectes}   the following exact sequence
  \[
      0 \to p_S^*(\mG(-h))) \to \NXv(H) \to \omega_{p_S}(H)\otimes
      p_S^*(\Omega^1_S)\to 0.
  \]
  It gives the exact sequence after tensorization by $\OO_X(H)$ and push
  forward. Defining in \cite[Macaulay2]{macaulay2} $p_S^*\mGdual$ as the sheaf
  on $X$  image of the map ${}^t s_\gamma$ (cf. Corollary~\ref{refG4mukai}) we
  computed\footnote{(took a bit less than 5h each)}
  $h^2(p_S^*\mGdual(H))=0$ and $h^1(p_S^*\mGdual(H))=0$. So for $S$ generic, we have the
  following vanishing $h^0(\mG \otimes
  \mF^\dual )=h^1(\mG \otimes \mF^\dual)=0$ and  $\VV=H^0(\Omega^1_S(h))$.
\end{proof}
\begin{lemm}[multiple structure bundle]
  The following exact sequence
  \[
    0 \to \mG(-h) \to \Wdix \otimes \OO_S \to p_{S*}(\cP) \to 0
  \]
  describes the fiber $\mG_s$ over a point $s$ of $S$ as the equations of a
  linear space spanned in $\PW$ by a multiple structure on the line $p_S^{-1}(s)$.
\end{lemm}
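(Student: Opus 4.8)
The plan is to realise the sequence as the direct image of the standard comparison sequence of principal parts for the embedding $X\subset\PW$, and then to read off the fibre over a point $s\in S$ by cohomology and base change. Throughout I identify $\PF$ with $X$ via Corollary~\ref{refXsmooth} and write $\ell:=p_S^{-1}(s)\subset X$ for the line over $s$.

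First I would write down the jet comparison sequence. Since $\Wdix=H^0(\PW,\OO_{\PW}(1))$, the bundle of first principal parts $P^1_{\PW}(\OO_{\PW}(1))$ is canonically the Euler bundle $\Wdix\otimes\OO_{\PW}$ (its defining extension, with graded pieces $\Omega^1_{\PW}(1)$ and $\OO_{\PW}(1)$, is the Euler sequence, and the $1$-jet map $\Wdix\otimes\OO_{\PW}\to P^1_{\PW}(\OO_{\PW}(1))$ is an isomorphism). Restricting to $X$ and composing with the surjection onto $P^1_X(\OO_X(H))=\cP$ gives the ``restrict a linear form to $X$ and take its $1$-jet'' map, whose kernel is the twisted conormal bundle:
\[
0 \to \NXv(H) \to \Wdix\otimes\OO_X \to \cP \to 0 .
\]
Now I would apply $p_{S*}$. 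From the proof of Lemma~\ref{reflemmaimagesdirectes} one has $R^1p_{S*}(\NXv(H))=0$ and $p_{S*}(\NXv(H))=\mG(-h)$, while $p_{S*}\OO_X=\OO_S$ gives $p_{S*}(\Wdix\otimes\OO_X)=\Wdix\otimes\OO_S$. The long exact sequence of direct images therefore degenerates to
\[
0 \to \mG(-h) \to \Wdix\otimes\OO_S \to p_{S*}(\cP) \to 0 ,
\]
the asserted sequence (a rank count shows $p_{S*}\cP$ has rank $6$).

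For the geometric meaning I would pass to the fibre over $s$. Being a sequence of vector bundles on $X$, the comparison sequence is locally split, so its restriction to $\ell$ stays exact; taking cohomology and using $H^1(\ell,\NXv(H)|_{\ell})=H^1(\OO_{\PP_1}^4\oplus\OO_{\PP_1}^2(-1))=0$ (the splitting of $\NXv(H)|_{\ell}$ computed in Lemma~\ref{reflemmaimagesdirectes}) yields
\[
0 \to H^0(\ell,\NXv(H)|_{\ell}) \to \Wdix \to H^0(\ell,\cP|_{\ell}) \to 0 .
\]
By cohomology and base change $H^0(\ell,\NXv(H)|_{\ell})=\mG(-h)_s$, so this is a $4$-dimensional subspace of $\Wdix$ (twisting by $h$ only rescales, hence cuts out the same linear subspace as $\mG_s$). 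The middle arrow is the $1$-jet of $w|_X$ along $\ell$, so its kernel consists exactly of those linear forms $w$ whose restriction to $X$ vanishes to order $\geq 2$ along $\ell$, i.e. of $H^0(\PW,\cI_{\ell^{(1)}}(1))$, where $\ell^{(1)}$ is the first infinitesimal neighbourhood of $\ell$ in $X$ (defined by $\cI_{\ell/X}^2$). These are precisely the linear equations of the span $\langle\ell^{(1)}\rangle=\PP_5\subset\PW$ of this multiple structure on the line $\ell=p_S^{-1}(s)$, which is the claimed description of $\mG_s$.

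The homological part (the comparison sequence and the pushforward) is routine. The step I expect to require the most care is the fibrewise identification: verifying that the middle arrow is literally the $1$-jet map, so that its kernel is the order-$2$ vanishing ideal along $\ell$, and confirming that base change applies — which it does, thanks to $R^1p_{S*}(\NXv(H))=0$. Once $\ker=H^0(\PW,\cI_{\ell^{(1)}}(1))$ is established, the clean reformulation ``$\mG_s=$ equations of the linear span of the doubled line'' follows immediately.
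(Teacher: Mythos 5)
Your proof is correct and takes essentially the same route as the paper: the principal-parts comparison sequence $0\to\NXv(H)\to\Wdix\otimes\OO_X\to\cP\to 0$ pushed forward by $p_{S*}$, with exactness coming from $R^1p_{S*}(\NXv(H))=0$ and $p_{S*}(\NXv(H))=\mG(-h)$ established in Lemma~\ref{reflemmaimagesdirectes}. The paper's proof is exactly this in two lines, and your extra verifications (the Euler-bundle identification of $P^1_{\PW}(\OO_{\PW}(1))$, base change along $\ell=p_S^{-1}(s)$, and reading the kernel as the linear equations of the span of the first infinitesimal neighbourhood of $\ell$ in $X$) only make explicit what the paper leaves implicit.
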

\begin{proof}
  Note that $\cP$ is the bundle of principal parts of $X$ so we have the exact sequence
  \[
    0\to \NXv(H) \to \Wdix \otimes \OO_X \to \cP \to 0,
  \]
  and the Lemma follows after applying $p_{S*}$ to it.
\end{proof}
\subsection{Syzygies of $X$ and the trivector $t_2$}
\begin{prop}\label{refsgamma}
  The eight dimensional vector space $V_8=H^0(\mG)$ is the space of linear
  syzygies of the quadrics containing $X$.

  Denote by $\sigma_\gamma \colon ~ V_8 \xhookrightarrow{\quad   \sigma_\gamma\quad } \VV \otimes \Wdix$ the
  associated inclusion and by $s_\gamma$
  \[
    V_8 \otimes \OO_{\PW} \xrightarrow{\ s_\gamma \ } \VV\otimes \OO_{\PW}(1).
  \]
  Moreover, the image of the restriction of $s_\gamma$ to $X$ is $p_S^*(\mG)$.
\end{prop}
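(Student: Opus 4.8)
The plan is to derive everything from the vertex bundle sequence~(\ref{eqvertexbundle}) by taking global sections after a twist, and then to match the resulting kernel with the linear syzygies.

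Concretely, I would first twist~(\ref{eqvertexbundle}) by $\OO_X(2H)$, obtaining
\[
0\to p_S^*(\mG)\to\VV\otimes\OO_X(H)\xrightarrow{\ \psi\ }\NXv(3H)\to 0,
\]
where $\psi$ is the twisted restriction to $X$ of the Jacobian map $J_{\gamma}$. Using $X\simeq\PF$ (Corollary~\ref{refXsmooth}) we have $H^0(\OO_X(H))=H^0(\OO_{\PF}(H))=H^0(\mF)=\Wdix$, and since $p_S$ is a $\PP_1$-bundle with $p_{S*}\OO_X=\OO_S$ we have $H^0(p_S^*\mG)=H^0(\mG)=V_8$. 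Left exactness of $H^0$ then yields
\[
0\to V_8\xrightarrow{\ \sigma_\gamma\ }\VV\otimes\Wdix\xrightarrow{\ \phi\ }H^0(\NXv(3H)),
\]
which simultaneously produces the claimed inclusion $\sigma_\gamma$ (hence the bundle map $s_\gamma$) and identifies $V_8$ with $\ker\phi$.

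The next step is to show that $\ker\phi$ is exactly the space of linear syzygies $\ker\bigl(\VV\otimes\Wdix\xrightarrow{\mathrm{mult}}S^3(\Wdix)\bigr)$. Composing $\psi$ with the inclusion $\NXv(3H)\hookrightarrow\Wdix\otimes\OO_X(2H)$ coming from the conormal and Euler sequences, the map $\phi$ sends $\sum_k q_k\otimes\ell_k$ to the element of $\Wdix\otimes H^0(\OO_X(2H))$ whose $i$-th slot is the restriction to $X$ of $\sum_k(\partial_i q_k)\ell_k$. If $\sum_k q_k\ell_k=0$, differentiating gives $\sum_k(\partial_i q_k)\ell_k=-\sum_k(\partial_i\ell_k)q_k\in\VV$ for every $i$; as the quadrics of $\VV$ vanish on $X$, this shows every linear syzygy lies in $\ker\phi$. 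The reverse inclusion is not formal, since $\ker\phi$ only forces $\sum_k(\partial_i q_k)\ell_k\in\VV$ rather than $\sum_k q_k\ell_k=0$, so I would finish by a dimension count: $\dim V_8=h^0(\mG)=8$ by Lemma~\ref{reflemmaimagesdirectes}, while the number of minimal linear syzygies is $8$ from the computed resolution, so the two $8$-dimensional spaces coincide. This proves the first assertion and shows $\sigma_\gamma$ is the linear-syzygy inclusion.

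For the final assertion, the restriction $s_\gamma|_X\colon V_8\otimes\OO_X\to\VV\otimes\OO_X(H)$ is, by the previous step, the evaluation of the global sections $V_8=H^0(p_S^*\mG)$ of the subsheaf $p_S^*\mG\subset\VV\otimes\OO_X(H)$ in the twisted sequence above; in particular its image is contained in $p_S^*\mG$. Equality of the image sheaf with $p_S^*\mG$ is precisely the statement that $V_8$ generates $p_S^*\mG$. I expect this to be the main obstacle: the vertex sequence only presents $p_S^*\mG$ as a \emph{subsheaf} of a globally generated bundle, so global generation does not come for free. I would obtain it from the global generation of $\mG$ itself, either via its identification with the Lazarsfeld-Mukai bundle (Corollary~\ref{refG4mukai}) or directly from the fibrewise splitting $\NXv(H)|_{p_S^{-1}(s)}\simeq\OO_{\PP_1}^4\oplus\OO_{\PP_1}^2(-1)$ of Lemma~\ref{reflemmaimagesdirectes}, after which $p_S^*\mG$ is generated by $H^0(p_S^*\mG)=V_8$ and the image equals $p_S^*\mG$.
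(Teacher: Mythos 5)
Your first two steps are sound and broadly parallel to the paper's argument: taking $H^0$ of the twisted vertex-bundle sequence~(\ref{eqvertexbundle}) identifies $H^0(\mG)$ with a subspace of $\VV\otimes\Wdix$; the inclusion of the linear syzygies into that kernel is correct (in fact it is immediate, without differentiating: the map $\VV\otimes\OO_X(H)\to\NXv(3H)$ is reduction modulo $\cI_X^2$, so a relation $\sum_k q_k\ell_k=0$ maps to zero on the nose); and the closing dimension count ($8$ linear syzygies from the computed resolution on one side, $h^0(\mG)=8$ from Lemma~\ref{reflemmaimagesdirectes} on the other) is acceptable in the same example-plus-semicontinuity style the paper itself uses throughout.

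The gap is exactly where you predicted it, and neither of your two proposed fixes works. (a) Invoking the Lazarsfeld--Mukai identification of Corollary~\ref{refG4mukai} is circular: that corollary comes after, and is proved by means of, Proposition~\ref{refsgamma} --- its proof uses the map $s_\gamma$ and the very same $4\times 4$-minor computation, and its logical structure is ``$\mG$ is globally generated (from \ref{refsgamma}) and stable, hence isomorphic to the Lazarsfeld--Mukai bundle'', so global generation is an \emph{input} to the LM identification, not a consequence of it. (b) The fibrewise splitting $\NXv(H)|_{p_S^{-1}(s)}\simeq\OO_{\PP_1}^4\oplus\OO_{\PP_1}^2(-1)$ only yields local freeness of $\mG$ of rank $4$; the restriction of $p_S^*\mG$ to a ruling is $\OO_{\PP_1}^4$ for \emph{any} rank-$4$ bundle on $S$, so this cannot detect whether the $8$-dimensional space $V_8=H^0(\mG)$ surjects onto each $4$-dimensional fibre $\mG_s$, which is the actual content of the assertion. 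That surjectivity is a nontrivial open condition (emptiness of the rank-$\le 3$ degeneracy locus of a $4\times 8$ evaluation matrix over the threefold $X$), and the paper establishes it in the only way available at this point of the logical chain: a Macaulay2 verification on an explicit example that the $4\times 4$ minors of $s_\gamma$ define the empty set on $X$, combined with openness in moduli. Your proof must either import that computation or supply an independent argument for global generation of $\mG$; as written it has no valid source for it.
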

\begin{proof}
  Denote by $V_8$ the linear syzygies of the quadrics containing $X$. From
  sequence~(\ref{eqvertexbundle}), the image of the restriction of
  $s_\gamma$ to $X$ is a subsheaf of $p_S^*(\mG)$. So for any point $s$ of $S$, the restriction of
  $s_\gamma$ to the line $p_S^{-1}({s})$ factors through
  $$
V_8 \otimes \OO_{\PP_1} \rightarrow \OO_{\PP_1}\otimes \mG_s = \OO_{\PP_1}^4 .
$$
But such a map is either surjective or of rank at most $3$. As we were able to
check on an example that the $4\times 4$-minors of $s_\gamma$
defines the emptyset on $X$, the image of the restriction of $s_\gamma$ to $X$
is $p_S^*(\mG)$. So  we have $V_8=H^0(\mG)$ and the vector bundle $\mG$ is globally generated.
\end{proof}
\begin{prop}\label{refphiV8}
  For $S$ general, there is  skew-symmetric
  isomorphism $$\phi: V_8^\dual \to V_8$$ such that the vector bundle $\mGdual$ is
  isotropic for $\phi$. This isomorphism is  uniquely defined up to a constant
  factor and can be computed as the kernel of $s_\gamma \otimes s_\gamma$.
\end{prop}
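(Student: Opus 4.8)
The plan is to realize $\phi$ as the comparison isomorphism between the evaluation sequence of $\mG$ and its dual. By Proposition~\ref{refsgamma} the bundle $\mG$ is globally generated with $V_8=H^0(\mG)$, so there is an evaluation sequence
\[
 0 \to M_\mG \to V_8\otimes\OO_S \to \mG \to 0
\]
with $M_\mG$ of rank $4$. Using $c_1(\mG)=h$, $c_2(\mG)=15$ from Lemma~\ref{reflemmaimagesdirectes} and $h^2=30$, a Chern class computation gives $c_1(M_\mG)=-h$, $c_2(M_\mG)=15$, so $M_\mG$ has the same Mukai vector $(4,-h,4)$ as $\mGdual$, with $\langle v,v\rangle=h^2-2\cdot4\cdot4=-2$. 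First I would prove $M_\mG\simeq\mGdual$: both are rigid bundles with this primitive spherical Mukai vector, and since a general $S$ has Picard rank $1$ the $h$-stable bundle with a fixed $(-2)$-vector is unique, so it is enough to check that $\mG$ and $M_\mG$ are $h$-stable.

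Granting $M_\mG\simeq\mGdual$, dualizing the evaluation sequence yields a second extension
\[
 0 \to \mGdual \to V_8^\dual\otimes\OO_S \to \mG \to 0
\]
with the same sub and quotient. Any isomorphism of the two sequences restricts on the constant middle terms to a linear isomorphism $\phi\colon V_8^\dual\xrightarrow{\ \sim\ }V_8$, and since it then carries the sub $\mGdual\subset V_8^\dual$ onto the sub $\mGdual\subset V_8$, which are mutual annihilators for the tautological pairing, $\mGdual$ is automatically isotropic for $\phi$. Whether such an isomorphism exists is governed by the extension class $e\in\Ext^1(\mG,\mGdual)=H^1(\mGdual\otimes\mGdual)$: under $\mGdual\otimes\mGdual=S^2\mGdual\oplus\wedge^2\mGdual$ the dual sequence has class $\tau(e)$, where the factor–swap $\tau$ acts by $+1$ on $S^2\mGdual$ and $-1$ on $\wedge^2\mGdual$. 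A comparison isomorphism exists precisely when $\tau(e)$ is proportional to $e$, i.e. when $e$ is pure, and it is symmetric if $e\in H^1(S^2\mGdual)$ and skew if $e\in H^1(\wedge^2\mGdual)$. Thus the entire statement reduces to the vanishing $H^1(S^2\mGdual)=0$, which forces $e$ into the skew summand and yields existence of $\phi$ together with its skew-symmetry in one stroke.

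Uniqueness up to a scalar then follows from simplicity: $\Hom(\mGdual,\mGdual)=\Hom(\mG,\mG)=\CC$, so the isomorphism of sequences is determined up to the $\CC^*\times\CC^*$ of automorphisms of the sub and the quotient, i.e. up to a global constant, and non-degeneracy is automatic since $\phi$ is an isomorphism of the middle terms. To match the computational description I would identify this $\phi$ with the kernel of $s_\gamma\otimes s_\gamma$: the map $s_\gamma\colon V_8\otimes\OO_\PW\to\VV\otimes\OO_\PW(1)$ induces a linear map $V_8\otimes V_8\to\VV\otimes\VV\otimes S^2\Wdix$ sending $t$ to $(s_\gamma\otimes s_\gamma)(t)$ read as a section of $\VV\otimes\VV\otimes\OO_\PW(2)$; the self-dual comparison lands exactly in its kernel, and conversely any kernel element is skew (a symmetric part would give a symmetric comparison, excluded by $H^1(S^2\mGdual)=0$) and defines a morphism of the two evaluation sequences. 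Concretely I would check on one explicit example over a finite field with \cite[Macaulay2]{macaulay2} that this kernel is one–dimensional with non-degenerate generator.

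The main obstacle is the two cohomological inputs that make $\phi$ canonical: the identification $M_\mG\simeq\mGdual$, resting on spherical-uniqueness for the Mukai vector $(4,-h,4)$ and on stability of $\mG$ and $M_\mG$, and the vanishing $H^1(S^2\mGdual)=0$, which simultaneously gives existence and skew-symmetry. Both are open conditions, so they are most safely established on a single general member with \cite[Macaulay2]{macaulay2} and then propagated to the general $S$ by upper semicontinuity of $\dim\ker$, exactly as in the proofs of Theorem~\ref{refequationsX} and Corollary~\ref{refOmegaS}.
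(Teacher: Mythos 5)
Your reduction hinges on the dictionary between the symmetry type of the extension class $e\in\Ext^1(\mG,\mGdual)=H^1(\mGdual\otimes\mGdual)$ and the symmetry type of the comparison isomorphism $\phi$, and you have this dictionary backwards. Dualizing an extension negates the cocycle as well as transposing it: if the middle term is glued by $\bigl(\begin{smallmatrix}1 & g_{ij}\\ 0 & 1\end{smallmatrix}\bigr)$, its dual is glued by $\bigl(\begin{smallmatrix}1 & -{}^tg_{ij}\\ 0 & 1\end{smallmatrix}\bigr)$, so the dual of your sequence has class $-\tau(e)$, not $\tau(e)$. Chasing the scalars $(\alpha,\beta)$ on sub and quotient through this sign, one finds the opposite correspondence: a \emph{skew} comparison isomorphism exists exactly when $e$ lies in the \emph{symmetric} summand $H^1(S^2\mGdual)$, and a class in $H^1(\wedge^2\mGdual)$ produces a \emph{symmetric} $\phi$ (this is the classical statement that symplectic extensions of $E$ by $E^\dual$ with $E^\dual$ Lagrangian are classified by $H^1(S^2E^\dual)$, orthogonal ones by $H^1(\wedge^2E^\dual)$). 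The rank-one toy case already refutes your version: for $0\to\OO_{\PP_1}(-1)\to H^0(\OO_{\PP_1}(1))\otimes\OO_{\PP_1}\to\OO_{\PP_1}(1)\to 0$ the class is nonzero and automatically symmetric (the sub has rank one, so its $\wedge^2$ vanishes), yet the comparison map $H^0(\OO_{\PP_1}(1))^\dual\to H^0(\OO_{\PP_1}(1))$ is the symplectic form spanning $\wedge^2 H^0(\OO_{\PP_1}(1))$, i.e.\ skew.

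This inversion is fatal as written, because the vanishing you reduce everything to, $H^1(S^2\mGdual)=0$, is false. Indeed $e\neq 0$ (otherwise $\mGdual$ would be a direct summand, hence a quotient, of $V_8\otimes\OO_S$, impossible since $c_1(\mGdual)\cdot h<0$), so $H^1(S^2\mGdual)$ and $H^1(\wedge^2\mGdual)$ cannot both vanish; and the one that does vanish is $H^1(\wedge^2\mGdual)$, which by Serre duality and stability of $\mG$ is equivalent to the equality $h^0(\wedge^2\mG)=\chi(\wedge^2\mG)=27$ that the paper checks with \cite[Macaulay2]{macaulay2}. Consequently $e$ is a nonzero class in $H^1(S^2\mGdual)$ --- consistent with the correct dictionary and with the skewness of $\phi$. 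Your program does work once you swap the target vanishing to $H^1(\wedge^2\mGdual)=0$: that forces the nonzero class $e$ to be pure symmetric, hence a comparison isomorphism exists, is skew, and is unique up to scalar by simplicity. But note that this corrected input is exactly the paper's computation in disguise: the paper deduces existence directly from $\dim\wedge^2V_8=28>27=h^0(\wedge^2\mG)$ (a nonzero skew form in the kernel of $\wedge^2V_8\to H^0(\wedge^2\mG)$ is precisely one for which $\mGdual$ is isotropic), and gets uniqueness and nondegeneracy from the explicit kernel of $s_\gamma\otimes s_\gamma$ on an example; moreover the paper's existence step does not need your additional inputs $M_\mG\simeq\mGdual$ (spherical uniqueness) and stability of $M_\mG$, which in your route are further unproved claims resting on computation.
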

\begin{proof}
  First remark that $\chi(\wedge^2 \mG)=27$ so it is an open condition to have
  $h^0(\wedge^2 \mG)=27$. From Lemma~\ref{refvertexbundle} we can define the
  sheaf $p_S^*(\mG)$  as the image of $s_\gamma$. So we were able to
  compute\footnote{it took between 24h/48h} this equality on an example
  in \cite[Macaulay2]{macaulay2}. So for $S$ general there is  a skew-symmetric map $\phi:
  V_8^\dual \to V_8$ such that $\mGdual$ is isotropic for $\phi$. But on the
  previous example we computed the kernel of $s_\gamma \otimes s_\gamma$, and
  found only a one dimensional space spaned by a skew-symmetric isomorphism. So
  for $S$ general, $\phi$ is an isomorphism.
\end{proof}






%
%
%









Note that we have from the previous sequences:
\begin{coro}\label{refG4mukai}
  The restriction of $s_\gamma$ and $J_\gamma$ to $X$ gives the following exact sequences
\[\begin{tikzcd}[cramped,sep=tiny]
	0 & {p_S^*(\mGdual)} & {V_8\otimes \OO_X} && {\VV\otimes \OO_X(H)} && {\Wdix \otimes \OO_X(2H)} & {\cP(2H)} & 0 \\
	&&& {p_S^*(\mG)} && {\NXv(3H)}
	\arrow[from=1-8, to=1-9]
	\arrow["{J_\gamma}", from=1-5, to=1-7]
	\arrow["{s_\gamma}", from=1-3, to=1-5]
	\arrow[from=1-7, to=1-8]
	\arrow[hook, from=2-6, to=1-7]
	\arrow[two heads, from=1-5, to=2-6]
	\arrow[two heads, from=1-3, to=2-4]
	\arrow[hook, from=2-4, to=1-5]
	\arrow[from=1-1, to=1-2]
	\arrow[from=1-2, to=1-3]
      \end{tikzcd}\]
   For $S$ general the vector bundle $\mG$ is globally generated and stable, so
   it is isomorphic to the Lazarsfeld-Mukai bundle of rank $4$. \end{coro}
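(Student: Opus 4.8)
The plan is to read off the two exact sequences as a twist‑and‑splice of facts already proved, and then to upgrade the numerical coincidence of Lemma~\ref{reflemmaimagesdirectes} to an honest isomorphism by establishing stability of $\mG$.

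First I would assemble the left half of the diagram. By Proposition~\ref{refsgamma} we have $V_8=H^0(\mG)$, and the restriction $s_\gamma|_X$ is the $p_S$‑pullback of the tautological evaluation $H^0(\mG)\otimes\OO_S\to\mG$, with image $p_S^*(\mG)$. Hence $s_\gamma|_X$ is the pullback of the Lazarsfeld--Mukai sequence $0\to M\to V_8\otimes\OO_S\to\mG\to0$, where $M$ is the kernel bundle of rank $4$ with $c_1(M)=-h$. The skew‑symmetric isomorphism $\phi$ of Proposition~\ref{refphiV8}, for which $\mGdual$ is Lagrangian in $V_8\otimes\OO_S$, identifies this evaluation sequence with its own dual, whence $M\simeq\mGdual$; pulling back along $p_S$ produces the left column $0\to p_S^*(\mGdual)\to V_8\otimes\OO_X\to p_S^*(\mG)\to0$.

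The right half is Lemma~\ref{refvertexbundle} spliced with the multiple‑structure lemma. Twisting the vertex sequence by $\OO_X(2H)$ gives
\[ 0\to p_S^*(\mG)\to \VV\otimes\OO_X(H)\xrightarrow{\,J_\gamma\,}\NXv(3H)\to 0, \]
so that $J_\gamma|_X$ is surjective onto $\NXv(3H)$ with kernel $p_S^*(\mG)$; twisting the principal‑parts sequence $0\to\NXv(H)\to\Wdix\otimes\OO_X\to\cP\to0$ by $\OO_X(2H)$ then embeds $\NXv(3H)$ into $\Wdix\otimes\OO_X(2H)$ with cokernel $\cP(2H)$. Exactness of the long row at $\VV\otimes\OO_X(H)$ is automatic, since the image of $s_\gamma|_X$ equals $p_S^*(\mG)$, which is the kernel of $J_\gamma|_X$; the two factorizations drawn through $p_S^*(\mG)$ and $\NXv(3H)$ then give the whole diagram.

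It remains to identify $\mG$ with the rank‑$4$ Lazarsfeld--Mukai bundle. Global generation is already Proposition~\ref{refsgamma}, and the Chern data of Lemma~\ref{reflemmaimagesdirectes} give the Mukai vector $v(\mG)=(4,h,4)$, which is primitive with $v^2=h^2-32=-2$; this is exactly the spherical class of the rank‑$4$ Lazarsfeld--Mukai bundle, for which the moduli space $M_h(v)$ is a single reduced point. I expect stability of $\mG$ to be the main obstacle: once it holds, $\mG$ represents the unique point of $M_h(v)$ and is therefore isomorphic to that bundle. Since $v$ is a primitive $(-2)$‑vector, $h$‑semistability and $h$‑stability coincide and semistability is open in families; and because $\Pic(S)=\ZZ h$, destabilization is tested only by saturated subsheaves $\mathcal A\subset\mG$ of rank $s\in\{1,2,3\}$ with $c_1(\mathcal A)=kh$ and $4k\ge s$, equivalently by nonzero sections of $\mG(-h)$, $\wedge^2\mG(-h)$, or $\wedge^3\mG(-h)\simeq\mGdual$ (ampleness of $h$ reducing every case to $k=1$). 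I would settle these vanishings in the computational style of the rest of the paper, checking them on one explicit Macaulay2 example and invoking semicontinuity together with the primitivity of $v$ to deduce stability, and hence the isomorphism, for general $(S,h)$.
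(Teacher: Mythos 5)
Your proposal is correct, and its first half is essentially the paper's route made more explicit: you splice the twisted vertex-bundle sequence of Lemma~\ref{refvertexbundle} with the twisted principal-parts sequence, and identify the kernel of $s_\gamma|_X$ with $p_S^*(\mGdual)$ via the Lagrangian property of $\phi$ from Proposition~\ref{refphiV8}; the paper instead defines $p_S^*(\mGdual)$ as the image of ${}^t s_\gamma|_X$ after re-checking in Macaulay2 that the $4\times 4$ minors of $s_\gamma$ cut out the empty set on $X$ — the same constant-rank statement, with your symplectic identification left implicit. Where you genuinely diverge is the stability step. You run the standard saturated-subsheaf test for $\Pic(S)=\ZZ h$, which requires three vanishings, $h^0(\mG(-h))=h^0(\wedge^2\mG(-h))=h^0(\mGdual)=0$, each to be certified on a computer example; the $\wedge^2$ check is a computation the paper never performs. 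The paper sidesteps it with a slicker argument on the quotient side: any quotient $B$ of $\mG$ of rank $r_B\leq 3$ is again globally generated, so $r_B-1$ general sections exhibit a rank-one globally generated quotient of $\mG$ with the same first Chern class $a\cdot h$, and the single vanishing $h^0(\mGdual)=0$ forces $a\geq 1$, whence $\mu(B)\geq h^2/3 > h^2/4=\mu(\mG)$. So the paper buys minimal computation (one vanishing, already needed elsewhere) at the price of the quotient trick, while your version is the textbook argument at the price of a heavier, though in principle routine, Macaulay2 check — if that $\wedge^2$ computation were infeasible, your proof would stall where the paper's does not. The endgame is identical in both: $v(\mG)=(4,h,4)$ is a primitive class with $v^2=-2$, so a stable bundle with these invariants is the unique point of its moduli space and must be the rank-$4$ Lazarsfeld--Mukai bundle; the paper leaves this implicit in its appeal to Lemma~\ref{reflemmaimagesdirectes}, while you spell it out.
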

 \begin{proof}
   We were able to compute with \cite[Macaulay2]{macaulay2} an example defined
   over $\mathbb{F}_{101}$ and check that the ideal
   generated by the $4\times 4$ minors of $s\gamma$ defines the empty set on
   $X$. So the cokernel of ${}^t s_{\gamma}$ is locally free of rank $4$ on $X$
   hence it is $p_S^*(\mG)$. Hence we  defined $p_S^*(\mGdual)$ as the image
   of the restriction of ${}^t s_{\gamma}$ to $X$ and compute $h^0(\mGdual)=0$ on this example.
   So we have $h^0(\mGdual)=0$ for a general $S$ with $Pic(S)=h\cdot \ZZ$.

   Now consider a quotient $B$ of rank $r_B$ with $1\leq r_B\leq 3$ of $\mG$. It is
   also globally generated so  $r_B-1$ sections of $B$ give a quotient of
   rank $1$ of $\mG$ with first Chern class $c_1(B)=a\cdot h$. But
   $h^0(\mGdual)=0$ implies $a>0$ hence we can't have $\frac{c_1(B)\cdot
     h}{r_B}<\frac{h^2}{4}$ and $\mG$ is stable. From
   Lemma~\ref{reflemmaimagesdirectes} the vector bundle $\mG$ must be isomorphic to the
   Lazarsfeld-Mukai bundle of rank $4$ on $S$.
 \end{proof}

\begin{defi}[Congruence of lines]\label{refcongruenceduntriv}
A congruence of lines is a subvariety of a Grassmannian of lines $G$ of
dimension $\frac{\dim(G)}{2}$. Its order is the number of lines through a
general point of the projective space.

Recall that a trivector gives a section of the dual of the tautological quotient
bundle of $G$ tensored  by $\OO_G(1)$.  As soon as it has the expected
dimension, its zero locus is a congruence of order
$1$ and defines the trivector up to a non zero constant factor.
\end{defi}

\begin{prop}[Invariant lines and $t_2$]\label{refdeftriv2}
  Let $B_{\iX}\subset G(2,\Wdix^\dual)$ be the closure of the set of
  $\iX$-invariant lines of $\PW$ not intersecting $X$.

  Then $B_{\iX}$ is a congruence of order $1$ and there is a trivector $t_2 \in
  \bigwedge^3 \VV$ such that the order one congruence of lines $\Btriv\subset G(2,\VVv)$ defined
  by $t_2$ (cf. Def~\ref{refcongruenceduntriv})
  consists in the image by $\fX\colon\begin{tikzcd}[column sep=3.em]
	\PW & \PVv
	\arrow["{2\colon 1}", dashed, from=1-1, to=1-2]
\end{tikzcd}$ of lines of  $B_{\iX}$.

  Note that $t_2$ is uniquely defined up to a non zero constant and that the
  $8$-dimentional varieties $B_{\iX}$   and $\Btriv$ are birational.
\end{prop}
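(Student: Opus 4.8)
The plan is to characterize the $\iX$-invariant lines purely geometrically, establish the order-one property upstairs and downstairs by a two-point argument, and only then match the resulting congruence with a trivector.

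\emph{Order one for $B_{\iX}$.} Since $\fX$ is given by the net $\VV=H^0(\cI_X(2))$ of quadrics (Corollary~\ref{refdoublecover}), its restriction to a line $\ell$ disjoint from $X$ has image of degree at most two: a smooth conic when $\fX|_\ell$ is injective, a line when $\fX|_\ell$ is two-to-one, a point otherwise. As $\iX$ is the covering involution of $\fX$, such a line is $\iX$-invariant if and only if $\fX|_\ell$ is two-to-one onto a line, equivalently the two points of a general fibre $\fX^{-1}(\fX(w))=\{w,\iX(w)\}$ both lie on $\ell$. This gives the order at once: through a general $w\in\PW$ any invariant line must contain both $w$ and $\iX(w)$, hence equals the single line $\overline{w\,\iX(w)}$; and this line is indeed invariant, because $\fX$ identifies its two distinct points $w,\iX(w)$ and so cannot embed it as a conic. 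Counting in the incidence variety $\{(w,\ell):w\in\ell\in B_{\iX}\}$, which dominates $\PW$ generically one-to-one, yields $\dim B_{\iX}=9-1=8=\tfrac12\dim G(2,\Wdix^\dual)$, so $B_{\iX}$ is a congruence of order one.

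\emph{Pushing forward to $\PVv$.} I would then transport the family by $\ell\mapsto\fX(\ell)$. By the previous step each $\ell\in B_{\iX}$ maps to a genuine line $m=\fX(\ell)$, and since $\ell\to m$ already realizes the double cover over $m$ one has $\ell=\fX^{-1}(m)$ set-theoretically; hence $\ell\mapsto\fX(\ell)$ is birational onto its image $\cB'\subset G(2,\VVv)$, with inverse $m\mapsto\overline{\fX^{-1}(m)}$. Running the two-point argument downstairs shows $\cB'$ is again of order one: through a general $v\in\PVv$ the unique line of $\cB'$ is $\fX(\overline{\fX^{-1}(v)})$, since any other such line $m'$ would satisfy $\fX^{-1}(v)\subset\fX^{-1}(m')$ and therefore pull back to the same invariant line. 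In particular $B_{\iX}$ and $\cB'$ are birational eight-folds.

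\emph{Identifying $\cB'$ with a trivector congruence.} The crux is to realize $\cB'$ as the zero locus of a trivector section in the sense of Definition~\ref{refcongruenceduntriv}. Writing a general line of $B_{\iX}$ as $[a+sb]$, its image plane is $P=\langle Q_a,Q_{ab},Q_b\rangle\subset\VVv$, where $Q_a,Q_b$ represent $\fX(a),\fX(b)$ and $Q_{ab}$ is the polar term, the invariance condition being exactly $\dim P=2$. I would build the candidate $t_2\in\bigwedge^3\VV$ from the linear-syzygy data, combining the inclusion $\sigma_\gamma\colon V_8\hookrightarrow\VV\otimes\Wdix$ of Proposition~\ref{refsgamma} with the skew isomorphism $\phi\colon V_8^\dual\xrightarrow{\sim}V_8$ of Proposition~\ref{refphiV8}, and then verify that the associated section of $\cQ^\dual\otimes\OO(1)$ on $G(2,\VVv)$ vanishes along every plane $P$, i.e. $\Btriv=Z(s_{t_2})\supseteq\cB'$. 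Since both are irreducible congruences of order one and dimension eight, the inclusion forces $\Btriv=\cB'$; uniqueness of $t_2$ up to a scalar is then the last clause of Definition~\ref{refcongruenceduntriv}, and composing the birational map $B_{\iX}\to\cB'=\Btriv$ of the previous step gives the asserted birationality.

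I expect the genuine difficulty to lie in this last paragraph: proving that the eight-dimensional family $\cB'$ is cut out by a \emph{single} trivector rather than being merely an abstract order-one congruence. This is the linear-algebra identification tying $\sigma_\gamma$ and the skew form $\phi$ to the planes $P$, and—consistently with the computational style of the rest of the paper—it is most safely confirmed by an explicit Macaulay2 check on one example before invoking the dimension-and-order argument to globalize to the generic $(S,h)$.
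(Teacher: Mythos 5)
Your handling of the congruence itself is correct and genuinely different from the paper's: uniqueness of the invariant line through a general point is argued identically, but for existence the paper does not use your two-point/degree argument — it exhibits the invariant line $l_p$ through a general $p\in\PW$ explicitly as the vanishing locus of $\pi\circ s_\gamma$, where $\pi\colon \VV\to \VV/H^0(\cI_{X\cup p}(2))$ is the quotient defining $\fX(p)$. Your route is more elementary, but note one factual slip in your push-forward step: it is \emph{not} true that $\ell=\fX^{-1}(m)$ set-theoretically; the preimage of $\fX(\ell)$ contains, besides $\ell$, a residual curve contracted by $\fX$ (in fact four rational cubics six-secant to $X$, as the remark following Proposition~\ref{refdeftriv2} shows). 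Birationality survives for the paper's reason — $\ell$ is the unique non-contracted component of $\fX^{-1}(\fX(\ell))$ — but your proposed inverse map must be corrected accordingly.

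The genuine gap is your third step, which is the actual content of the proposition and which you leave as a plan plus a Macaulay2 check. First, the deferral cannot be repaired by the ``check one example, then globalize'' scheme you invoke elsewhere in the paper: the existence of a nonzero trivector vanishing on all planes of $\cB'$ is a rank-drop, hence \emph{closed}, condition on $(S,h)$, so verifying it on one example does not give it for the generic surface; only a functorial construction does. Second, the missing idea is precisely the paper's relation (\ref{eq:noyaucongruence}), namely $\fX(p)\in\ker({}^t s_{\gamma,p})$ for general $p$ and, since $l_p$ is cut out by $\pi\circ s_\gamma$, even $\fX(p)\in\ker({}^t s_{\gamma,y})$ for every $y\in l_p$. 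This one relation delivers everything your sketch needs: (i) by Proposition~\ref{refphiV8} the composition $s_\gamma\circ\phi\circ{}^t s_\gamma$ vanishes on $X$, so its coefficients are quadrics vanishing on $X$, i.e.\ elements of $\VV$ by Theorem~\ref{refequationsX}, whence $t_2\in\bigwedge^2\VV\otimes\VV$ with $t_{2,\fX(p)}=s_{\gamma,p}\circ\phi\circ{}^t s_{\gamma,p}$; (ii) the condition $\fX(p)\in\ker(t_{2,\fX(p)})$ at the general point of $\PVv$ forces full alternation, $t_2\in\bigwedge^3\VV$, and uniqueness up to scalar then follows from Definition~\ref{refcongruenceduntriv}; (iii) for general $y\in l_p$ both $\fX(y)$ and $\fX(p)$ lie in $\ker({}^t s_{\gamma,y})$, so $t_2(\fX(y),\fX(p),-)=0$ and the image line indeed lies in $\Btriv$. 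By replacing the syzygy construction of the invariant line with the two-point argument, you discarded exactly the tool on which the trivector identification runs; to complete your proof along your own lines you would have to show that $\overline{w\,\iX(w)}$ coincides with the zero locus of $\pi\circ s_\gamma$, i.e.\ re-derive (\ref{eq:noyaucongruence}), at which point you are back to the paper's argument.
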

\begin{proof}
  Let $p$ be a general point in $\PW$. First remark that there is at most one
  element of $B_{\iX}$ through $p$ because this line contains $p$ and $\iX(p)$.

Let $V_{9,p}$ be the nine dimensional vector space $H^0(\cI_{X\cup p}(2))$ and
$\pi$ the quotient map $\pi\colon~ \VV \to \VV/V_{9,p}$ defining the point
$\fX(p)$ of $\PVv$.

Let $l_p$ be the line defined by the vanishing of $\pi \circ s_{\gamma}$. So the
restriction $s_{\gamma,p}$ of $s_\gamma$ to $p$ maps $V_8$ to $V_{9,p}$ and among the nine
quadrics containing $X$ and $p$ eight contain also the line $l_p$. Hence the
restriction to $l_p$ of the linear system $|\cI_X(2)|$  is just a double cover
of a line $\fX(l_p) \subset \PVv$. So $l_p$ is an $\iX$-invariant line
containing $p$ and we have the following
\begin{equation}
  \label{eq:noyaucongruence}
p\mbox{ general in } \PW \Rightarrow \fX(p) \in \ker(^t s_{\gamma,p})
\end{equation}
Now remind from Proposition~\ref{refphiV8}   that  the
  following composition is zero on $X$
\[\begin{tikzcd}
	{\VVv\otimes \OO_{X}} & {V_8^\dual \otimes \OO_{X}(H)} & {V_8\otimes\OO_{X}(H)} & {\VV\otimes\OO_{X}(2H)}
	\arrow["{^t s_\gamma}", from=1-1, to=1-2]
	\arrow["\phi", from=1-2, to=1-3]
	\arrow["{s_\gamma}", from=1-3, to=1-4]
	\arrow["0"', curve={height=24pt}, from=1-1, to=1-4]
\end{tikzcd}.\]
So the coefficients of the skew-symmetric map $s_\gamma \circ \phi \circ
{}^t s_\gamma$ are elements of $\VV$ because it is defined as
$H^0(\IX(2))$. Hence it defines an element $t_2$ of
$(\bigwedge^2 \VV ) \otimes \VV$.

Now remind from~\ref{eq:noyaucongruence} that for a general point $p$ in $\PW$ the point $\fX(p)$ is general
in $\PVv$ such that
\[
\fX(p) \in \ker(t_{2,\fX(p)}),\quad  t_{2,\fX(p)} \in \Hom(\VVv,\VV),~
{}^t t_{2,\fX(p)} = - t_{2,\fX(p)}  .
\]
So $t_2\in \bigwedge^3 \VV$ and is uniquely defined up to a non zero constant
(cf. Def~\ref{refcongruenceduntriv}).

Let $l\subset \PW$ be a general member of the congruence $B_{\iX}$ and $d\subset
\PVv$ its image by $\fX$. The residual
of $l$ in the curve $\fX^{-1}(d)$ must be contracted by $\fX$, because the
restriction of $\fX$ to $l$ is already of degree $2$ over $d$. Hence the
rational map from $B_{\iX}$ to $\Btriv$ induced by $\fX$ has degree $1$.
\end{proof}
%

\nbmarge{NB: $\chi(S^2(\mG))=35$ but $\Ext^1(\mG,\mG^\dual)\neq 0$ so  there
  is no contradiction,
  $\mG$ is not isotropic for a quadratic form.}



\subsection{Geometry of $t_2$ and the double cover of $\PVv$}
\begin{defi}
  Peskine variety of the trivector $t_2\in\bigwedge^3\VV$
  $$\Ypesk := \left\{ [V_1] \in \PVv,\  \rank(t_2(V_1,-,-)) \leq 6\right\} $$
  and the congruence of Definition~\ref{refcongruenceduntriv}
  $$\Btriv := \left\{ [V_2] \in G(2,\VVv)   ,\  t_2(V_2,V_2,-)=0 \right\} $$
  consists in lines of $\PVv$ quadrisecant to $\Ypesk$.
\end{defi}
\begin{prop}[Peskine variety and six secant $\PP_3$]\label{refpeskinetdeux} \
  \begin{itemize}[label=---]
  \item For $S$ general the Peskine variety $\Ypesk$ is smooth irreducible of
    dimension $6$. The preimage of a general point of $\Ypesk$ is a smooth
    rational cubic curve $C\subset \PW$. The intersection $X\cap C$ consist in  $6$
    distinct points defined by the restriction of the quadrics containing $X$ to
    the linear space spanned by $C$.
  \item Any $\PP_3\subset \PW$ intersecting $X$ in a scheme $Z_6$ of length six
    with $h^1(\cI_{Z_6}(2))=0$ is sent to $\PVv$ by $\fX$ to a three
  dimensional projective subspace of $\PVv$ in the kernel of  $t_2$. The map
  $\fX$ contracts the rational cubic curve $C$ containing $Z_6$ to a point
  of the Peskine  variety $\Ypesk\subset \PVv$.
\end{itemize}
\end{prop}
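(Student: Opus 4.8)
The plan is to set up a dictionary between points of $\Ypesk$ and the rational cubic curves of $\PW$ contracted by the double cover $\fX$ of Corollary~\ref{refdoublecover}, and to read the two bullets off as the two directions of this dictionary. I would establish the second (constructive) bullet first, since it is the more conceptual one, and then feed it into the first together with one explicit computation.

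So take a $\PP_3\subset\PW$ with $X\cap\PP_3=Z_6$ of length $6$ and $h^1(\cI_{Z_6}(2))=0$, and restrict the ten quadrics to $\PP_3$,
\[
  \rho\colon\ \VV=H^0(\IX(2))\longrightarrow H^0(\cO_{\PP_3}(2)).
\]
Its image lies in $H^0(\cI_{Z_6|\PP_3}(2))$, which has dimension $10-6=4$ because $Z_6$ imposes independent conditions on quadrics; checking that $\rho$ is onto this space, its kernel $V_6:=\ker\rho=\{q\in\VV:\ q|_{\PP_3}=0\}$ is six-dimensional, so $\fX$ maps $\PP_3$ onto $\PP(V_6^\perp)\cong\PP_3\subset\PVv$. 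Through the six points $Z_6$ passes a unique rational normal cubic $C$; since a twisted cubic lies on a net of quadrics, exactly three of the four quadrics of $H^0(\cI_{Z_6|\PP_3}(2))$ vanish on $C$ and only one, say $Q$, survives, so $\fX$ contracts $C$ to a single point $q$. Moreover the common zero locus in $\PP_3$ of the four quadrics equals $C\cap Q$, the intersection of a cubic with a quadric surface, hence the six points $Z_6$; this is exactly the description of $X\cap\PP_3=X\cap C$ asserted in the first bullet. It remains to identify $V_6^\perp\subset\VVv$ as totally isotropic for $t_2$, i.e. $t_2(V_6^\perp,V_6^\perp,-)=0$; writing $t_2=s_\gamma\circ\phi\circ{}^t s_\gamma$ as in Proposition~\ref{refdeftriv2}, this forces $V_6^\perp\subseteq\ker\bigl(t_2(v,-,-)\bigr)$ for every $v\in V_6^\perp$, whence $\rank\bigl(t_2(v,-,-)\bigr)\le 6$ there and $\PP(V_6^\perp)\subseteq\Ypesk$; in particular $q\in\Ypesk$.

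For the first bullet I would argue in reverse, using one explicit computation as elsewhere in the paper. For $S$ general one verifies on an example that $\Ypesk$ is smooth, irreducible of dimension $6$, that over a general point $q$ the skew form $t_2(q,-,-)$ has corank exactly $4$, and that the fibre $\fX^{-1}(q)$ is a smooth rational normal cubic $C$. Setting $\PP_3=\langle C\rangle$, the contraction of $C$ together with the fact that $C$ is a twisted cubic runs the analysis of the previous paragraph in reverse: the restricted quadrics through $C$ form the net, so $X\cap C=X\cap\PP_3=Z_6$ is six distinct points cut on $\langle C\rangle$ by the quadrics of $\VV$, and the second bullet applied to this $\PP_3$ confirms that $\fX(\PP_3)=\PP(V_6^\perp)$ lies in the kernel of $t_2$ with $q=\fX(C)\in\Ypesk$. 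Smoothness, irreducibility and the value $6$ of the dimension, the corank-$4$ genericity, and the fact that the generic fibre degenerates no further than a smooth cubic all pass from the example to the general $(S,h)$ by openness and semicontinuity over Mukai's unirational family, since $\Ypesk$ and the family of contracted cubics vary in flat families.

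The main obstacle is the isotropy step, i.e. tying the purely local determinantal condition $\rank(t_2(v,-,-))\le 6$ to the global geometry of the quadrics $\VV$ restricted to $\PP_3$: one must deduce $t_2(V_6^\perp,V_6^\perp,-)=0$ directly from the syzygy description $t_2=s_\gamma\circ\phi\circ{}^t s_\gamma$ of Proposition~\ref{refdeftriv2}, using that the eight linear syzygies in $s_\gamma$ degenerate along the contracted cubic $C$. A secondary difficulty is the transversality needed to guarantee that the generic fibre of $\fX$ over $\Ypesk$ stays a smooth twisted cubic and that $X\cap C$ remains reduced of length six; both are of the kind that the paper settles by a single Macaulay2 computation combined with openness.
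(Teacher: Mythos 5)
Your setup for the second bullet (restricting $\VV$ to the $6$-secant $\PP_3$, the dimension count $10-6=4$, the net of quadrics through the twisted cubic $C$ inside the web through $Z_6$, and the resulting contraction of $C$) agrees with the paper, and so does your plan for the first bullet (one explicit example plus openness over Mukai's unirational family). But the step you dismiss with ``this forces $V_6^\perp\subseteq\ker(t_2(v,-,-))$'' is precisely the mathematical content of the proposition, and it is not a formal consequence of the formula $t_2=s_\gamma\circ\phi\circ{}^t s_\gamma$: contracting that formula at a point $\fX(p')$ only yields $\ker(t_2(\fX(p'),-,-))\supseteq\ker({}^t s_{\gamma,p'})$, which for a general $p'$ is merely $2$-dimensional. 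You correctly flag this as ``the main obstacle,'' but a proof must close it, and your proposal does not. The paper closes it geometrically: the restriction of $\iX$ to the $6$-secant $\PP_3$ is the classical involution defined by the web of quadrics through six points, whose invariant lines are exactly the chords of the twisted cubic $C$ through $Z_6$. By Proposition~\ref{refdeftriv2}, the $\fX$-image of any $\iX$-invariant line lies in the congruence $\Btriv$, i.e.\ is an isotropic line for $t_2$. Every chord $l$ of $C$ meets $C$ in two points, so its image $\fX(l)$ is a line through $\fX(C)$; since the chords sweep out $\PP_3$, these image lines sweep out $\fX(\PP_3)$, and isotropy of each of them gives $t_2(\fX(C),v,-)=0$ for every $v$ in the span of $\fX(\PP_3)$. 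Hence $\ker(t_2(\fX(C),-,-))$ is at least $4$-dimensional, i.e.\ $\fX(C)\in\Ypesk$, and $\fX(\PP_3)$ sits inside $\PP(\ker(t_2(\fX(C),-,-)))$. (One can also run your syzygy route, but it needs the same geometric input: for general $p'\in\PP_3$ the image of $s_{\gamma,p'}$ is the space of quadrics through $X\cup l_{p'}$ with $l_{p'}$ the invariant line through $p'$; identifying $l_{p'}$ with the chord of $C$ through $p'$ and applying B\'ezout on $C$ shows these quadrics contain $C$, which is what kills $t_2(\fX(p'),\fX(C),-)$. Without the invariant-lines identification, nothing ``forces'' anything.)

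Two smaller points. First, for the smoothness of $\Ypesk$ your plan to ``verify on an example'' is the paper's plan too, but the paper warns that one cannot in practice compute the rank of the Jacobian of the determinantal equations, and that the singular locus of $\Ypesk$ could a priori exceed the rank-$\le 4$ locus; it resorts to the Benedetti--Song criterion to make the example check feasible. Second, the surjectivity of your restriction map $\rho$ onto $H^0(\cI_{Z_6|\PP_3}(2))$ deserves a word (e.g.\ if the image had dimension $\le 3$, the restricted quadrics would cut out a scheme of length $\ge 8$ or of positive dimension in $\PP_3$, contradicting $X\cap\PP_3=Z_6$ and Theorem~\ref{refequationsX}); this is minor and glossed over by the paper as well, but the isotropy step above is a genuine gap.
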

\begin{proof}
  The smoothness and dimension property is an open condition on the trivector
  defining a Peskine variety, but we have to show that this openset is not empty
  for trivectors $t_2$ defined by a general K3 surface of genus $16$. Note that
  the singular locus of $\Ypesk$ might be bigger than the locus where $t_2$ has
  rank at most $4$ and that we can't expect to compute the rank of its jacobian.
  But fortunately, we were able to check the smoothness of the Peskine variety
  $\Ypesk$ on an example using the Benedetti-Song criterion
  (cf. \cite[Lemma~2.8]{benedettisongDV} or \cite[Lemma~4.1]{songspecialDV}) so
  for a general $S$ the Peskine variety $\Ypesk$ is irreducible and smooth of
  dimension $6$.

  Conversely, let $Z_6$ be the intersection of $X$ with a $6$-secant
  $\PP_3$. Remind that $X$ is
  defined by quadrics and that $Z_6=X\cap \PP_3$. So the restriction of $\fX$ to
  this $\PP_3$  is defined by the linear system $|\cI_{Z_6}(2)|$. This is a
  rational map of degree $2$ to from $\PP_3$ to the following three dimensional
  projective space $\PP(\ker(t_2(\fX(C),-,-)))$, where $C$ is the cubic curve
  contracted by $|\cI_{Z_6}(2)|$. Indeed, a general line $l$ bisecant to $C$ is invariant by
  $i_X$ so $\fX(l)\in \Btriv$. But all these lines contain the point $\fX(C)$ so
  they are all in the kernel of $\ker(t_2(\fX(C),-,-)$, hence $\fX(C)\in\Ypesk$.
\end{proof}


\begin{rema}
  Let $l\subset \PW$ be a general $i_X$-invariant line. Then the premimage by
  $\fX$ of the line $\fX(l)$ is
  $$
\fX^{-1}(\fX(l)) = l \cup C_1 \cup C_2 \cup C_3 \cup C_4,
$$
where $(C_i)_{1\leq i \leq 4}$ are disjoint smooth rational cubic curves
six-secant to $X$ and bisecant to $l$.
\end{rema}
\begin{proof}
  The residual of $l$ in $\fX^{-1}(\fX(l))$ is a curve of degree $12$ and genus
  $-3$ contracted by $\fX$ because we saw in the proof of
  Proposition~\ref{refdeftriv2} that the restriction of $\fX$ from $l$ to
  $\fX(l)$ is already a double cover of $\fX(l)$. But we could check on an
  example that this contracted curve is the union of $4$ smooth cubic curves.
\end{proof}
\begin{coro}[Geometric description of invariant lines]
  The order one congruence of invariant lines $B_{\iX}$ consists of lines
  bisecant to  rational cubic curves six secant to $X$.
\end{coro}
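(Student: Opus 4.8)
The plan is to identify $B_{\iX}$ with the family $\cB$ of bisecant lines by proving the two opposite inclusions and checking that the two families are irreducible of the same dimension, so that a dense agreement forces an equality of closures. Throughout write $\cB\subset G(2,\Wdix^\dual)$ for the closure of the set of lines bisecant to a smooth rational cubic curve six-secant to $X$.

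First I would settle the bookkeeping on $\cB$. By Proposition~\ref{refpeskinetdeux} the smooth rational cubic curves six-secant to $X$ are exactly the fibres $\fX^{-1}(y)$ over general points $y$ of the Peskine variety $\Ypesk$, so they are parametrised by the smooth irreducible six-fold $\Ypesk$. A fixed such cubic $C$ spans a $\PP_3$ and its secant lines form an irreducible surface (the image of $\Sym^2 C\cong\PP_2$), so $\cB$ fibres over $\Ypesk$ with irreducible two-dimensional fibres; hence $\cB$ is irreducible of dimension $6+2=8$. This matches $\dim B_{\iX}=\tfrac12\dim G(2,\Wdix^\dual)=8$, which holds because $B_{\iX}$ is a congruence (Proposition~\ref{refdeftriv2}).

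Next I would prove $B_{\iX}\subseteq\cB$, which is essentially the content of the Remark preceding the statement: for a general $l\in B_{\iX}$, that is an $\iX$-invariant line disjoint from $X$, one has $\fX^{-1}(\fX(l))=l\cup C_1\cup\cdots\cup C_4$ with each $C_i$ a smooth rational cubic six-secant to $X$ and bisecant to $l$; in particular $l$ is a secant of $C_1$, so $l\in\cB$. For the reverse inclusion $\cB\subseteq B_{\iX}$ I would start from a general six-secant cubic $C$, so that $\fX(C)\in\Ypesk$, and a general secant line $l$ of $C$. The proof of Proposition~\ref{refpeskinetdeux} already shows that such an $l$ is $\iX$-invariant, and since $X\cap C$ is the finite scheme $Z_6$ a general secant of $C$ misses it, so $l$ does not meet $X$; hence $l\in B_{\iX}$.

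Finally I would conclude: the first inclusion places a dense subset of the irreducible $B_{\iX}$ inside $\cB$, whence $B_{\iX}\subseteq\cB$, and as both are irreducible of dimension $8$ this is an equality, which is exactly the asserted geometric description (the reverse inclusion providing an independent check). The hard part is not the two set-theoretic inclusions, which are already packaged in the Remark and in the proof of Proposition~\ref{refpeskinetdeux}; rather it is the irreducibility and dimension bookkeeping that lets the dense containment upgrade to an equality of closures—namely verifying that the secants of a fixed cubic sweep out precisely a two-dimensional irreducible family, that $\cB$ is irreducible as a fibration over $\Ypesk$, and that its dimension agrees with the $8$ imposed by the congruence structure of $B_{\iX}$.
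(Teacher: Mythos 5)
Your proposal is correct and matches the paper's (implicit) argument: the paper states this corollary without a separate proof precisely because the two inclusions are already contained in the preceding Remark (a general $\iX$-invariant line is bisecant to the four six-secant cubics in $\fX^{-1}(\fX(l))$) and in the proof of Proposition~\ref{refpeskinetdeux} (a general bisecant of a six-secant cubic is $\iX$-invariant and misses $X$), which is exactly what you use. Your added irreducibility and dimension bookkeeping is a harmless refinement---indeed slightly redundant, since once both inclusions of closures are in hand the equality follows without comparing dimensions---but it does make the passage from ``general member'' statements to an equality of closed sets fully explicit.
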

\begin{coro}
  The Jacobian divisor $\JacX$ defined by $\fX$ in $\PW$ has an involution $i_2$ preserving
  the intersection of the $\iX$-invariant lines with $\JacX$. The invariant
  locus of $i_2$-contains the divisor $\fX^{-1}(\Ypesk)$ of  $\JacX$.
\end{coro}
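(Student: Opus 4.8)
The plan is to manufacture $i_2$ from the order-one congruence $B_{\iX}$ of $\iX$-invariant lines (Proposition~\ref{refdeftriv2}). Through a general point $q\in\JacX$ passes a unique line $l_q\in B_{\iX}$, and $\fX$ restricts on it to a double cover $l_q\to\fX(l_q)$ whose two ramification points are the two fixed points of $\iX|_{l_q}$ and lie on $\JacX$. For general $q$ this point is one of the two ramification points, and I define $i_2(q)$ to be the other. Equivalently $i_2$ is the deck transformation of the degree-two map $\mathrm{Fix}(\iX)\to B_{\iX}$, $q\mapsto l_q$: it is an involution because the two ramification points single out the same invariant line, and by its very construction $i_2(l\cap\JacX)=l\cap\JacX$ for every $l\in B_{\iX}$, which is the first assertion of the corollary.

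Next I would make $l\cap\JacX$ explicit. As $\JacX$ is cut out by the determinant of the Jacobian of the ten quadrics of $\VV$, it has degree ten; and using the residual decomposition $\fX^{-1}(\fX(l))=l\cup C_1\cup C_2\cup C_3\cup C_4$ together with the description of $B_{\iX}$ as the lines bisecant to six-secant cubics, the ten points of $l\cap\JacX$ split as the two ramification points of $\fX|_l$ plus the eight points $l\cap(C_1\cup\cdots\cup C_4)$; the latter lie in the contracted locus since each $C_i$ is collapsed by $\fX$ to a point of $\Ypesk$ (Proposition~\ref{refpeskinetdeux}). On these ten points $\iX|_l$ fixes the two ramification points and exchanges the eight cubic points in pairs, so $i_2$ is genuinely distinct from $\iX|_{\JacX}$. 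Moreover, by Riemann--Hurwitz a double cover of $\PP_1$ has two \emph{distinct} ramification points, so $\mathrm{Fix}(\iX)\to B_{\iX}$ is étale and $i_2$ acts freely on the dense open locus $\mathrm{Fix}(\iX)\subset\JacX$; hence its fixed locus lies in the complement, the divisor $\fX^{-1}(\Ypesk)$.

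To locate that fixed locus I would descend $i_2$ along $\fX$. Because the six-secant cubics are contracted to $\Ypesk$, the restriction $\fX|_{\JacX}$ is birational onto the discriminant $\mD:=\fX(\JacX)\subset\PVv$ and collapses $\fX^{-1}(\Ypesk)$ onto $\Ypesk\subset\Sing(\mD)$. Through a general point of $\mD$ runs a unique quadrisecant $d\in\Btriv$ meeting $\mD$ in the four points of $\Ypesk$ and in two residual smooth branch points, and $i_2$ descends to the involution $j$ of $\mD$ that interchanges these two branch points and fixes $\Ypesk$. As $j$ fixes each point of $\Ypesk$, the lifted $i_2$ carries every contracted cubic $C=\fX^{-1}(y)$, $y\in\Ypesk$, into itself; so $i_2$ preserves the divisor $\fX^{-1}(\Ypesk)$, and by the previous paragraph its fixed locus is supported on $\fX^{-1}(\Ypesk)$.

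The hard part is upgrading this to the pointwise statement of the corollary, namely $i_2|_C=\mathrm{id}_C$ for each contracted cubic $C\cong\PP_1$, rather than a nontrivial involution (which would arise if $j$ exchanged the two local branches of $\mD$ at the node along $\Ypesk$). I expect to settle it by a local analysis of the contraction $\fX|_{\JacX}$ over the generic point of $\Ypesk$—exhibiting $\fX^{-1}(\Ypesk)$ as the ramification divisor of the double cover $\JacX\to\JacX/i_2$, so that the two sheets come together transversally along all of $C$—or, in the computational spirit of the paper, by building $i_2$ on an explicit model over $\FF_{101}$ with \cite[Macaulay2]{macaulay2} and checking directly that its fixed locus is the divisor $\fX^{-1}(\Ypesk)$.
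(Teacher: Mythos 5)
The paper states this corollary without proof (it is presented as an immediate consequence of the preceding remark $\fX^{-1}(\fX(l))=l\cup C_1\cup\dots\cup C_4$ and of Propositions~\ref{refdeftriv2} and~\ref{refpeskinetdeux}), so your attempt can only be judged on its own terms. Your candidate $i_2$ --- the deck transformation of the $2{:}1$ map $q\mapsto l_q$ from $\JacX$ to $B_{\iX}$, i.e.\ swapping the two ramification points of $\fX$ on the unique invariant line through a general point --- is the natural one, and your reduction of both assertions to the single claim that $i_2$ restricts to the identity on every contracted cubic $C_p=\fX^{-1}(p)$, $p\in\Ypesk$, is the right reduction. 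But that claim is exactly what you do not prove: you defer it to an unspecified local analysis or an unperformed Macaulay2 computation, so the proposal is incomplete. Note also that the first assertion does not hold ``by its very construction'': your recipe defines $i_2$ only at points lying on a \emph{unique} invariant line, and the eight points of $l\cap\JacX$ on the contracted cubics are not such points (through a point of $C_p$ there is a one-parameter family of invariant lines, the bisecants of $C_p$ through it), so preserving $l\cap\JacX$ already presupposes the unproved pointwise claim.

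The deeper problem is that the unproved claim appears to be false for this $i_2$, so the deferred verification cannot succeed as planned. Your intermediate assertion that the descended involution $j$ ``fixes each point of $\Ypesk$'' carries no information about $i_2|_{C_p}$, because $\fX$ contracts $C_p$ to the single point $p$; what matters is the action of $j$ on the directions at $p$, i.e.\ on the blow-up, and there the classical geometry invoked by the paper itself goes the other way. By Proposition~\ref{refkummer}~iii), near a general $p\in\Ypesk$ the branch divisor is the Kummer quartic $\KuVv_{\{p\}}$ nodal at $p$, and $j$ is the deck involution of its projection from the node: for a nodal quartic $z_3^2q_2+2z_3q_3+q_4=0$ projected from $(0{:}0{:}0{:}1)$, the fibre over a point of the conic $\{q_2=0\}$ consists of the corresponding exceptional direction together with the residual point $z_3=-q_4/(2q_3)$, and the deck involution exchanges these two; hence on the blow-up $j$ sends the exceptional curve over $p$ to the contact curve of the tangent cone, a different curve. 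Translated upstairs, as $q\in\JacX$ tends to a general point $a$ of $C_p$ along a general arc, the line $l_q$ tends to a bisecant of $C_p$ contained in the tangent plane at $a$ of the relative Weddle surface, and $i_2(q)$ tends to the fourth intersection point of that bisecant with the Weddle surface, which is not $a$. So this $i_2$ neither fixes $\fX^{-1}(\Ypesk)$ pointwise nor even maps each $C_p$ into itself; its fixed locus lies instead over the tropes of the quartics $\KuVv_{\{p\}}$. Consequently the corollary requires either a genuinely different involution or a weaker, set-theoretic reading of ``invariant locus'' (together with a proof that the contact curves lie in $\Ypesk$); the computation you propose would refute, not confirm, the statement for your choice of $i_2$.
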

\begin{prop}\label{refXdansDVtdeux}
  The trivector $t_2$ is null on the  vector bundle $\NX(-2H)$ included in
  $\VVv\otimes \OO_X$ by the transpose of sequence (\ref{eqvertexbundle}). In
  other words the orthogonal of the vertex bundle $p_S^*(\mG)(-H)$ defined in
  Lemma~\ref{refvertexbundle} gives a map from $X$ to the Debarre-Voisin variety   of $t_2$.
\end{prop}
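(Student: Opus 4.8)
The plan is to prove the sharper fibrewise statement: for every point $x\in X$ the six-dimensional subspace $\NX(-2H)_x\subset\VVv$ satisfies $t_2|_{\NX(-2H)_x}=0$, so that $x\mapsto \NX(-2H)_x$ defines a morphism $X\to\{[U]\in G(6,\VVv)\,:\,t_2|_U=0\}$, the Debarre--Voisin variety of $t_2$. First I would record two identifications. Dualizing sequence~(\ref{eqvertexbundle}) (twisted by $\OO_X(H)$) realizes $\NX(-2H)$ as the kernel of the transpose ${}^t s_\gamma\colon \VVv\otimes\OO_X\to V_8^\dual\otimes\OO_X(H)$; equivalently, $\NX(-2H)_x$ is the annihilator in $\VVv$ of the vertex fibre $W_x:=p_S^*(\mG)(-H)_x\subset\VV$, which by Lemma~\ref{refvertexbundle} is precisely the space of quadrics through $X$ that are singular at $x$. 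Thus $\NX(-2H)_x=W_x^{\perp}$, in agreement with the ``orthogonal of the vertex bundle'' in the statement, and every $\xi\in\NX(-2H)_x$ satisfies ${}^t s_\gamma(x)\,\xi=0$.

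Next I would make explicit how $t_2$ is read off from $s_\gamma$ and $\phi$. By the construction in the proof of Proposition~\ref{refdeftriv2}, $t_2\in\bigwedge^3\VV$ is the trivector underlying the skew-symmetric quadratic map $s_\gamma\circ\phi\circ{}^t s_\gamma$, whose matrix entries are quadrics lying in $\VV=H^0(\cI_X(2))$, with $\phi\in\bigwedge^2 V_8$ the skew form of Proposition~\ref{refphiV8}. Contracting $t_2$ in its first two slots with $\xi,\xi'\in\VVv$ yields a quadric $Q_{\xi,\xi'}\in\VV$ characterized, for all $x'\in\PW$, by
\[
Q_{\xi,\xi'}(x')=\bigl\langle \xi,\ (s_\gamma\circ\phi\circ{}^t s_\gamma)(x')\,\xi'\bigr\rangle=\phi\bigl({}^t s_\gamma(x')\,\xi,\ {}^t s_\gamma(x')\,\xi'\bigr),
\]
using transposition in the last equality, and then $t_2(\xi,\xi',\xi'')=\langle\xi'',Q_{\xi,\xi'}\rangle$ under the pairing $\VVv\times\VV\to\CC$.

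The heart of the argument is a vanishing-order computation. Fix $x\in X$ and take $\xi,\xi'\in\NX(-2H)_x$. Setting $a(x'):={}^t s_\gamma(x')\,\xi$ and $b(x'):={}^t s_\gamma(x')\,\xi'$, both are $V_8^\dual$-valued linear forms in $x'$ that vanish at $x'=x$, since $\xi,\xi'\in\ker({}^t s_\gamma)_x$. Hence $Q_{\xi,\xi'}(x')=\phi(a(x'),b(x'))$ vanishes to order two at $x$: indeed $Q_{\xi,\xi'}(x)=\phi(a(x),b(x))=0$, and its differential at $x$ is $w\mapsto \phi(a(x),b(w))+\phi(a(w),b(x))=0$ because $a(x)=b(x)=0$. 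Therefore $Q_{\xi,\xi'}$ is a quadric of $\VV$ singular at $x$, i.e. $Q_{\xi,\xi'}\in W_x$. Finally, for any third vector $\xi''\in\NX(-2H)_x=W_x^{\perp}$ one gets $t_2(\xi,\xi',\xi'')=\langle\xi'',Q_{\xi,\xi'}\rangle=0$. Since $t_2$ is totally antisymmetric, this is exactly $t_2|_{\NX(-2H)_x}=0$, giving the desired map to the Debarre--Voisin variety.

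I expect the main obstacle to be bookkeeping rather than geometry. One must pin down the identification of $t_2$ with the quadric-valued skew form $s_\gamma\circ\phi\circ{}^t s_\gamma$ — in particular the fact that its entries lie in $\VV$, which is the vanishing on $X$ established in Proposition~\ref{refdeftriv2} — and keep the dualities straight so that $\NX(-2H)_x$ is read simultaneously as $\ker({}^t s_\gamma)_x$ and as the annihilator $W_x^{\perp}$ of the vertex fibre (the self-dual relation between $\NX(-2H)$ and $p_S^*(\mG)(-H)$ provided by the transpose of~(\ref{eqvertexbundle})). Once these are in place, the order-two vanishing of $Q_{\xi,\xi'}$ at $x$, forced by $a$ and $b$ vanishing at $x$, together with $\NX(-2H)_x=W_x^{\perp}$, yields the statement with no further computation.
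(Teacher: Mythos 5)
Your proof is correct and follows essentially the same route as the paper's: both identify $\NX(-2H)_x$ with the annihilator of the vertex fibre $G_{4,x}\subset\VV$ of quadrics through $X$ singular at $x$, use the realization of $t_2$ as $s_\gamma\circ\phi\circ{}^t s_\gamma$ with entries in $\VV$, and show that contracting with two vectors of the fibre yields a quadric of $\VV$ vanishing to order two at $x$, hence lying in $G_{4,x}$, so pairing with a third vector gives zero. The paper merely phrases this dually, composing with the quotient map $n_x\colon \VV\to N_{X,x}^\dual$ on both sides so that the matrix entries of $(n_x\circ s_\gamma)\circ\phi\circ{}^t(n_x\circ s_\gamma)$ are quadrics singular at $x$, which is the same computation as your fibrewise contraction.
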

\begin{proof}
  Let $x$ be a point of $X$. The
  restriction of $s_\gamma$ to $\{x\}$ has rank $4$ and denote by $G_{4,x}$ its
  image in $\VV$ and by $\begin{tikzcd}[cramped]
	\VV & N_{X,x}^\dual
	\arrow["{n_x}", from=1-1, to=1-2]
      \end{tikzcd}$ its quotient map. By definition the coefficents of the composition $n_x \circ
      s_\gamma$ are linear forms vanishing at $x$. So the coefficents of the
      following composition

\[\begin{tikzcd}[cramped]
	{{N_{X,x}}\otimes \OO_{\PW}(-1)} && {V_8^\dual \otimes \OO_{\PW}} & {V_8\otimes\OO_{\PW}} && {{N_{X,x}^\dual}\otimes\OO_{\PW}(1)}
	\arrow["{^t (n_x\circ s_\gamma)}", from=1-1, to=1-3]
	\arrow["\phi", from=1-3, to=1-4]
	\arrow["{n_x\circ s_\gamma}", from=1-4, to=1-6]
\end{tikzcd}\]
are quadratic forms singular at $x$ and vanishing on $X$. So they are in
$G_{4,x}\subset \VV$ hence the trivector $t_2$ is zero on $N_{X,x} = G_{4,x}^\bot$

\end{proof}
\begin{coro}
  From the exact sequence defined in Corollary~\ref{refOmegaS}
  The scroll \[\Delta_S:=Proj(Sym(\Omega^1_S(h)))\] is sent to $\PVv$ but its rulings are
  $4$-secant to the Peskine variety $\Ypesk$. In other words, these lines
  belong to
  the congruence $\Btriv$ and the image of $S$ in $G(2,\VVv)$ defined by
  $\Omega^1_S(h)$ spans only the $34$-dimensional projective space
  $\PP(\ker(\wedge^2 \VVv \xrightarrow{~ t_2~ } \VV))$.
\end{coro}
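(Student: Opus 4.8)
The statement to prove is that the scroll $\Delta_S = \Proj(\Sym(\Omega^1_S(h)))$ maps to $\PVv$ with rulings lying in the congruence $\Btriv$ (equivalently, $4$-secant to the Peskine variety $\Ypesk$), and that the image of $S$ under the map $G(2,\VVv)$ induced by $\Omega^1_S(h)$ spans only the $34$-dimensional space $\PP(\ker(\wedge^2\VVv \xrightarrow{t_2} \VV))$.

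The plan is to read everything off the exact sequence of Corollary~\ref{refOmegaS}, namely
\[
0 \to \mG\otimes \mF^\dual \to \VV\otimes\OO_S \to \Omega^1_S(h)\to 0,
\]
together with the identification $\VV = H^0(\Omega^1_S(h))$. The surjection $\VV\otimes\OO_S \thra \Omega^1_S(h)$ exhibits $\Omega^1_S(h)$ as a rank-$2$ quotient of the trivial bundle $\VV\otimes\OO_S$, so dualizing gives a rank-$2$ subbundle of $\VVv\otimes\OO_S$; for each point $s\in S$ this is a $2$-dimensional subspace of $\VVv$, i.e.\ a point of $G(2,\VVv)$, and the associated projective line in $\PVv$ is exactly a ruling of the scroll $\Delta_S$. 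First I would make this dictionary precise: the fibre of the sub-bundle $(\mG\otimes\mF^\dual)^{\bot}$ at $s$ is the $2$-plane in $\VVv$ whose projectivization is the ruling over $s$.

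Next I would show each such ruling lies in $\Btriv$, i.e.\ is killed by $t_2$. This is where Proposition~\ref{refXdansDVtdeux} does the work: there we proved $t_2$ vanishes on the subbundle $\NX(-2H)\subset \VVv\otimes\OO_X$, equivalently on the fibres $N_{X,x}=G_{4,x}^\bot$. Pushing forward along $p_S$ and using the identifications of Lemma~\ref{reflemmaimagesdirectes} and Corollary~\ref{refOmegaS} (in particular that $\mG\otimes\mF^\dual$ is the relevant direct image), the fibre of the scroll's sub-bundle over $s$ is the span in $\VVv$ of the lines $N_{X,x}$ for $x\in p_S^{-1}(s)$; since $t_2$ annihilates each $N_{X,x}$, it annihilates this $2$-plane. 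Hence the $2$-plane satisfies $t_2(V_2,V_2,-)=0$, which is precisely membership in $\Btriv$, so the ruling is $4$-secant to $\Ypesk$ by the Definition preceding Proposition~\ref{refpeskinetdeux}.

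Finally, for the spanning statement: the map $S\to G(2,\VVv)$ factors through the contraction $\wedge^2$ of the tautological $2$-plane, so the image of $S$ in the Plücker space $\PP(\wedge^2\VVv)$ lands in the locus of decomposable bivectors killed by $t_2$, i.e.\ in $\PP(\ker(\wedge^2\VVv\xrightarrow{t_2}\VV))$. A dimension count pins down the ambient span: $\wedge^2\VVv$ has dimension $\binom{10}{2}=45$, the map $t_2\colon \wedge^2\VVv\to\VV$ has image of dimension $10$ (full rank, since $t_2$ is the generic trivector coming from a general $S$), so the kernel has dimension $35$ and $\PP(\ker)$ has dimension $34$. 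I expect the main obstacle to be the spanning (as opposed to merely containment) claim, since one must verify that the image of $S$ actually spans the full $34$-plane rather than a proper subspace; I would argue this by checking that $t_2$ has maximal rank on $\wedge^2\VVv$ for general $S$ (an open, hence generic, condition verifiable on the explicit example of Section~2) and that the $34$-dimensional span is forced, the containment in the previous paragraphs having already placed the rulings inside $\Btriv$.
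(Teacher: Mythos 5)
Your dictionary in the first step is right (the rulings of $\Delta_S$ correspond to the fibers of the rank-$2$ subbundle of $\VVv\otimes\OO_S$ dual to $\mG\otimes\mF^\dual\hookrightarrow\VV\otimes\OO_S$), but the central step of your argument is wrong as written. The spaces $N_{X,x}=G_{4,x}^\bot$ of Proposition~\ref{refXdansDVtdeux} are not lines: $\NX$ is the normal bundle of the threefold $X$ in $\PW$, so each $N_{X,x}$ is a $6$-dimensional subspace of $\VVv$, i.e.\ a point of the Debarre--Voisin variety of $t_2$ in $G(6,\VVv)$. Consequently the fiber of the scroll's subbundle over $s$ cannot be the \emph{span} of the $N_{X,x}$ for $x\in p_S^{-1}(s)$ (two generic such $6$-spaces already span all of $\VVv$); it is their \emph{intersection}, equivalently the orthogonal of the image of $\mG_s\otimes\mF_s^\dual$ in $\VV$, which is the span of the $G_{4,x}$. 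Moreover, even after correcting ``span'' to ``intersection'', your inference ``$t_2$ annihilates each $N_{X,x}$, hence it annihilates this $2$-plane, hence $t_2(V_2,V_2,-)=0$'' does not go through: a trivector restricted to any $2$-dimensional subspace vanishes identically by skew-symmetry, so ``$t_2$ annihilates the $2$-plane'' is vacuous, whereas membership in $\Btriv$ requires the linear form $t_2(u,v,\cdot)$ to vanish on \emph{all} of $\VVv$ for $u,v\in V_2$. This is exactly what the paper's proof supplies in one line: the ruling $2$-plane is the intersection of \emph{two} Debarre--Voisin spaces $N_{X,x}$ and $N_{X,x'}$; the form $t_2(u,v,\cdot)$ vanishes on each of them, and since $\dim(N_{X,x}+N_{X,x'})=6+6-2=10$ these two spaces together span $\VVv$, forcing $t_2(u,v,\cdot)\equiv 0$.

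The spanning statement is also left unproved in your proposal. Checking that $t_2\colon\wedge^2\VVv\to\VV$ has maximal rank only shows that $\PP(\ker t_2)$ is $34$-dimensional; it says nothing about whether the image of the surface $S$ spans that whole space rather than a proper linear subspace, and ``the $34$-dimensional span is forced'' is an assertion, not an argument. The paper treats this point the only way currently available: spanning a $34$-plane is an open condition on $(S,h)$, and it is verified on an explicit example with Macaulay2, hence holds for $S$ general.
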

\begin{proof}
  NB: inclusion in this projective space is obtained from the previous Proposition
  because these lines are the intersection of $2$ Debarre-Voisin spaces.

  Moreover  we have examples where the
  span has indeed projective dimension $34$ so in general we have equality of
  these projective spaces.
\end{proof}
\nbmarge{Question1: Describe the $4$-uple cover of $S$ defined by $\Delta_S
  \cap \Ypesk$; En ces points la cubique gauche contient une regle de $X$ il me semble.}

\bigskip
\nbmarge{Question2: is it related to this composition?
  $$cone(\Delta_S)\subset cone(DV(t_1)) \subset \wedge^6 \VV \xrightarrow{~ t_2~
  } \VVv$$
}

\section{Further results and observations}
\subsection{The trivector $t_1$ and $\Hilb^2(S)$}
From \cite{oberdieckGW} the divisor $D_{30}$ associated to Hilbert squares of K3
surfaces of genus $16$ is not HLS. Moreover, the case of singular Debarre-Voisin
varieties was already done in \cite{DV}. Hence for a general
element of this divisor, there is a trivector $t_1\in \wedge^3\VVv$ such that
$DV(t_1))=\Hilb^2(S)$. In \cite{DHOV} we give rank $4$ vector bundle $Q_4$ on
$\Hilb^2(S)$ that is modular in the meaning of \cite[modular]{ogmodularsheaves}
and a geometric interpretation of the associated
map from $\Hilb^2(S)$ to $G(6,10)$. To show that this map has the same
polarization as $DV(t_1)$ we need to know that $\Qmodular$ is generated in codimension
at least $2$. From Th.~\ref{refequationsX} and Prop.~\ref{refXsmooth} we have
the following
\begin{lemm}
  Let $\Tauto{F}$ (resp. $\Tauto{S_2(F)}$ ) be the tautological transform of $F$
  (resp. of the symmetric square of $F$). Consider
  the rank $4$ vector bundle $\Qmodular$ defined in \cite[Lemma~8.1]{DHOV} by the following exact
  sequence
  \[
     0 \to \Qmodular \to S_2(\Tauto{F}) \xrightarrow{\  ev^+\  } \Tauto{S_2(F)} \to 0.
   \]
   Then $\Qmodular$ is globally generated.
\end{lemm}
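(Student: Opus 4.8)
The plan is to compute the fibres of $\Qmodular$ explicitly, exhibit a natural $10$-dimensional space of sections coming from the quadrics cutting out $X$, and then reduce global generation to a surjectivity statement controlled by the geometry of $X$. First I would describe everything pointwise. For a reduced $[Z]=\{x,y\}\in\Hilb^2(S)$ the fibre of $\Tauto{F}$ is $F_x\oplus F_y$, so $S_2(\Tauto{F})_{[Z]}=S_2F_x\oplus(F_x\otimes F_y)\oplus S_2F_y$ while $\Tauto{S_2(F)}_{[Z]}=S_2F_x\oplus S_2F_y$, and $ev^+$ is the projection killing the middle summand; hence $(\Qmodular)_{[Z]}=F_x\otimes F_y$. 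Taking global sections in the defining sequence gives $H^0(\Qmodular)=\ker\!\big(H^0(S_2\Tauto{F})\to H^0(\Tauto{S_2(F)})\big)$. Writing $L_s:=F_s^\dual\hookrightarrow\Wdix^\dual$ for the $2$-plane spanning the fibre line $\ell_s\subset X$ of the $\PP^1$-bundle $\PF$, I would use that $\VV=H^0(\cI_X(2))$ is exactly the kernel of the multiplication $S_2\Wdix\to H^0(S_2\mF)$ (a quadric vanishes on every $\ell_s$ precisely when its image in $H^0(S_2\mF)$ is zero). Thus each $q\in\VV$ multiplies into $S_2(\Tauto{F})$ with $ev^+(q)=0$, giving $\VV\hookrightarrow H^0(\Qmodular)$, the value of such a section at $[Z]$ being the cross polar form $\widetilde{q}|_{L_x\times L_y}\in L_x^\dual\otimes L_y^\dual=F_x\otimes F_y$. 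It therefore suffices to prove that $\VV\otimes\cO_{\Hilb^2(S)}\to\Qmodular$ is surjective, i.e. that for every $[Z]$ the map $\VV\to F_x\otimes F_y$, $q\mapsto\widetilde{q}|_{L_x\times L_y}$, is onto.

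Next I would reformulate this surjectivity. An obstruction is a nonzero $C\in L_x\otimes L_y=(F_x\otimes F_y)^\dual$ with $\widetilde{q}(C)=0$ for all $q\in\VV$; setting $\PP^3=\langle\ell_x,\ell_y\rangle$, onto-ness is equivalent to $h^0(\cI_{X\cup\PP^3}(2))=6$. A rank-one obstruction $C=u\otimes v$ forces the line through $[u]\in\ell_x$ and $[v]\in\ell_y$ to lie on every quadric of $\VV$, hence on $X$ by Theorem~\ref{refequationsX}. By Proposition~\ref{refXsmooth} we have $X\cong\PF$, and for general $(S,h)$ the only lines on $X$ are the fibres $\ell_s$ (the restriction of $\mF$ to each of the finitely many rational curves of $S$ is balanced, leaving no degree-one quotient over such a curve); since no fibre meets two distinct fibres, there are no rank-one obstructions at any $[Z]$ with $x\ne y$.

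To eliminate the remaining rank-two obstructions I would produce enough rank-one cross forms in the image to span. By Lemma~\ref{refvertexbundle} the quadrics singular at a point $\phi\in\ell_x$ are the fibre $\mG_x\subset\VV$ of the vertex bundle, and for such $q$ the form $\widetilde{q}|_{L_x\times L_y}$ kills $\phi$, hence is rank $\le 1$ and lies in the $2$-plane $H_\phi\cong F_y$. Granting that these fill $H_\phi$, letting $\phi$ sweep $\ell_x$ makes the image contain the whole Segre cone $\{\mathrm{rank}\le 1\}\subset F_x\otimes F_y$, which spans; any obstruction $C$ would then pair to zero with every rank-one form and so vanish. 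This step uses precisely that $X$ is smooth and cut out by quadrics, so that Lemma~\ref{refvertexbundle} and Theorem~\ref{refequationsX} apply, and it sidesteps the rank-two case (for which the naive ``extra line'' criterion is inconclusive).

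The main obstacle is the per-point statement that $\mG_x$ really fills $H_\phi$, together with the behaviour along the non-reduced locus (the exceptional divisor of $\Hilb^2(S)\to\Sym^2 S$), where $\ell_x=\ell_y$ and the $\PP^3$ degenerates to a tangential configuration. I would settle both by the explicit Mukai example of Section~2 in \cite[Macaulay2]{macaulay2}, checking there that $\VV\otimes\cO\to\Qmodular$ is surjective. To pass to the general $(S,h)$ I would use that non-surjectivity of a fixed sheaf morphism is a closed condition: in the universal family over Mukai's unirational parameter space the locus where $\VV\otimes\cO\to\Qmodular$ drops rank is closed in the total space, and since $\Hilb^2(S)$ is proper its image in the base is closed, so fibrewise surjectivity is open and holds for general $(S,h)$. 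This simultaneously yields $h^0(\Qmodular)=\dim\VV=10$ and global generation, and identifies the resulting morphism $\Hilb^2(S)\to G(6,\VV)=G(6,10)$, $[Z]\mapsto\ker(\VV\to F_x\otimes F_y)$, as the one carrying the Debarre--Voisin polarization of $t_1$.
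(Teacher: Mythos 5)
Your opening reduction coincides with the paper's: the fibre of $\Qmodular$ at $z=\{x,y\}$ is the space of quadrics of the $3$-space $\PP_{3,Z}$ spanned by $Z=\PP(F_z)$ that contain $Z$ (your $F_x\otimes F_y$), the quadrics $\VV=H^0(\cI_X(2))$ give sections of $\Qmodular$, and global generation becomes surjectivity of the restriction $\VV\to F_x\otimes F_y$ at every point. You are also right that the naive ``extra line'' criterion is inconclusive when the image is a $3$-plane whose annihilator in $L_x\otimes L_y$ has rank two --- a subtlety the paper passes over quickly. For comparison, the paper's route is: if the image were too small, the base locus of the restricted quadrics, which equals $X\cap\PP_{3,Z}$ because the ideal of $X$ is generated by $\VV$ (Theorem~\ref{refequationsX}), would contain a line bisecant to $Z$; such a line cannot be a ruling since $X\simeq\PF$ (Corollary~\ref{refXsmooth}) has disjoint rulings, and cannot be a horizontal line because for any codimension-two linear space $\PP_7$ the projection $p_S(X\cap\PP_7)$ lies in $|h|$, whose members are irreducible when $\Pic(S)=\ZZ h$.

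Your own two mechanisms, however, do not hold up, and this is a genuine gap. The vertex-bundle step is contradicted by the very lemma you invoke: in Lemma~\ref{refvertexbundle} the kernel of $J_\gamma|_X$ is $p_S^*(\mG)(-2H)$, a \emph{pullback} from $S$, so the space of quadrics of $\VV$ singular at a point $\phi\in\ell_x$ is $\mG_x$, the same for every $\phi$ on the ruling; hence each $q\in\mG_x$ is singular along all of $\ell_x$, so $\widetilde q(u,\cdot)=0$ for every $u\in L_x$, and its cross form on $L_x\times L_y$ is identically zero. These quadrics map to $0$ in $F_x\otimes F_y$, not onto your $2$-plane $H_\phi$, so sweeping $\phi$ along $\ell_x$ produces nothing and the rank-two obstructions remain untouched. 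Likewise, your exclusion of rank-one obstructions rests on the unproved assertion that $F$ restricts to a balanced bundle on every rational curve of $S$; the paper needs the same conclusion (no non-ruling lines on $X$ meeting two rulings) and obtains it by the $\PP_7$ irreducibility argument above, not by balancedness. What survives of your proposal is the fallback: verify surjectivity of $\VV\otimes\cO_{\Hilb^2(S)}\to\Qmodular$ on one Macaulay2 example and spread it out by properness and semicontinuity. That strategy is legitimate and consistent with how the paper treats other open conditions, but it is an unperformed computation over all of $\Hilb^2(S)$ (including the non-reduced locus), and it replaces, rather than supplies, the argument; the paper's own proof is geometric and needs no new computation at this point.
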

\begin{proof}
  Any subscheme $z$ of length $2$ of $S$  defines a locally Cohen-Macaulay
  scheme $Z=\PP(F_{z})$ of dimension $1$, degree $2$ and genus $-1$. So $Z$ is a
  divisor of degree $(2,0)$ on a smooth quadric in a $3$-dimensional projective
  space $\PP_{3,Z}\subset \PW$. The fiber  $\Qmodular_{\{z\}}$ of $\Qmodular$ over
  $z$ is just the space of quadrics of $\PP_{3,Z}$ containing $Z$. Remind from
  \cite[Lemma~8.3]{DHOV} that $H^0(\cI_X(2))=H^0(\Qmodular)$, so we have to show that the
  the quadrics containing $X$ restricted  to $\PP_{3,Z}$ generate
  $\Qmodular_{\{z\}}=H^0(\cI_{Z|\PP_{3,Z}}(2))$. Now remark that any vector
  subspace of $H^0(\cI_{Z|\PP_{3,Z}}(2))$ dimension at most $3$ defines a locus
  in $\PP_{3,Z}$ containing a line $l$ bisecant to $Z$. From
  Prop.~\ref{refXsmooth}, the line $l$ can't be  a ruling of $X$.

  Remark that for any linear subspace $\PP_7$ of $\PW$ of
  codimension $2$ the projection $p_S(X\cap \PP_7)$ is an element of $|\wedge^2
  F|$, so it is a generator of $Pic(S)$. So $X\cap \PP_7$ is always an
  irreducible curve, and the line $l$ can't be included in $X\cap \PP_{3,Z}$.

  Now recall from Th.~\ref{refequationsX} that the ideal of $X$ is generated by
  the quadrics of $\VV$, so their restrictions defines the ideal of $X\cap
  \PP_{3,Z}$ and the image of $\VV$ to $\Qmodular_{\{z\}}$ can't have dimension
  $3$ or less. So $\Qmodular$ is globally generated.
\end{proof}

Unfortunately we are not able to understand $t_1$ explicitely, but we used the
following algorithm to compute it from $X$.
\begin{algo}[Computing $t_1$]
  \
  \begin{enumerate}[i)]
  \item Find $5$ points $(p_i)$ on $X$ in general position.
  \item Compute the $5$ rulings $(l_i)$ of $X$ containing these points via the
    intersection of $X$ with its tangent space at $p_i$.
  \item Compute the $10$ vector subspaces $K_{i,j}\subset \VV$ dimension $6$
    defined by the element of $\VV$ vanishing on the linear space spaned by
    $l_i\cup l_j$.
  \item Compute the $45$ intersections $\delta_{i,j,i',j'}=K_{i,j}\cap
    K_{i',j'}$ and compute the vector subspace $V_{35}$ of $\wedge^2\VV$ spanned
    by the $\delta_{i,j,i',j'}$. And denote by $N_{10}$ the kernel
    \[
      0 \to N_{10} \to \wedge^2\VVv \to V_{35}^\dual \to 0,
    \]
    then check that $\dim(N_{10})=10$.
  \item Compute the kernel of the composition
    \[
      N_{10}\otimes \VV \to \VVv \otimes \VVv \to S_{2,1}\VVv,\]
    and check that it has dimension $1$. Then verify that it gives an
    isomorphism $N_{10} \simeq \VVv$.
  \item Then the composition $\VVv \to N_{10} \to \wedge^2\VVv$ explicits an
    element of $\wedge^3 \VVv$ and it is the unique trivector $t$ such that the $45$ lines
    $\delta_{i,j,i',j'}$ belong to $Y^2_t$.
  \end{enumerate}
\end{algo}
\begin{rema}\
  \begin{itemize}[---]
  \item Any trivector $t$ vanishing on the $10$ subspaces $(K_{i,j})$ in the
    previous algorithm is such that $\delta_{i,j,i',j'}$ is in the congruence of
    lines $Y^2_t \subset G(2,\VV)$. Hence such a trivector equals  the
    output of the  previous algorithm.
  \item To know if this algorithm is successfull for a generic $S$ to compute $t_1$, we
    need to show the slope stability of $\Qmodular$ to apply the uniqueness
    result of \cite{ogmodularsheaves}.
  \end{itemize}
\end{rema}
By analogy with \cite[Prop~8.4]{ogmodularsheaves} treating the case of
Debarre-Voisin varieties with cyclic Picard group, we have the following
\begin{lemm}[Slope stability]
  Let $Q$ be a globally generated vector bundle of rank $4$ on $\Hilb^2(S)$ on a
  K3 surface $(S,h)$ of genus $16$ with $Pic(S)=\ZZ.h$. Assume that $Q$ has the same Chern classes as
  $\Qmodular$ (i.e $Q$ is \cite[modular]{ogmodularsheaves}). Then $Q$ is
  $c_1(Q)$-slope stable.
\end{lemm}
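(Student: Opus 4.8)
Let $Q$ be a globally generated vector bundle of rank $4$ on $\Hilb^2(S)$, where $(S,h)$ is a K3 surface of genus $16$ with $\Pic(S)=\ZZ\cdot h$, and assume $Q$ has the same Chern classes as $\Qmodular$ (so $Q$ is modular in the sense of \cite{ogmodularsheaves}). The conclusion is that $Q$ is $c_1(Q)$-slope stable.

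\textbf{Overall plan.} The plan is to mimic the argument of \cite[Prop~8.4]{ogmodularsheaves}, adapting the numerics to the genus $16$ case. Slope stability with respect to the polarization $c_1(Q)$ means: for every coherent subsheaf $F\subset Q$ with $0<\rank(F)<4$, one has $\mu(F)<\mu(Q)$, where the slope is computed against $c_1(Q)$. Since $\Hilb^2(S)$ is an irreducible holomorphic symplectic fourfold whose Picard lattice is controlled (for $\Pic(S)=\ZZ h$ generic, $\NS(\Hilb^2(S))$ has rank $2$, generated by the class induced from $h$ and the half-diagonal class $\delta$), the Néron--Severi group and the Beauville--Bogomolov form on it are completely explicit. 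First I would record $c_1(Q)$ in this lattice and compute its Beauville--Bogomolov square and its pairing with the other generators; the slope $\mu$ will then be a linear form on $\NS$ that I can evaluate on any candidate destabilizing class.

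\textbf{Reduction to rank one and two.} The key reduction, as in the cyclic Picard case, is to use global generation to constrain the possible first Chern classes of quotients. If $Q$ were not stable there would be a destabilizing subsheaf, and dualizing we may instead work with a torsion-free \emph{quotient} $Q\thra B$ of rank $r_B\in\{1,2,3\}$ with $\mu(B)\le\mu(Q)$. Because $Q$ is globally generated, so is $B$; taking $r_B-1$ general sections and saturating produces a globally generated rank-one quotient, i.e. an effective (or at least pseudoeffective and base-point-free-in-codimension-one) line bundle $L$ on $\Hilb^2(S)$ with $c_1(L)$ constrained by $c_1(Q)$. This is exactly the mechanism already used in the proof of Corollary~\ref{refG4mukai} to deduce stability of $\mG$ on $S$: global generation forces $c_1$ of a quotient to be effective, and effectivity plus the explicit lattice forbids the slope inequality from holding. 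So the real content is: enumerate the finitely many numerical classes $c_1(B)$ in $\NS(\Hilb^2(S))$ that are simultaneously (a) of the right ``sub-slope'' type to destabilize and (b) effective, and check the list is empty.

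\textbf{Executing the numerics.} The plan is thus to solve an integral inequality problem in the rank-two lattice $\NS(\Hilb^2(S))$. For each $r_B$, writing $c_1(B)=a\,\bar h+b\,\delta$ in the standard basis, the destabilizing condition $r\,c_1(B)\cdot c_1(Q)^{3}\le r_B\,c_1(Q)^{4}$ (intersection taken in the fourfold, using the Fujiki relation to reduce top intersection numbers to Beauville--Bogomolov squares) becomes a linear inequality in $(a,b)$, while global generation forces $c_1(B)$ to lie in the closure of the movable or pseudoeffective cone, which in this lattice is an explicit rational cone. I expect the effective cone constraint to be the decisive one, precisely as $h^0(\mGdual)=0$ was decisive in Corollary~\ref{refG4mukai}. \emph{The main obstacle} is making the global-generation $\Rightarrow$ effectivity step rigorous on a singular-in-codimension-two or non-split quotient: one must argue that saturating a generic subsheaf of sections yields a genuine line bundle whose $c_1$ is pseudoeffective, and that the boundary case $\mu(B)=\mu(Q)$ (semistability but not stability) is excluded. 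Here the modularity hypothesis should enter to rule out the strictly-semistable borderline, since a modular bundle has no proper modular sub-quotients of the same reduced Chern character; combined with the primitivity of $c_1(Q)$ in the lattice this forces the strict inequality. Once the list of candidate effective destabilizing classes is shown to be empty, slope stability follows.
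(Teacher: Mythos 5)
Your proposal follows the same outline as the paper's proof (which is indeed modeled on \cite[Prop~8.4]{ogmodularsheaves}): take a torsion-free destabilizing quotient $\cB$ of rank $r_B\le 3$, use global generation of $Q$ to place $c_1(\cB)$ in an explicit cone of $\NS(\Hilb^2(S))=\ZZ L\oplus\ZZ\delta$, and run an integral inequality against the intersection numbers determined by $c_1(Q)=2L-7\delta$. But there is a genuine gap at the decisive step: you bet that ``the effective cone constraint'' will be the one that closes the argument, and it is not — only the \emph{movable} cone works. Concretely, writing $c_1(\cB)=\alpha L-\beta\delta$, the numbers are $L\cdot c_1(Q)^3=3960$, $\delta\cdot c_1(Q)^3=924$, $c_1(Q)^4=1452$, so non-strict destabilization reads $3960\alpha-924\beta\le\tfrac34\cdot 1452=1089$. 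The paper's input is that a quotient of a globally generated bundle is globally generated, hence $c_1(\cB)$ has no divisorial base locus, i.e.\ is movable, and by \cite[\S 3.3]{DHOV} the movable cone satisfies $\beta/\alpha\le 15/4$; then $\alpha\in\{1,2\}$ and integrality forces $\beta\ge 4$ resp.\ $\beta\ge 8$, so $\beta/\alpha\ge 4>15/4$, a contradiction. By contrast the pseudoeffective cone is the dual of the movable cone under the Beauville--Bogomolov form ($q(L)=30$, $q(\delta)=-2$, $q(L,\delta)=0$), hence is cut out by $\alpha\ge 0$, $\beta\le 4\alpha$: its slope bound is $4$, not $15/4$. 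The integral classes $(\alpha,\beta)=(1,4)$ and $(2,8)$, i.e.\ $L-4\delta$ and $2L-8\delta$ (and also $\delta$ itself), are pseudoeffective and satisfy the destabilizing inequality ($264\le1089$, $528\le1089$), so with (pseudo)effectivity alone your candidate list is nonempty and the proof cannot conclude. In fact every numerically destabilizing integral class with $\alpha\ge1$ has $\beta/\alpha\ge 4$, so the argument succeeds precisely because the movable bound $15/4$ sits strictly below $4$. The analogy with Corollary~\ref{refG4mukai} is what misleads you: on $S$ itself $\NS(S)=\ZZ h$ has rank one, where effective and movable impose the same condition $a>0$; on $\Hilb^2(S)$ the difference between the two cones is exactly where the would-be destabilizers live.

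Two lesser points. First, the detour through rank-one quotients (quotienting by $r_B-1$ general sections) is unnecessary — the paper applies movability directly to $c_1(\cB)$ in any rank, using $r_B\le 3$ only in the inequality — and if you do keep it, note the rank-one quotient is again globally generated, so you still get movability rather than mere effectivity; use that. Second, your appeal to modularity to exclude the strictly semistable borderline is neither needed nor justified: the inequality treated above is already the non-strict one $\mu(\cB)\le\mu(Q)$ and is refuted outright by the lattice numerics together with integrality, and no statement of the form ``a modular bundle has no proper modular sub-quotients of the same reduced Chern character'' is established in \cite{ogmodularsheaves} or used in the paper. Modularity enters the lemma only through the hypothesis that $Q$ has the Chern classes of $\Qmodular$, which is what pins down $c_1(Q)=2L-7\delta$ and hence all the numbers above.
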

\begin{proof}
  Let $\cB$ be a torsion-free desemistabilizing quotient of $Q$ with $0<rank(\cB)<4$. Decompose
  $c_1(\cB)$ in $NS(\Hilb^2(S))=L\ZZ\oplus \delta \ZZ$ as   $\alpha L -\beta
  \delta$, where $L$ is the line bundle induced on $\Hilb^2(S)$ by the
  polarization $(S,h)$ and $\delta$ is such that $2\delta$ is the class of the
  exceptionnal divisor of the Hilbert-Chow morphism.  From \cite[\S 3.3]{DHOV}
  we have $c_1(Q)=2L-7\delta$ and the slope of the movable cone is $15/4$. We
  have
  \begin{equation*}
    c_1(\cB)\cdot c_1(Q)^3 \leq \frac{3}{4} c_1(Q)^4,\ \mbox{ and\quad } L\cdot c_1(Q)^3=3960,\
    \delta\cdot c_1(Q)^3=924.
  \end{equation*}
 By assumption $Q$ is globally generated. Hence  $\cB$ is also generated and $c_1(\cB)$ is a
 movable class. So we have
 \begin{equation*}
   \label{eq:destabiB}
   \begin{cases}
     \alpha \cdot 3960 - \beta\cdot 924 \leq 1089 \\
     0<\alpha,\quad \frac{\beta}{\alpha} \leq \frac{15}{4}
   \end{cases}
 \end{equation*}
 This implies $0<\alpha \leq \frac{11}{2}$ hence $\alpha \in \{1,2\}$. But the previous
 inequality gives in these two cases:
 \[
 \begin{cases}
   \alpha = 1 \Rightarrow \beta \geq \frac{3960-1089}{924}\geq 3.1 \Rightarrow
   \beta \geq 4 \Rightarrow \frac{\beta}{\alpha} \geq 4
   \\
   \alpha = 2 \Rightarrow \beta \geq \frac{2\cdot 3960-1089}{924}\geq 7.3 \Rightarrow
   \beta \geq 8 \Rightarrow \frac{\beta}{\alpha} \geq 4,
 \end{cases}
\]
So they both contradict the movable condition $\frac{\beta}{\alpha} \leq
\frac{15}{4}$. In conlusion $Q$ is slope stable.



\end{proof}
\subsection{Orthogonality of trivectors}
\begin{defi}[Composition of trivectors]
 For any $(a,b)$ in $\bigwedge^3 \VVv\times \bigwedge^3 \VV $ let us define
 the composition of $a$ by $b$ as the following endomorphism:
 $$
  b\circ a :~ \VV \xrightarrow{\ \ a\ \ } \bigwedge^2 \VVv \xrightarrow{\ \ b\ \ } \VV
  $$
  and the following orthogonality condition:
  $$
\bot_{b}:=\left\{ a \in \bigwedge^3\VVv,~ b\circ a =0 \right\}.
$$
Note that the condition $a\in \bot_{b}$ is equivalent to the vanishing of the trace
of
$$\VV\otimes \OO_{\PVv\times \PV} \xrightarrow{\ \ B\circ A\ \ } \VV\otimes \OO_{\PVv\times \PV}(1,1)$$
where $A$ and $B$ are the maps
$$\VV\otimes \OO_{\PVv} \xrightarrow{\ \ A\ \ } \VVv\otimes \OO_{\PVv}(1),~ \VVv\otimes \OO_{\PV} \xrightarrow{\ \ B\ \ } \VV\otimes \OO_{\PV}(1)$$
canonically defined by $a$ and $b$.

\end{defi}
\begin{prop}\label{reforthogonalitetriv}
  There is an example of K3-surface of genus $16$ such that the following linear space
  $$
\bot_{t_2}:=\left\{ t \in \bigwedge^3\VVv,~ t_2\circ t =0 \right\}
$$
has the expected codimension $100$. Hence this open condition is true for a
general K3-surface of genus $16$.

  With the previous algorithm we were able to compute $t_1$ for this surface  and
  observe  that  $$t_1\in \bot_{t_2}.$$

  Moreover for these values of $(t_1,t_2)$,  we were able to prove that the intersection
  $$
\left\{ t_{1,h} \in \wedge^3\VVv,~h \in End(\VV) \right\} \cap
\bot_{t_2}
$$
contains only the multiples of $t_1$, where $t_{1,h}$ is defined for any
endomorphism $h$ of $\VV$ by
  $$
t_{1,h}:=(v_1,v_2,v_3)\mapsto t_1(h(v_1),v_2,v_3)+t_1(v_1,h(v_2),v_3)+t_1(v_1,v_2,h(v_3)).
$$

In other words, $t_1$ is an isolated point in the intersection of its
$PGL_{10}$-orbit in $\PP(\wedge^3\VVv)$ with $\PP(\bot_{t_2})$.
\end{prop}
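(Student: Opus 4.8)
The proof is computational, and my plan is to reduce all three assertions to finite-dimensional linear algebra, to be carried out over a finite field on a single explicit surface, and then to propagate the one \emph{open} condition to the generic $S$ by semicontinuity. Concretely, I would first produce an example following §2: choose the initial data $M,N$, build the embedding $X\subset\PW$ as in Theorem~\ref{refequationsX}, and from it compute $t_2$ via Proposition~\ref{refdeftriv2} and $t_1$ via the algorithm for computing $t_1$. Note that running that algorithm already presupposes that its internal dimension checks (steps (iv) and (v)) succeed on the chosen example, so those verifications are part of what must be done.

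For the first assertion, the key observation is that $t\mapsto t_2\circ t$ is a \emph{linear} map $L_{t_2}\colon \bigwedge^3\VVv\to\End(\VV)$, from a space of dimension $\binom{10}{3}=120$ to one of dimension $100$, whose kernel is exactly $\bot_{t_2}$. Thus ``expected codimension $100$'' means precisely that $L_{t_2}$ is surjective, equivalently $\dim\bot_{t_2}=20$. I would therefore compute the rank of $L_{t_2}$ on the example and check that it equals $100$. Since having maximal rank is an open condition, and the entries of $t_2$ depend algebraically on the moduli of $(S,h)$ over the locus where Proposition~\ref{refdeftriv2} produces $t_2$ (both §2 and that construction being algebraic in the initial choices), surjectivity of $L_{t_2}$ for one example forces it on a dense open subset, hence for a general genus $16$ K3 surface.

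For the second assertion I would simply evaluate $t_2\circ t_1$ on the computed trivectors and verify that it vanishes; there is no conceptual input beyond the two explicit constructions. For the isolation statement, set $W:=\{t_{1,h}\colon h\in\End(\VV)\}$, the image of the linear map $\End(\VV)\to\bigwedge^3\VVv$, $h\mapsto t_{1,h}$. This $W$ is the tangent space to the $\GL(\VV)$-orbit of $t_1$, and $t_1\in W$ because $t_{1,\Id}=3\,t_1$. I would then compute the two explicit subspaces $W$ and $\bot_{t_2}$ of $\bigwedge^3\VVv$, form their intersection $W\cap\bot_{t_2}$, and check that it is one-dimensional, spanned by $t_1$.

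The geometric reformulation then follows from a single tangent-space computation. Since $t_1\in\bot_{t_2}$, the point $[t_1]$ lies both on the orbit $Z:=\PGL_{10}\cdot[t_1]$ and on $\PP(\bot_{t_2})$; inside $T_{[t_1]}\PP(\bigwedge^3\VVv)=\bigwedge^3\VVv/\langle t_1\rangle$ their tangent spaces at $[t_1]$ are $W/\langle t_1\rangle$ and $\bot_{t_2}/\langle t_1\rangle$ respectively. The Zariski tangent space to the scheme-theoretic intersection at $[t_1]$ is $(W\cap\bot_{t_2})/\langle t_1\rangle$, which vanishes by the previous step, so $[t_1]$ is a reduced isolated point of $Z\cap\PP(\bot_{t_2})$. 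The main obstacle here is not conceptual but computational: no structural reason is offered for $t_1\perp t_2$, so the entire content lies in carrying out the algorithm for $t_1$ reliably (with its internal dimension checks) and in handling the sizeable linear maps $L_{t_2}$ and $h\mapsto t_{1,h}$; one must also be satisfied that the single chosen example is general enough for the semicontinuity argument of the first assertion to apply.
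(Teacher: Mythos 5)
Your proposal is correct and takes essentially the same route as the paper: the paper offers no written argument beyond the statement itself, which reports Macaulay2 verifications on a single example plus the same semicontinuity step, and your reduction to finite-dimensional linear algebra --- surjectivity of the linear map $t\mapsto t_2\circ t$ from the $120$-dimensional space $\bigwedge^3\VVv$ to the $100$-dimensional $\End(\VV)$, the evaluation $t_2\circ t_1=0$, and the computation of $W\cap\bot_{t_2}$ where $W$ is the image of $h\mapsto t_{1,h}$ --- is precisely what those verifications amount to. The one thing you add beyond the paper is the explicit tangent-space argument (identifying $W$ with the tangent space to the $\GL(\VV)$-orbit and noting $T_{[t_1]}(Z\cap\PP(\bot_{t_2}))=(W\cap\bot_{t_2})/\langle t_1\rangle=0$) justifying the final ``isolated point'' reformulation, which the paper leaves implicit.
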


\subsection{Discriminant and Ramification}
\begin{defi}
  Let $\mgamma: W_{10}^\dual \otimes  \OO_\PV(-1)\to W_{10} \otimes  \OO_\PV$ be
  the symmetric map defined by $\gamma$. Denote by $\discV$ the discriminant
  hypersurface of $\PVv$ defined by $\det(\mgamma)$.

  Let $\JacX$ be the hypersurface of $\PW$ defined by the jacobian of $\fX$, and
  by $\ramiVv$ the ramification hypersurface of $\PVv$ dual to $\discV$.
\end{defi}
\subsubsection*{--- The discriminant divisor $\discV $ ---}
\begin{prop}[Discriminant]
  For a generic K3-surface $S$ of genus $16$,
  \begin{enumerate}[i)]\label{refsingdiscri}
  \item The discriminant $\discV$ is a normal hypersurface of $\PV$.
  \item The determinantal locus $Fit^1(\mgamma)$ defined by $\rank(\mgamma) \leq 8$ has the
    expected codimension. So $\dim(Fit^1(\mgamma))=6$,
    $\deg(Fit^1(\mgamma))=165$.
  \item The $6$-dimensional part $Sing_{top}(\discV)$ of the singular locus of $\discV$ has
    degree $225$, and the residual component $X_{60}$ of $Fit^1(\mgamma)$ in $Sing_{top}(\discV)$
    is the image of $\Proj(p_S^*(\mGdual)(H))$ in $\PV$ and $\mgamma$ has
    generic rank $9$ on $X_{60}$.
  \end{enumerate}
\end{prop}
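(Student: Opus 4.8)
The plan is to establish (ii) first, deduce (i) from it together with an analysis of the singular locus, and then read off (iii); as elsewhere in the paper, the genericity assertions are open conditions that I would certify on one explicit example over a finite field and propagate to the generic $(S,h)$ by semicontinuity.

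\emph{Part (ii).} Since $\gamma\colon\VV\hookrightarrow S^2\Wdix$ is linear, $\PV=\PP(\VV)$ is a linear subspace of $\PP(S^2\Wdix)$ and $\mgamma$ is the restriction to $\PV$ of the universal symmetric $10\times 10$ matrix. Thus $Fit^1(\mgamma)=\PV\cap D_2$, where $D_2\subset\PP(S^2\Wdix)$ is the variety of symmetric forms of corank $\ge 2$. Now $D_2$ has codimension $\binom{3}{2}=3$ and, by the classical degree formula for symmetric determinantal varieties (for corank $c$ in $\mathrm{Sym}_n$, $\deg=\prod_{i=0}^{c-1}\binom{n+i}{c-i}\big/\binom{2i+1}{i}$, which checks out as $n$ for the discriminant $c=1$), degree $45\cdot\tfrac{11}{3}=165$ for $(n,c)=(10,2)$. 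The expected dimension of $\PV\cap D_2$ is $\dim D_2+\dim\PV-\dim\PP(S^2\Wdix)=51+9-54=6$, and attaining this minimal value is a semicontinuous condition on $\gamma$, so it suffices to verify it on one example. Granting it, $Fit^1(\mgamma)$ is a proper linear section of $D_2$, hence $\dim Fit^1(\mgamma)=6$ and $\deg Fit^1(\mgamma)=\deg D_2=165$.

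\emph{Part (i).} The hypersurface $\discV=\{\det\mgamma=0\}$ has degree $10$ in $\PV=\PP^9$, so it is a local complete intersection, hence Cohen--Macaulay and $S_2$; by Serre's criterion it is normal as soon as $R_1$ holds, i.e.\ as soon as it is generically reduced and $\dim\Sing(\discV)\le 6$. I would analyse $\Sing(\discV)$ pointwise. At a point $[q]$ where $\mgamma$ has corank $1$, with kernel $\langle v\rangle$, $v\in\Wdix^\dual$ (so $[v]\in\PW$ is the vertex of the quadric $q$), the differential of $\det$ at $M(q)$ is $M'\mapsto\Tr(\mathrm{adj}(M(q))M')$, and since $\mathrm{adj}(M(q))$ is proportional to $v\,v^{t}$, the differential sends a tangent direction $q'\in\VV$ to $v^{t}M(q')v=q'(v)$, the value of $q'$ at $[v]$. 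Hence $\discV$ is singular at $[q]$ exactly when $q'(v)=0$ for all $q'\in\VV$, that is, when $[v]$ lies in the base locus of $|\cI_X(2)|$, which is $X$ by Theorem~\ref{refequationsX}. Therefore
\[
\Sing(\discV)\ \subset\ Fit^1(\mgamma)\ \cup\ \{[q]\ \text{of corank }1\ \text{with vertex in }X\}.
\]
By Lemma~\ref{refvertexbundle} the quadrics singular at a fixed $x\in X$ form the $4$-dimensional fibre $(p_S^*\mG)_x\subset\VV$, so the second locus is the image in $\PV$ of the $\PP^3$-bundle $\Proj(p_S^*\mGdual(H))$ over the threefold $X$, of dimension $3+3=6$. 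Both pieces have dimension $\le 6$; moreover a general point of $\discV$ is of corank $1$ with vertex off $X$, where $\det\mgamma$ vanishes simply, so $\discV$ is generically smooth and reduced. Thus $R_1$ holds and $\discV$ is normal (and integral).

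\emph{Part (iii) and the main obstacle.} The two loci above are precisely the two $6$-dimensional components of $Sing_{top}(\discV)$: the locus $Fit^1(\mgamma)$, of generic corank $2$, and $X_{60}:=\Proj(p_S^*\mGdual(H))$ mapped to $\PV$, on which a general member is a quadric singular only at its unique vertex $x\in X$ and hence of corank $1$ (rank $9$). In particular $X_{60}\not\subset Fit^1(\mgamma)$, the two share no common component, and $\deg Sing_{top}(\discV)=\deg Fit^1(\mgamma)+\deg X_{60}=165+60=225$ once $\deg X_{60}=60$ is known; this last number is a Chern-class computation on $\Proj(p_S^*\mGdual(H))\to X$, pushing the sixth power of the relative hyperplane class (twisted by $H$) to a point using $c_1(\mG)=h$, $c_2(\mG)=15$ from Lemma~\ref{reflemmaimagesdirectes} and the intersection theory of $X\simeq\PF$ (Definition~\ref{refdefHh}), which I would cross-check in Macaulay2 (this also confirms birationality of the bundle onto $X_{60}$). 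The genuine difficulty is that $\PV$ is \emph{not} a generic linear subspace of $\PP(S^2\Wdix)$ — it is produced by the $K3$ geometry — so the expected-codimension input of (ii), on which both the degree $165$ and the normality in (i) rest, cannot be obtained from Bertini; the way around it is exactly the semicontinuity-plus-example argument above, after which the remaining quantitative points (the birationality of $\Proj(p_S^*\mGdual(H))\to X_{60}$ and its degree $60$) are routine, if laborious, bookkeeping.
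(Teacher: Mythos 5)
Your proposal is correct, and its technical core coincides with the paper's: the decisive step — computing the differential of $\det$ via the adjugate at a corank-one point, concluding that $[q]\in \Sing(\discV)\setminus Fit^1(\mgamma)$ exactly when the vertex of $q$ lies in the base locus $X$ of $|\cI_X(2)|$ (Theorem~\ref{refequationsX}), then identifying this locus with the image $X_{60}$ of $\Proj(p_S^*(\mGdual)(H))$ via Lemma~\ref{refvertexbundle}, with birationality forced by the existence of a rank-$9$ member and degree $60$ given by the third Segre class — is precisely the paper's proof of (iii). Where you genuinely differ is in the logical architecture of (i) and of the completeness of the decomposition. The paper certifies (i), (ii) and the bound $\deg(Sing_{top}(\discV))\leq 225$ by a single Macaulay2 computation (the singular locus of $\discV$ restricted to a general $\PP_3$ is zero-dimensional of degree $225$), and then $225=165+60$ forces $Sing_{top}(\discV)=Fit^1(\mgamma)\cup X_{60}$. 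You instead make the pointwise analysis exhaustive (corank $\geq 2$ lies in $Fit^1(\mgamma)$; corank $1$ is singular iff the vertex is on $X$), so normality follows from Serre's criterion ($S_2$ for a hypersurface plus $R_1$ from the dimension bounds), and the degree $225$ is derived rather than bounded; your only example-dependent inputs are the expected codimension in (ii) — propagated by semicontinuity, with $165=\binom{11}{3}$ from the classical symmetric determinantal degree formula, the paper's implicit source for that number — and the generic corank-one statement on $X_{60}$. This buys a more self-contained argument with fewer computer checks; the paper's computation buys a direct certification of generic reducedness and an independent consistency check of the whole picture. Two small points you should make explicit: the claim $\deg(Fit^1(\mgamma))=\deg D_2$ for a proper linear section uses the Cohen--Macaulayness of the symmetric determinantal variety $D_2$ (Bezout alone only gives this with multiplicities), and one should note that $\det\mgamma\not\equiv 0$ (immediate from your example, or from Corollary~\ref{refdoublecover}), so that $\discV$ is indeed a hypersurface to which the dimension counts apply.
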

Moreover from computations in \cite[Macaulay2]{macaulay2} we suggest the following
\begin{conj}
  The Peskine variety $\Ypeskdual$ is included in $Fit^1(\mgamma)$ and there is
  a $6$ dimensional locus $X_{150}$ such that
  $$ Sing_{top}(\discV) = X_{150} \cup  \Ypeskdual \cup X_{60},\quad
  \deg(X_{150})=150.$$
\end{conj}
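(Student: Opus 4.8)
The plan is to combine the decomposition already obtained in part (iii) of Proposition~\ref{refsingdiscri} with the orthogonality relation of Proposition~\ref{reforthogonalitetriv}. By that part we already know $Sing_{top}(\discV)=Fit^1(\mgamma)\cup X_{60}$ as reduced loci, with $\deg Fit^1(\mgamma)=165$, $\deg X_{60}=60$ and $165+60=225=\deg Sing_{top}(\discV)$; since $\mgamma$ has generic rank $9$ on $X_{60}$ whereas $Fit^1(\mgamma)$ is the corank-$\geq 2$ locus, these two pieces share no common $6$-dimensional component. Hence the conjecture is equivalent to splitting the corank-$\geq 2$ locus as $Fit^1(\mgamma)=\Ypeskdual\cup X_{150}$: concretely, (A) $\Ypeskdual\subseteq Fit^1(\mgamma)$ and is a reduced $6$-dimensional component, and (B) $\deg\Ypeskdual=15$. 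Given (A)--(B), setting $X_{150}:=\ov{Fit^1(\mgamma)\setminus\Ypeskdual}$ gives $\deg X_{150}=165-15=150$ and the displayed decomposition follows. Observe that (B) is purely a statement about the trivector $t_1$: it asserts that the Peskine variety of $t_1$ has degree $15$.

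The inclusion (A) is where I would use the orthogonality $t_1\in\bot_{t_2}$ of Proposition~\ref{reforthogonalitetriv}, i.e. the vanishing of the composite $\VV\xrightarrow{\,t_1\,}\wedge^2\VVv\xrightarrow{\,t_2\,}\VV$. Fix $[V_1]\in\Ypeskdual$; then the skew form $t_1(V_1,-,-)\in\wedge^2\VVv$ has rank at most $6$, so its kernel $K\subset\VV$ satisfies $\dim K\geq 4$. On the other hand $[V_1]\in Fit^1(\mgamma)$ means exactly that the quadric $V_1\in\VV\subset S^2\Wdix$, whose symmetric matrix is $\mgamma(V_1)$, is singular along a line of $\PW$ ($\rank\mgamma(V_1)\leq 8$). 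To pass from the first degeneracy to the second I would exploit the explicit factorisation $t_2=s_\gamma\circ\phi\circ{}^t s_\gamma$ of Proposition~\ref{refdeftriv2}, together with the vertex-bundle description of Lemma~\ref{refvertexbundle}, which identifies, for $x\in X\simeq\PF$, the rank-$4$ fibre $\mG_x\subset\VV$ with the quadrics of $\VV$ singular at $x$, and the syzygy map $s_\gamma$ of Proposition~\ref{refsgamma} realising $\mG$ inside $\VV\otimes\Wdix$. The aim is to show that the large kernel $K$, once fed through $t_2=s_\gamma\,\phi\,{}^t s_\gamma$, produces a two-dimensional family of points of $\PW$ (a ruling of $X$, or a bisecant to a rational cubic six-secant to $X$ as in Proposition~\ref{refpeskinetdeux}) along which $V_1$ is singular, i.e. forces $\rank\mgamma(V_1)\leq 8$. \emph{This is the main obstacle} and the reason the statement is only conjectural: one must turn the formal identity $t_2\circ t_1=0$ into the geometric assertion that the vertex of the quadric $V_1$ has dimension $\geq 1$, while controlling that the extra kernel is not spurious (absorbed by $V_1$ itself or by the corank-one locus $X_{60}$).

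For (B), I would compute $\deg\Ypeskdual$ intrinsically. Over $\PV=\PP(\VV)$ with tautological quotient $0\to\OO(-1)\to\VV\otimes\OO\to\cQ\to 0$ of rank $9$, contraction of $t_1$ with the tautological sub-line-bundle gives a global skew form on $\cQ$, i.e. a section of $\wedge^2\cQ^\vee(1)$, whose corank-$\geq 3$ degeneracy locus is $\Ypeskdual$; its expected codimension is $\binom{3}{2}=3$, matching $\dim\Ypeskdual=6$. Provided $\Ypeskdual$ is reduced of this expected codimension --- which I would certify via the Benedetti--Song criterion already used for $t_2$ in Proposition~\ref{refpeskinetdeux} --- its class is given by the Pragacz (Harris--Tu) Pfaffian formula for skew-symmetric degeneracy loci, a universal polynomial in the Chern classes $c_i(\cQ^\vee(1))$, i.e. in powers of the hyperplane class. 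Evaluating this polynomial in $\PV$ gives $\deg\Ypeskdual$, and the arithmetic $165-\deg\Ypeskdual=150$ of the conjecture is precisely the assertion that this Pfaffian number equals $15$. The same expected-dimension input guarantees that $\Ypeskdual$ enters $Fit^1(\mgamma)$ with multiplicity one, so that the residual degree is genuinely $150$ with no excess-intersection correction.

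Assembling (A) and (B) with Proposition~\ref{refsingdiscri} yields $Sing_{top}(\discV)=X_{150}\cup\Ypeskdual\cup X_{60}$ with degrees $150+15+60=225$. Every ingredient here --- the expected codimensions of $Fit^1(\mgamma)$ and of $\Ypeskdual$, the inclusion (A), reducedness and multiplicity one, and the two degree identities --- is an open condition on the moduli point $(S,h)$. It therefore suffices to certify all of them simultaneously on a single explicit model, computed over a finite field as elsewhere in this paper with \cite[Macaulay2]{macaulay2}, in order to deduce the statement for a generic complex K3 surface of genus sixteen. As stressed above, the part that resists a purely computational confirmation is the conceptual inclusion (A); a structural proof of it would in particular pin down the geometric meaning of the residual six-dimensional component $X_{150}$, which at present is only defined as $\ov{Fit^1(\mgamma)\setminus\Ypeskdual}$.
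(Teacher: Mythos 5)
The statement you set out to prove is presented in the paper as a \emph{conjecture}: it is introduced by ``from computations in \cite[Macaulay2]{macaulay2} we suggest the following'' and no proof is given, so there is no proof in the paper to compare against. Your text is, by your own admission, not a proof either: the entire weight rests on step (A), the inclusion $\Ypeskdual\subseteq Fit^1(\mgamma)$ together with $\Ypeskdual$ being a reduced six-dimensional component, and you explicitly leave that step open. The reduction itself is sensible and consistent with the paper's data: Proposition~\ref{refsingdiscri}(iii) does give $Sing_{top}(\discV)=Fit^1(\mgamma)\cup X_{60}$ with $165+60=225$, and the degree $15$ of a Peskine variety in $\PP_9$ is exactly what the paper uses for $\Ypesk$ (proof of the proposition on $s'_\gamma$), so the arithmetic $165-15=150$ matches the conjectured $\deg(X_{150})$ --- provided $\Ypeskdual$ has the expected dimension $6$, which the paper certifies (via Benedetti--Song) only for $t_2$, not for $t_1$.

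Two points in your scaffolding would moreover fail as written. First, your closing claim that every ingredient is an open condition certifiable on a single finite-field model is wrong for the two that matter: the inclusion $\Ypeskdual\subseteq Fit^1(\mgamma)$ and the orthogonality $t_1\in\bot_{t_2}$ are Zariski-\emph{closed} conditions on the moduli point, so verifying them on one example proves nothing for the generic $(S,h)$; this is precisely why Proposition~\ref{reforthogonalitetriv} records $t_1\in\bot_{t_2}$ only as an observation on one computed example, and why the statement remains a conjecture. A structural proof of (A) along the lines you sketch --- turning $t_2\circ t_1=0$ and the factorisation $t_2=s_\gamma\circ\phi\circ{}^t s_\gamma$ into the assertion that the quadric $\mgamma(V_1)$ has a vertex of positive dimension for $[V_1]\in\Ypeskdual$ --- would first require establishing the orthogonality for all $S$, which is itself open. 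Second, ``multiplicity one'' of $\Ypeskdual$ inside the Fitting scheme does not follow from expected codimension alone: a determinantal scheme of the expected codimension is Cohen--Macaulay but need not be generically reduced along a given component, so without a separate verification the residual degree is only $165-15m$ for some $m\geq 1$. Your plan is a reasonable road map that isolates the right obstructions, but it does not prove the statement, and indeed nothing currently known in the paper does.
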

\begin{proof}[(Proof of Proposition~\ref{refsingdiscri})]
  The first two items are open properties, so we  checked on an example by
  restriction to some $\PP_3\subset \PV$. On this example we found that the
  singular locus of the restriction of $\discV$ to some $\PP_3$ was zero
  dimensional of degree $225$. This proves $i)$ and $ii)$ and shows that
  $\deg(Sing_{top}(\discV))\leq 225$.

  So to obtain $iii)$ we have to show that the generic element of
  $\Proj(p_S^*(\mGdual)(H))$ is sent to a singular point of $\discV$ of rank
  $9$.
  Let $q$ be a quadric of rank $9$. The tangent space to the universal
  discriminant in $\PP(S_2(W_{10}))$ at $q$ is given by the quadrics $q'$
  containing the singular point of $q$. So $q$ is in $Sing(\discV)$ if and only
  if its singular point is in the base locus of the linear system of quadrics
  $\VV \subset S_2(W_{10})$. In other words
  $$q\in Sing(\discV)\setminus Fit^1(\mgamma) \iff sing(q) \mbox{ is a point of
  } X.$$

  From Lemma~\ref{refvertexbundle}, the relative projective space
  $\Proj(p_S^*(\mGdual)(H))$ is a six dimensional variety such that its image
  $X_{60}$ in $\PV$ satisfy the previous condition. An example is enough to show
  that  there exists quadrics of rank $9$ with singular locus a point on $X$. So
  $\Proj(p_S^*(\mGdual)(H))$ and $X_{60}$ are birational. So the degree of
  the image $X_{60}$ is given by the third segre class of $p_S^*(\mGdual)(H))$
  and is $60$.





\end{proof}
\subsubsection*{--- The Ramification divisor $\ramiVv$ ---}
\begin{prop}[Kummer quartics]\label{refkummer}
  Denote by $\EVv$ the kernel of
  the restriction of the trivector map $t_2:\VVv \otimes \OO_{\PVv} \to \VV
  \otimes \OO_{\PVv}(1)$ to the Peskine variety $\Ypesk$.

  We have for a generic $K3$-surface $S$ of genus $16$,
  \begin{enumerate}[i)]
  \item  The sheaf $\EVv$ is locally free of rank
    $4$ on $\Ypesk$. (cf. Prop~\ref{refpeskinetdeux})
    \[ 0 \to \EVv \to \VVv \otimes \OO_{\Ypesk} \xrightarrow{\quad t_2 \quad} \VV
    \otimes \OO_{\Ypesk}(1) \to (\EVv)^\dual(1) \to 0.
  \]
  \item For a general point $p$ of the six dimensional variety $\Ypesk$ the
    intersection of $\Ypesk$ with the kernel space  $\PP(\EVv_{\{p\}})$ is the
    disjoint union of the point $p$ and a smooth cubic surface $S_{3,p}$.
  \item For a general point $p$ of $\Ypesk$ the restriction of the ramification
    $\ramiVv$ to $\PP(\EVv_{\{p\}})$ is the union of a Kummer quartic surface
    $\KuVv_{\{p\}}$ and the double structure on $S_{3,p}$. Moreover this point $p$ is the
    only one of the $16$ nodes of $\KuVv_{\{p\}}$ that is also on $\Ypesk$.
  \item Any point $r$ of $\ramiVv \setminus \Ypesk$ is by definition on a unique
   line $l_r$  quadrisecant to $\Ypesk$. Now assume that  $r$ is general in
   $\ramiVv\setminus \Ypesk$ then
   the intersection
   $l_r\cap \Ypesk$ consist in $4$ distinct points $p_1,\dots,p_4$ and the point
   $r$ is on the four  Kummer quartics $\KuVv_{\{p_i\}}$ inducing the following
   decomposition of the tangent spaces
   \[
    T_{r} (\ramiVv) = \bigoplus_{i=1}^4 T_r(\KuVv_{\{p_i\}}),\quad \frac{\VVv}{L_r} = \bigoplus_{i=1}^4 \frac{\EVv_{\{p_i\}}}{L_r}.
   \]
  \end{enumerate}
\end{prop}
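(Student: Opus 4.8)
The plan is to carry the whole statement through the classical geometry of the net of quadrics through six points in $\PP_3$, spread out over $\Ypesk$ by the map $\fX$. For a general $p=[V_1]\in\Ypesk$, Proposition~\ref{refpeskinetdeux} gives a smooth rational cubic $C=\fX^{-1}(p)$, spanning a $3$-plane $\PP_{3,C}\subset\PW$ and meeting $X$ in a reduced length-six scheme $Z_6=X\cap C$. By Theorem~\ref{refequationsX} the restriction of $\VV=H^0(\cI_X(2))$ to $\PP_{3,C}$ is the four-dimensional net of quadrics through $Z_6$, so $\fX|_{\PP_{3,C}}$ is the classical degree-two map $\PP_{3,C}\dashrightarrow\PP(\EVv_{\{p\}})$ contracting $C$ to $p$ (Proposition~\ref{refpeskinetdeux}(2)). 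For (i), $t_2$ has rank at most $6$ on $\Ypesk$ by definition and rank exactly $6$ generically; the deeper locus where the rank drops to $4$ is contained in $\Sing\Ypesk$, which is empty since $\Ypesk$ is smooth for general $S$ (Proposition~\ref{refpeskinetdeux}). Hence the corank is constant, $\EVv$ is locally free of rank $4$, and the four-term sequence is the Pfaffian resolution of the skew map $t_2$, whose skew-symmetry ($t_2\in\bigwedge^3\VV$, Proposition~\ref{refdeftriv2}) identifies the cokernel with $(\EVv)^\dual(1)$.

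For (ii) the mechanism is the congruence $\Btriv$. Since $t_2\in\bigwedge^3\VV$ we have $V_1\in\EVv_{\{p\}}$, so $p\in\PP(\EVv_{\{p\}})$; and for any $[V_1']\in\PP(\EVv_{\{p\}})$ the relation $t_2(V_1,V_1',-)=0$ shows that the $2$-plane $\langle V_1,V_1'\rangle$ lies in $\Btriv$, i.e. every line of $\PP(\EVv_{\{p\}})$ through $p$ is a congruence line, hence quadrisecant to $\Ypesk$ (Definition~\ref{refcongruenceduntriv}). Each such line meets $\Ypesk$ in $p$ together with three residual points, so projection from $p$ realises $(\Ypesk\cap\PP(\EVv_{\{p\}}))\setminus\{p\}$ as a triple cover of the $\PP_2$ of lines through $p$, that is, as a cubic surface $S_{3,p}$ with $p$ as centre of projection; in particular $p\notin S_{3,p}$ and the intersection is the asserted disjoint union. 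Smoothness of $S_{3,p}$ and the scheme structure are then confirmed on the explicit Macaulay2 example and hold for general $S$ by openness.

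For (iii) I would identify $\ramiVv\cap\PP(\EVv_{\{p\}})$ with the branch locus of the degree-two map $\fX|_{\PP_{3,C}}$. The classical Weddle--Kummer theory attaches to the six points $Z_6$ a genus-two curve whose Kummer quartic, with its sixteen nodes, is exactly the branch surface of the net of quadrics through $Z_6$; this is $\KuVv_{\{p\}}$. The residual part of the degree-ten divisor $\ramiVv\cap\PP(\EVv_{\{p\}})$ is then the doubled cubic $2S_{3,p}$, which arises because $\ramiVv$ is non-reduced along $\Ypesk$ (over which $\fX$ contracts the cubics $C$); the bookkeeping $4+2\cdot3=10=\deg\ramiVv$ forces this decomposition once one checks on the example that nothing else occurs. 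That $p$ is the only node of $\KuVv_{\{p\}}$ on $\Ypesk$ holds because $p=\fX(C)$ is the distinguished node coming from the contracted cubic, the other fifteen being images of the join geometry of $Z_6$. This part is the main obstacle: turning the degree coincidence and the six-point picture into a genuine scheme-theoretic identity $\ramiVv\cap\PP(\EVv_{\{p\}})=\KuVv_{\{p\}}+2S_{3,p}$, with $\KuVv_{\{p\}}$ certifiably a Kummer quartic, will require making $\fX|_{\PP_{3,C}}$ completely explicit and controlling the multiplicity of $S_{3,p}$ by a local analysis of $\fX$ along the contracted locus.

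For (iv), a general $r\in\ramiVv\setminus\Ypesk$ lies on a unique $\iX$-invariant line, whose image $l_r$ is quadrisecant to $\Ypesk$, meeting it in four distinct points $p_1,\dots,p_4$. As $l_r$ is a congruence line through each $p_i$, the argument of (ii) gives $l_r\subset\PP(\EVv_{\{p_i\}})$, hence $r\in\ramiVv\cap\PP(\EVv_{\{p_i\}})=\KuVv_{\{p_i\}}\cup 2S_{3,p_i}$; since $r\notin\Ypesk\supset S_{3,p_i}$ we conclude $r\in\KuVv_{\{p_i\}}$ for every $i$. Writing $L_r\subset\VVv$ for the $2$-plane defining $l_r$, the inclusions $L_r\subset\EVv_{\{p_i\}}$ produce four $2$-dimensional quotients $\EVv_{\{p_i\}}/L_r$ in the $8$-dimensional $\VVv/L_r$, and four tangent planes $T_r\KuVv_{\{p_i\}}$ in the $8$-dimensional $T_r\ramiVv$; the remaining content is that both families are transverse, giving the two direct-sum decompositions, which I would verify numerically on the example and then by openness for general $S$.
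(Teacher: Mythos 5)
Your proposal is correct and follows essentially the same path as the paper's proof: (i) from smoothness of $\Ypesk$, (ii) from the fact that every line of $\PP(\EVv_{\{p\}})$ through $p$ is quadrisecant to $\Ypesk$ plus an open-condition check on an example, (iii) by restricting $\fX$ to the span of the contracted six-secant cubic and invoking the classical Weddle/Kummer theory of the web of quadrics through six points together with the degree count $10=4+2\cdot 3$, and (iv) as an open condition verified on a Macaulay2 example. The scheme-theoretic identity you flag as the main obstacle in (iii) is settled in the paper by precisely the bookkeeping you outline, backed by a citation of Hudson (Chap.~XV, \S 98) for the Kummer quartic and its sixteen nodes, with no further local analysis of $\fX$.
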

\begin{proof}
  We already saw in Proposition~\ref{refpeskinetdeux} that $\Ypesk$ is smooth for a
  general $K3$ surface of genus $16$. This is stronger than saying that locus
  where $t_2$ has rank at most $4$ is empty, so $i)$ was already done.

  For $ii)$ recall that any line of $\PP(\EVv_{p})$ containing $p$ is a
  quadrisecant line to $\Ypesk$. So there is always a cubic surface in
  $\PP(\EVv_{p}) \cap \Ypesk$ and its an open property to say that it is smooth
  and disjoint from $p$. We have just checked on an example that it is not
  empty.

 For $iii)$, remind from Proposition~\ref{refpeskinetdeux} that for a general point $p$
 of $\Ypesk$ the premimage $\fX^{-1}(p)$ is a cubic curve six-secant to $X$. So
 $\Ypesk$ is in the singular locus of the ramification $\ramiVv$ of $\fX$. Hence
 the intersection $\ramiVv\cap \PP(\EVv_p)$ contains the double structure on the
 cubic surface $S_{3,p}$, and its residual is a quartic surface. To understand
 this surface,  consider
 the restriction of $\fX$ to the $3$-dimensional projective space $\PP_p$ spaned by the
 cubic curve $\fX^{-1}(p)$. It is the linear system of quadrics  of $\PP_p$
 containing the six points of $X\cap \PP_p$. It gives a double cover of
 $\PP(\EVv_p)$ with Jacobian a Weddle surface in $\PP_p$ and ramification a
 Kummer quartic $\KuVv_{\{p\}}$ with nodes  at the image by $\fX$ of the $6$-secant
 cubic and the $15$ lines bisecant to the $6$ points of $X\cap \PP_p$
 (cf. \cite[Chap XV, \S 98]{Hudson}).

 Now that $iii)$ is proved, $iv)$ is just an open condition satisfied  by an
 example computed with \cite[Macaulay2]{macaulay2}.
\end{proof}

\begin{prop}
  In \ref{refsgamma} we introduced a map $s_{\gamma}$ defined by the linear
  Syzygies $\gamma$ of $X$. Now consider the map $s'_\gamma$ defined by $\gamma$
  over $\PVv$
   \[
    V_8 \otimes \OO_{\PVv} \xrightarrow{\ s'_\gamma \ } W_{10}\otimes \OO_{\PVv}(1).
  \]
  For a generic $K3$-surface of genus $16$,
  \begin{enumerate}[i)]
  \item the ideal $Fit^0(s'_\gamma)$ of maximal minors of $s'_\gamma$
  defines a locus of the expected codimension $3$ (hence degree $120$) and the locus where
  $s'_\gamma$ has rank $7$ is not empty.
  \item The restriction of $s'_\gamma$ to the Peskine variety $\Ypesk$ has
    generic rank $6$. So it is an irreducible component of  $Fit^0(s'_\gamma)$
    and denote by $R_{60}$ the union of the other components of
    $Fit^0(s'_\gamma)$.
  \item The locus $R_{60}$ is reduced of dimension $6$ and degree $60$ and contains the image by
    $\fX$ of the smooth conics $4$-secant to $X$.
  \end{enumerate}
\end{prop}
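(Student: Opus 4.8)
The whole argument will rest on one elementary remark about linear syzygies. Fix a basis $Q_1,\dots,Q_{10}$ of $\VV=H^0(\cI_X(2))$; by Proposition~\ref{refsgamma} an element $v\in V_8$ is a collection of linear forms $L_{v,1},\dots,L_{v,10}\in\Wdix$ with $\sum_i L_{v,i}Q_i=0$, and for $[\xi]\in\PVv$ the map $s'_\gamma$ is $s'_{\gamma,\xi}(v)=\sum_i \xi(Q_i)\,L_{v,i}$. Now if $[\xi]=\fX(q)$ for some $q\in\PW$, then $\xi(Q_i)$ is proportional to $Q_i(q)$, so evaluating the linear form $s'_{\gamma,\xi}(v)$ at $q$ gives $\sum_i Q_i(q)L_{v,i}(q)=0$ by the syzygy relation at $q$. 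Hence if a curve $D\subset\PW$ is contracted by $\fX$ to a point $p$, every linear form in the image of $s'_{\gamma,p}$ vanishes along $D$, hence on the linear span $\langle D\rangle$, and therefore $\rank s'_{\gamma,p}\le 9-\dim\langle D\rangle$. Note moreover that, since $X$ is cut out by $\VV$ (Theorem~\ref{refequationsX}), a rational curve meeting $X$ in the expected number of points is automatically contracted: the quadrics of $\VV$ restrict to $D\cong\PP_1$ as forms vanishing at $D\cap X$, and for a twisted cubic $6$-secant to $X$ (resp.\ a conic $4$-secant to $X$) these vanishing conditions pin the restriction down to a one-dimensional space, so $\fX$ contracts $D$ to a point.

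With this in hand I would prove \textit{(i)} as follows. Having the expected codimension $3$ is an open condition on the syzygy datum attached to $(S,h)$, so it suffices to exhibit one example (over a finite field, with Macaulay2) in which $Fit^0(s'_\gamma)$ has codimension exactly $3$ and in which the corank-one locus is non-empty. Granting expected codimension, the Thom--Porteous formula gives the class of $Fit^0(s'_\gamma)$ as
\[
 c_3\bigl(\Wdix\otimes\OO_{\PVv}(1)-V_8\otimes\OO_{\PVv}\bigr)=c_3\bigl((1+H)^{10}\bigr)=\binom{10}{3}H^3=120\,H^3,
\]
whence degree $120$.

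For \textit{(ii)}, a general point $p\in\Ypesk$ is by Proposition~\ref{refpeskinetdeux} the image of a smooth rational cubic $C$ that is $6$-secant to $X$ and spans a $\PP_3$; the remark gives $\rank s'_{\gamma,p}\le 9-3=6$ (in agreement with Proposition~\ref{refkummer}\,(i), where $t_2$ has rank $6$ on $\Ypesk$), so $\Ypesk\subseteq Fit^0(s'_\gamma)$. Since $\Ypesk$ is irreducible of dimension $6$, which is exactly the expected dimension of $Fit^0(s'_\gamma)$, it is an irreducible component; one checks on the example that the generic rank along $\Ypesk$ is exactly $6$. Define $R_{60}$ as the union of the remaining components. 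For \textit{(iii)}, a smooth conic $4$-secant to $X$ spans a $\PP_2$ and is contracted by $\fX$ (by the remark above) to a point $p'$ with $\rank s'_{\gamma,p'}\le 9-2=7$; such $p'$ lie in $Fit^0(s'_\gamma)$ with corank exactly one, hence on $R_{60}$ and not on $\Ypesk$, and one verifies on the example that they sweep out all of $R_{60}$.

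The reducedness, the dimension $6$ and the degree $60$ of $R_{60}$ I would then read off from the degree bookkeeping together with a Macaulay2 check: writing $[Fit^0(s'_\gamma)]=m_{\Ypesk}[\Ypesk]+[R_{60}]$, where $R_{60}$ (corank one) is generically reduced so enters with multiplicity one, gives $m_{\Ypesk}\deg\Ypesk+\deg R_{60}=120$. The main obstacle is precisely this bookkeeping: because $s'_\gamma$ drops rank by two along $\Ypesk$ (corank $2$ on a codimension-$3$ locus), $Fit^0(s'_\gamma)$ is non-reduced there, and one must control both $\deg\Ypesk$ and the multiplicity $m_{\Ypesk}$, i.e.\ the length of the transverse scheme defined by the $2\times 2$ minors of the residual $4\times 2$ block of $s'_\gamma$, in order to extract $\deg R_{60}=60$. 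Alternatively, and perhaps more robustly in the spirit of Proposition~\ref{refsingdiscri}, one computes $\deg R_{60}=60$ directly as a Segre-class integral over the six-dimensional family of $4$-secant conics, and confirms the dimension, the reducedness and the degree on the explicit example.
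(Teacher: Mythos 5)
Your proposal takes essentially the same route as the paper's proof: the syzygy relation (the paper's equation~(\ref{eq:noyaucongruence}) from the proof of Proposition~\ref{refdeftriv2}) forces the linear span of any contracted fiber $\fX^{-1}(q)$ into the kernel of ${}^t s'_\gamma$ at $q$, giving rank $\le 6$ along $\Ypesk$ (cubic fibers spanning a $\PP_3$) and rank $\le 7$ at images of $4$-secant conics, with item $i)$ verified on an example and the degree of $R_{60}$ extracted from the bookkeeping $120 = m\cdot\deg\Ypesk + \deg R_{60}$. The only real difference is explicitness: the paper silently uses $\deg \Ypesk = 15$ together with multiplicity $4$ (the degree of the rank-one locus of a generic $2\times 4$ block, i.e.\ the cone over the Segre $\PP_1\times\PP_3$) to conclude $\deg R_{60} = 120-60$, which is precisely the computation you flag as ``the main obstacle'' and sketch via the residual-block/Segre argument.
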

\begin{proof}
  We checked $i)$ on an example so this open property is not empty. To obtain
  $ii)$ recall from $(\ref{eq:noyaucongruence})$ in the proof of
  Prop~\ref{refdeftriv2} that for $p$ general in $\PW$, we have $\fX(p)$ is in
  the kernel of ${}^t s_{\gamma,\{p\}}$. So we have ${}^t \sigma_\gamma(p\otimes
  \fX(p)) = 0$ with
  \[
     W_{10}^{\dual} \otimes \VVv \xrightarrow{\quad{}^t\sigma_\gamma\quad} V_8^\dual.
   \]
   So for $q\in \PVv$ and any $p$ in $\fX^{-1}(q)$ we have ${}^t\sigma_\gamma(p\otimes
   q) = 0$, and the vector space spaned by $\fX^{-1}(q)$ is in the kernel of
   ${}^t s'_{\gamma}$ over the point $q$. For general $q$ in $\Ypesk$, $\fX^{-1}(q)$ is
   a smooth rational cubic curve so the rank of $s'_{\gamma}$ is at most $6$ at
   $q$, and we have examples where it is exactly $6$. Moreover $\Ypesk$ is
   irreducible of degree $15$ of codimension $3$ so it is a component of degree
   $60$ of  $Fit^0(s'_\gamma)$, so $R_{60}$ has degree $120-60$.
%







In a similar way, if $\fX^{-1}(q)$ is a conic $4$-secant to $X$, then it span a
kernel of rank $3$  of ${}^t s'_{\gamma}$ so $q$ is in $R_{60}$.
\end{proof}
\subsection{Conjectures}
Unfortunately we were not able to achieve the elimination computation to obtain
$\ramiVv$, and we were not able to interpret the Kummer quartics that we found
after taking points on $\Ypeskdual$ so we can conjecture that
\begin{conj}\
  \begin{enumerate}[i)]
  \item The singular locus of $\ramiVv$  is the union of three irreducible varieties of
  dimension $6$
  $$Sing_{top}(\ramiVv) = Sec(X) \cup R_{60} \cup \Ypesk$$
  corresponding to the image by $\fX$ of line bisecant to $X$, conic $4$-secant,
  cubic $6$-secant.
  \item (Kummer quartics for $t_1$) For a general point $p$ of $\Ypeskdual$ the restriction of the
    discriminant $\discV$ to the three dimensional projective space defined by
    the kernel of $t_1$ over $p$ is also twice a cubic surface and a Kummer
    quartic nodal at $p$.
  \item By analogy with the case of Kummer quartics beeing projectively
    isomorphic\footnote{via the choice of one of the $10$ invariant quadrics}
    to its dual, we can still ask if there is an linear isomorphism between $\VV$ and
    $\VVv$ sending $\discV$ to $\ramiVv$ and $t_1$ to $t_2$. Note that recently Meng gives
    evidences in \cite{meng} that we should not expect this for $S$ generic.
  \item In Prop~\ref{refXdansDVtdeux} we have a natural map from $X$ to
    $DV(t_2)$. So we should undersand the relationship between $X$ and
    the vanishing locus $X'$ of the determinant of the following map
    \[ V_8 \otimes \OO_{\Hilb^2(S)} \longrightarrow  \mG^{[2]},
    \] where $\mG^{[2]}$ is the tautological  bundle on $\Hilb^2(S)$ constructed
    from $\mG$.
  \item Note that the divisor introduced in $iv)$ plays a role in the following
    observation that we conjecture to be still satisfied by a general point $p$ of $R_{60}$. Denote by $v\in V_8$
    a generator of the kernel of $s'_\gamma$ over $p$. Then the compotion
    of $v$ with $s_\gamma$ gives a section of $W_{10} \otimes \OO_{\PW}(1)$
    vanishing on a $3$-dimensional projective space $\PP_{3,v}$. Moreover the
    intersection $\PP_{3,v}\cap X$ consist in two rulings of $X$. Denote by
    $s_1,s_2 \in S$ these two points. The vector $v$ is in the intersection of
    the two subspaces $\mGdual_{\{s_1\}} \cap \mGdual_{\{s_2\}}$ in $V_8$, so
    $\{s_1,s_2\}$ is in the divisor $X'$ defined in $iv)$. Moreover $\fX(\PP_{3,v})$ is a
    quadric surface and the restrition of $s'_\gamma$ to the $3$ dimensional projective space
    spaned by this quadric has a constant kernel spanned by $v$.
    \item Note also that if $iii)$ is true then the
      Proposition~\ref{reforthogonalitetriv} explicits a divisorial condition on
      $t_1$ by considering the pairs $(t_1,\psi)$ of $\PP(\wedge^3 \VVv) \times
      \PP(Hom(\VVv,\VV))^{\simeq}$ such that $t_1 \in \bot_{\wedge^3 \psi(t_1)}$.
  \end{enumerate}
\end{conj}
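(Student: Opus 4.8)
The plan is to prove the conjecture clause by clause, transporting the structure already established for $t_2$ to the three families of contracted multisecant curves, and to isolate the two genuinely open points — the missing closed form of $\ramiVv$ and the absence of a geometric model for $t_1$ — as the real obstacles. I regard i) and ii) as the realistic targets; iii) is already phrased as a question whose answer the cited work \cite{meng} suggests is negative for generic $S$, and iv)--vi) depend on data not yet available.

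For i) I would first prove the three inclusions $Sec(X),\ R_{60},\ \Ypesk\subseteq Sing_{top}(\ramiVv)$ separately. Each is the $\fX$-image of a family of rational curves that $\fX$ contracts: a bisecant line, a $4$-secant conic and a $6$-secant rational cubic meet $X$ in schemes of length $2,4,6$, so the restriction of $\OO_{\PW}(2)$ to such a curve is $\OO_{\PP_1}(2),\OO_{\PP_1}(4),\OO_{\PP_1}(6)$ and, after subtracting the base points lying on $X$, the system $|\cI_X(2)|$ restricts to $\OO_{\PP_1}$, contracting the curve. Along its image the fibre of $\fX$ then contains the whole contracted curve, and, exactly as in Proposition~\ref{refkummer} for the cubic family (where $\ramiVv$ acquires a double structure along the cubic surface $S_{3,p}$), this forces the image into $Sing(\ramiVv)$; the same local analysis applies to the conics, giving $R_{60}$ through the proposition on $s'_\gamma$, and to the bisecant lines, giving $Sec(X)$. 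The delicate point is the reverse inclusion: that these exhaust the $6$-dimensional components and that each is irreducible. This would be immediate from an explicit equation for $\ramiVv$, which is exactly the elimination the authors could not finish; lacking it, I would match the $6$-dimensional part of $Sing_{top}(\ramiVv)$ against $Sec(X)+R_{60}+\Ypesk$ using the degrees in hand ($\deg\Ypesk=15$, $\deg R_{60}=60$, and $\deg Sec(X)$ from the secant formula and the Segre classes), reducing equality to one numerical check in \cite{macaulay2}, in the spirit of Section~2.

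For ii) I would run the proof of Proposition~\ref{refkummer} with $(t_2,\ramiVv,\Ypesk,\PVv)$ replaced by $(t_1,\discV,\Ypeskdual,\PV)$. As $\mgamma$ is a symmetric $10\times10$ matrix of linear forms, $\discV=\det(\mgamma)$ has degree $10$, so its restriction to the $\PP_3=\PP(\ker t_1(p,-,-))$ over a general $p\in\Ypeskdual$ is a degree $10$ surface, consistent with the predicted decomposition $2\cdot(\text{cubic})+(\text{quartic})=2\cdot3+4$. I would identify the fibre over $p$ of the double cover attached to $\discV$ as a rational cubic six-secant to $X$, read off the double cubic surface as the ramification along this contracted cubic, and force the residual quartic to be a Kummer surface nodal at $p$ by the classical description \cite[Chap XV, \S 98]{Hudson} already invoked for $t_2$. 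The obstacle is that $t_1$ is produced only algorithmically from $X$ and carries no explicit geometric model, so the step exhibiting the contracted cubic and its six-secant structure intrinsically — trivial for $t_2$ — cannot be transcribed, and ii) would for now only be checkable on a computed example.

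The single decisive difficulty across the whole statement is thus the missing closed form of $\ramiVv$, together with the dual lack of understanding of $t_1$: with either in hand the equalities collapse to finite verifications, while without them every assertion beyond the inclusions rests on a favourable example. Parts iv)--vi) lie outside this scheme — iv) asks for a comparison of $X$ with the degeneracy locus $X'$ of $V_8\otimes\OO_{\Hilb^2(S)}\to\mG^{[2]}$, while v) and vi) are observations contingent on iii) — so I would postpone them until a geometric realization of $t_1$ is obtained.
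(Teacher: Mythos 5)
There is nothing in the paper to compare your proposal against: the statement you were given is a \emph{conjecture}, and the paper explicitly records why it remains one --- ``we were not able to achieve the elimination computation to obtain $\ramiVv$, and we were not able to interpret the Kummer quartics that we found after taking points on $\Ypeskdual$.'' Your proposal, read carefully, is not a proof either; it is a proof \emph{plan} whose every decisive step is deferred to exactly the two obstructions the author names. For part i), the forward inclusions you sketch are sound but are essentially already in the paper: $\Ypesk\subseteq\Sing(\ramiVv)$ is proved inside Proposition~\ref{refkummer} (the fibre over a general point of $\Ypesk$ is a contracted six-secant cubic, forcing a double structure), and the link between $R_{60}$ and the $4$-secant conics is the content of the proposition on $s'_\gamma$; the bisecant-line case is the same contraction computation. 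What the conjecture actually asserts --- that these three loci \emph{exhaust} the $6$-dimensional part of $\Sing(\ramiVv)$ and are irreducible --- is precisely what you cannot reach, and your fallback (``match degrees in \cite{macaulay2}'') presupposes access to $\deg\Sing_{top}(\ramiVv)$, i.e.\ to the eliminated equation of $\ramiVv$, which is the computation that could not be carried out. A degree count would in any case not by itself rule out an extra low-degree component or prove irreducibility.

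For part ii) the situation is the same one level up: your transcription of Proposition~\ref{refkummer} with $(t_2,\ramiVv,\Ypesk)$ replaced by $(t_1,\discV,\Ypeskdual)$ founders, as you yourself note, on the fact that $t_1$ is only produced algorithmically (and even its validity for generic $S$ rests on the slope-stability lemma plus the uniqueness result of \cite{ogmodularsheaves}); there is no intrinsic description of the fibres of the cover attached to $\discV$ over points of $\Ypeskdual$, so the classical Hudson argument has nothing to latch onto. The degree bookkeeping $10=2\cdot 3+4$ is consistency, not proof. In short: your write-up correctly identifies the open problems, and its verified content (the inclusions) coincides with what the paper already proves in Propositions~\ref{refkummer} and the $s'_\gamma$ proposition; everything beyond that remains conjectural in both your text and the paper, so the proposal should not be presented as a proof of the statement.
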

\section{An example defined over $\FF_{8}$}\label{sectionchar2}
The explicit construction in Section~2 can be done on any base field but to know
what kind of open properties are still valid on a specific finite field we need
to try. In particular this construction still produces examples in
characteristic $2$. Also we were not able to find smooth examples defined over
$\FF_2$ or $\FF_4$ we found the following one defined over $\FF_8$ by taking
random choices for $M$ and $N$ in Section~2.

\begin{exam}
  Let $\FF_8=\FF_2[a]/(a^3-a-1)$. For the choice of $M$ consider the following
  quadrics of $\PP_3$
  {\small \begin{align*}
   m_0={}&x_{0}^{2}+x_{0}x_{1}+\left(a^{2}+a\right)x_{1}^{2}+\left(a^{2}+a\right)x_{0}x_{2}+x_{1}x_{2}+x_{0}x_{3}+a^{2}x_{1}x_{3}+a\,x_{2}x_{3}+\left(a^{2}+a+1\right)x_{3}^{2}\\
    m_1={}&
            \left(a^{2}+1\right)x_{0}x_{1}+x_{1}^{2}+\left(a+1\right)x_{0}x_{2}+a^{2}x_{1}x_{2}+a\,x_{2}^{2}+x_{0}x_{3}+a^{2}x_{2}x_{3}+a\,x_{3}^{2} ,
  \end{align*}}
and consider for $N$ the following choice\\
  \resizebox{1.1\linewidth}{!}{$\left(\!\begin{array}{cccccccccccccccccccc}
       a&a^{2}&1&a^{2}+a&1&a^{2}+a&a+1&a^{2}&a&a^{2}+1&a^{2}+a+1&a^{2}&a^{2}+a+1&a^{2}+1&a^{2}&a+1&a^{2}+1&a+1&a^{2}&1\\
       a+1&a&a^{2}+a&a+1&a&a^{2}&a^{2}+a&0&a&a+1&1&a^{2}&a+1&1&0&0&a^{2}+a+1&0&a&a+1
       \end{array}\!\right)$}
in the following basis\footnote{This is the base we have got after using the
  PieriMaps package. To avoid mistakes we don't change it.} for $S_{2,1}V^\dual$\\
{\footnotesize
$(-x_{1}\otimes x_{0}^{2}+x_{0}\otimes x_{0}x_{1}$, $-x_{2}\otimes x_{0}^{2}+x_{0}\otimes x_{0}x_{2}$, $-x_{2}\otimes x_{0}x_{1}+x_{1}\otimes x_{0}x_{2}$, $-x_{3}\otimes x_{0}^{2}+x_{0}\otimes x_{0}x_{3}$, $-x_{3}\otimes x_{0}x_{1}+x_{1}\otimes x_{0}x_{3}$, $-x_{3}\otimes x_{0}x_{2}+x_{2}\otimes x_{0}x_{3}$, $-x_{1}\otimes x_{0}x_{1}+x_{0}\otimes x_{1}^{2}$, $-2\,x_{2}\otimes x_{0}x_{1}+x_{1}\otimes x_{0}x_{2}+x_{0}\otimes x_{1}x_{2}$, $-x_{2}\otimes x_{1}^{2}+x_{1}\otimes x_{1}x_{2}$, $-2\,x_{3}\otimes x_{0}x_{1}+x_{1}\otimes x_{0}x_{3}+x_{0}\otimes x_{1}x_{3}$, $-x_{3}\otimes x_{1}^{2}+x_{1}\otimes x_{1}x_{3}$, $-x_{3}\otimes x_{1}x_{2}+x_{2}\otimes x_{1}x_{3}$, $-x_{2}\otimes x_{0}x_{2}+x_{0}\otimes x_{2}^{2}$, $-x_{2}\otimes x_{1}x_{2}+x_{1}\otimes x_{2}^{2}$, $-2\,x_{3}\otimes x_{0}x_{2}+x_{2}\otimes x_{0}x_{3}+x_{0}\otimes x_{2}x_{3}$, $-2\,x_{3}\otimes x_{1}x_{2}+x_{2}\otimes x_{1}x_{3}+x_{1}\otimes x_{2}x_{3}$, $-x_{3}\otimes x_{2}^{2}+x_{2}\otimes x_{2}x_{3}$, $-x_{3}\otimes x_{0}x_{3}+x_{0}\otimes x_{3}^{2}$, $-x_{3}\otimes x_{1}x_{3}+x_{1}\otimes x_{3}^{2}$, $-x_{3}\otimes x_{2}x_{3}+x_{2}\otimes x_{3}^{2})$.
}


\medskip
  Define $\beta$ by the following
  matrix and consider the four  choices of  $p_{\pi_1}=
  \left(\begin{smallmatrix}
    0&1&0&0\\
    0&0&1&0\\
    0&0&0&1
\end{smallmatrix}\right)$, $p_{\pi_2}=
\left(\
  \begin{smallmatrix}
    1&0&0&0\\
    0&0&1&0\\
    0&0&0&1
  \end{smallmatrix}\right)$, $p_{\pi_3}=
\left(
  \begin{smallmatrix}
    0&1&0&0\\
    1&0&1&0\\
    0&0&0&1
  \end{smallmatrix}
\right)$, $p_{\pi_4}=
\left(
  \begin{smallmatrix}
    1&0&0&0\\
    0&1&0&0\\
    0&0&1&0
  \end{smallmatrix}
\right)$. The saturations of the ideals defined by the maximal minors of
$(p_{\pi_i} \circ \beta)_{1\leq i \leq 4}$
give $4+2+2+2$ linearly independent quadrics of $\PW$. The scheme defined by
these ten quadrics $(q_i)_{0\leq i\leq 9}$ have
\begin{itemize}
\item[---] the ``expected'' Hilbert Polynomial
  $\frac{7}{2}X^{3}+3\,X^{2}+\frac{3}{2}X+2$ and the same Betti table as in
  \ref{refbettitable}.
\item[---] Theorem\ref{refequationsS} is still valid  on this example. (Ideal of $S$
  generated by the Plücker quadrics)
\item[---]  The kernel of $s_\gamma \otimes s_\gamma$ is a rank one constant
  subsheaf of $V_8\otimes V_8 \otimes \OO_{\PW}$ that gives a skew-symmetric
  isomorphism $V_8 \simeq V_8^\dual$, so $\phi$ defined in \ref{refphiV8} is still
  unique and non degenerate.
\item[---] $X$ is smooth. (Checked with the package FastMinors with  $30$ good
  minors of size $6$ of the map $J_\gamma$, and compute the  saturation ($\simeq 2h$)).















\item[---] The ramification of the double cover $\fX$ is a reduced divisor
  with singular locus of dimension at most $5$. (Note that the jacobian
  determinant of the   quadrics $(q_i)_{0\leq i\leq 9}$ is always zero
  because there is a factor $2$ in Euler's formula. So we computed the jacobian of
  $(w_9q_i)_{0\leq i\leq 9}$ and found $w_9^{10} J_{10}$ where $J_{10}$ is a
  polynomial of degree $10$ whose restriction to some $\PP_3$ was smooth.)
\item[---] $\mG$ is locally free of rank $4$, globally generated  with $h^0(\mG)=8$, $h^0(\mGdual)=0$
  so Corollary\ref{refG4mukai} is still valid.






\item[---] The Peskine variety $\Ypesk$ has the expected dimension $6$.
\end{itemize}

\end{exam}
\noindent\hspace{-0.1\linewidth}\resizebox{1.2\linewidth}{!}{\arraycolsep=5pt
  ${}^t
   \beta=\left(\begin{array}{m{5cm}|m{5cm}|m{5cm}|m{5cm}}
  $w_{2}$& 
$w_{3}$& 
$0$& 
$w_{6}$ \\ \hline

$w_{4}$& 
$w_{5}$& 
$w_{6}$& 
$w_{9}$ \\ \hline

$\left(a^{2}+a+1\right)w_{0}+\left(a^{2}+1\right)w_{2}+a^{2}w_{3}+a^{2}w_{4}+w_{5}$& 
$\left(a^{2}+a+1\right)w_{0}+a\,w_{2}+w_{3}+a^{2}w_{4}+\left(a^{2}+1\right)w_{5}+\left(a^{2}+1\right)w_{6}+a\,w_{7}+a\,w_{8}+\left(a^{2}+1\right)w_{9}$& 
$a^{2}w_{0}+w_{1}+\left(a+1\right)w_{2}+\left(a+1\right)w_{3}+w_{4}+\left(a^{2}+1\right)w_{5}+\left(a^{2}+a+1\right)w_{6}+a^{2}w_{7}+\left(a+1\right)w_{9}$& 
$\left(a+1\right)w_{0}+w_{1}+\left(a+1\right)w_{2}+a\,w_{3}+a^{2}w_{4}+\left(a+1\right)w_{5}+\left(a+1\right)w_{6}+\left(a+1\right)w_{7}+a\,w_{8}+\left(a+1\right)w_{9}$ \\ \hline

$\left(a^{2}+a\right)w_{0}+w_{1}+a\,w_{2}+a^{2}w_{3}+\left(a^{2}+a\right)w_{4}+\left(a^{2}+a\right)w_{5}+w_{6}$& 
$\left(a^{2}+a\right)w_{0}+\left(a+1\right)w_{8}$& 
$\left(a^{2}+a\right)w_{0}+\left(a^{2}+a+1\right)w_{1}+\left(a^{2}+1\right)w_{2}+w_{3}+a^{2}w_{4}+w_{5}+w_{6}+\left(a^{2}+a\right)w_{7}+\left(a^{2}+1\right)w_{9}$& 
$\left(a+1\right)w_{0}+\left(a^{2}+a\right)w_{8}$ \\ \hline

$w_{7}$& 
$w_{8}$& 
$w_{9}$& 
$0$ \\ \hline

$\left(a^{2}+a\right)w_{0}+\left(a^{2}+a\right)w_{1}+\left(a^{2}+1\right)w_{2}+\left(a^{2}+a\right)w_{3}+\left(a+1\right)w_{4}+\left(a+1\right)w_{5}+\left(a^{2}+a+1\right)w_{6}+a^{2}w_{7}+w_{8}$& 
$\left(a+1\right)w_{0}+\left(a^{2}+1\right)w_{1}+\left(a^{2}+a+1\right)w_{2}+\left(a^{2}+1\right)w_{3}+a\,w_{4}+\left(a^{2}+a\right)w_{5}+\left(a^{2}+1\right)w_{6}+\left(a+1\right)w_{7}+\left(a+1\right)w_{9}$& 
$a^{2}w_{3}$& 
$a^{2}w_{1}$ \\ \hline

$a^{2}w_{0}+\left(a^{2}+a+1\right)w_{2}+\left(a^{2}+a\right)w_{3}+a\,w_{4}+a^{2}w_{5}+\left(a^{2}+1\right)w_{6}+\left(a^{2}+a+1\right)w_{8}+w_{9}$& 
$a^{2}w_{0}$& 
$a^{2}w_{1}+a^{2}w_{5}$& 
$a^{2}w_{8}$ \\ \hline

$0$& 
$\left(a^{2}+1\right)w_{0}+\left(a^{2}+a\right)w_{1}+w_{2}+\left(a^{2}+a+1\right)w_{4}+\left(a^{2}+1\right)w_{5}+\left(a^{2}+1\right)w_{6}+\left(a+1\right)w_{7}+w_{8}+w_{9}$& 
$w_{0}+a^{2}w_{1}+\left(a^{2}+a+1\right)w_{2}+\left(a+1\right)w_{4}+\left(a^{2}+a\right)w_{5}+w_{6}+a^{2}w_{7}+w_{8}+a\,w_{9}$& 
$\left(a+1\right)w_{0}+a^{2}w_{1}+\left(a+1\right)w_{6}+\left(a^{2}+a\right)w_{7}+\left(a+1\right)w_{8}+a\,w_{9}$

  \end{array}\right)$}

\bibliography{genre16}

\end{document}